\newtheorem{definition}{Definition}
\theoremstyle{remark}
\newtheorem{remark}{Remark}
\theoremstyle{theorem}
\newtheorem{theorem}{Theorem}
\theoremstyle{theorem}
\newtheorem{lemma}{Lemma}
\newtheorem{corollary}{Corollary}
\newtheorem{assumption}{Assumption}
\newcommand{\p}{\bar{p}}
\renewcommand{\p}{\textbf}
\newcommand\norm[1]{\left\lVert#1\right\rVert}
\begin{document}
	
	\title{Structure-preserving Discretization of the Hessian Complex based on Spline Spaces}
	
	\author{Jeremias Arf and Bernd Simeon \thanks{E-mail: \texttt{\{arf, simeon\}@mathematik.uni-kl.de, \noindent \newline \noindent \ \ \emph{2020  Mathematics Subject Classification. \ 65N30; 65N12} , \\ \emph{Keywords.} \textup{Finite Element Exterior Calculus, B-splines, Hodge-Laplacian, Hilbert complexes, structure-preservation. }}}}	   
	    \affil{TU Kaiserslautern, Germany}
	   
%	\keywords{dfsdf}
%	\classification{df}
  %  \thanks{text}
	\date{}
    \maketitle

    \begin{abstract}
    	We want to propose a new discretization ansatz for the second order  Hessian complex exploiting benefits of isogeometric analysis, namely the possibility of high-order convergence and smoothness of test functions. Although our approach is firstly only valid in domains that are obtained by affine linear transformations of a unit cube, we see in the approach a relatively simple way to obtain inf-sup stable and arbitrary fast convergent methods for the underlying Hodge-Laplacians. Background for this is the theory of Finite Element Exterior Calculus (FEEC) which guides us to structure-preserving discrete sub-complexes.  
    \end{abstract}
\section{Introduction}	

The Hessian complex is a so-called Hilbert complex that pops up in different fields like numerical relativity (\cite{QuennevilleBlair2015ANA}) or also as underlying complex for a biharmonic problem; see \cite{Pauly2016OnCA}. A more famous Hilbert complex in numerical mathematics and physics is the first order de Rham complex

		\begin{tikzpicture}
				\node at (-2,1.4) {$\mathbb{R}$};
		\node at (0,1.4) {$H^1(\Omega)$};
		
		\node at (3,1.4) {$\p{H}(\Omega,\textup{curl})$};

		\node at (6.5,1.4) {$\p{H}(\Omega,\textup{div})$};

		\node at (9.5,1.4) {$L^2(\Omega)$};

		\node at (1.3,1.7) {$\nabla$};
		\node at (-1.3,1.65) {$\subset$};
		
		\node at (4.7,1.7) {$\nabla \times $};

		\node at (8.2,1.7) {$\nabla \cdot $};

		\draw[->] (0.8,1.4) to (1.8,1.4);
		
		\draw[->] (-1.7,1.4) to (-0.7,1.4);
		\draw[->] (4.2,1.4) to (5.3,1.4);
		
		\draw[->] (7.7,1.4) to (8.75,1.4);
		\draw[->] (10.2,1.4) to (10.9,1.4);
		\node at (10.55,1.7) {$0$};

		\node at (11.45,1.4) {$\{0\} \ ,$};
	
		%	\draw[->] (9,1) to (9,0);
		\end{tikzpicture}
	\noindent \\
 which  can be used for problems in electromagnetics; see \cite{Buffa2011IsogeometricDD}. It  has  a connection to Maxwell's equations; compare \cite[section 8.6]{ArnoldBook}. An elegant way of  discretizing latter Hilbert complex by setting up a finite-dimensional subcomplex can be achieved through the theory of FEEC developed mainly by  Arnold,  Falk and  Winther; see e.g. \cite{Arnold2010FiniteEE}. One of the basic ideas behind FEEC is the usage of test function spaces which are compatible with the complex in the sense that we have projections onto the finite-dimensional spaces that commute with the differential operators. In his book \cite{ArnoldBook} Arnold introduces in detail how one can construct Finite Element (FEM) spaces fulfilling the commutation and other properties and in what way they lead to stable and convergent numerical variational formulations for different kind of equations, e.g. the Hodge-Laplacians. Underlying are the spaces of polynomial differential forms. 
Fortunately, the results of FEEC are  quite general and  the framework is applicable for every closed Hilbert complex and other types of discrete spaces. For example  Buffa et al.  presented in the paper  \cite{Buffa2011IsogeometricDD} a procedure of discretizing for the de Rham complex using spline spaces that satisfy  the main aspects of FEEC. \\
Here in this article we want to continue the idea of combining isogeometric analysis (IGA) and FEEC within the scope of numerical methods for the example of the Hessian complex. In other words we adapt the approach in \cite{Buffa2011IsogeometricDD} for the case of the Hessian complex and orient ourselves very closely towards latter reference.
%Besides we  want to mention that the ansatz used in this article should be also applicable for other second order complexes like the divdiv-complex.

A main reason for our studies is the search for a stable  and convergent  numerical method for the Linearized Einstein Bianchi System (LEBS) in numerical relativity.
For a derivation of the LEBS we refer the reader to the thesis \cite{QuennevilleBlair2015ANA} of Quenneville-B\'elair  and the references therein. Further, the author of \cite{QuennevilleBlair2015ANA}  uses the concept of FEEC for the numerical computation of solutions to the LEBS, too. Hence we use a similar mind walk since we are also looking for structure-preserving discretizations using the results of FEEC.  But  whereas Quenneville-B\'elair uses polynomial de Rham complexes, we exploit isogeometric analysis for the definition of test function spaces. Because of the possibility to increase the smoothness of splines easily we are able to discretize the original Hessian complex with its required $C^1$ regularity of the test functions due to fact  that $H^2$-Sobolev spaces are involved. Furthermore, as Quenneville-B\'elair pointed out in his thesis, the version of the LEB system as a part of the Hessian complex guarantees automatically some special features of the physics behind the equations. Namely, suitable symmetry and trace properties are fulfilled, or preserved, respectively. Thus  one of the outcomes of this article is the achievement of a stable high-order convergent method for the Hessian complex  that is feasible for an application in the context of numerical relativity. However, the needed restriction to affine linear parametrizations for the  proposed method demonstrates the meaningfulness of generalizations. Especially the study of the Hessian complex on  geometries with curved boundaries is of current interest for the authors.\\

We also want to mention that the idea of using splines for Hilbert complexes like presented in \cite{Buffa2011IsogeometricDD}  should also be applicable in the context of other tensor complexes, e.g. the \textup{elasticity}-complex; see \cite{Pauly2016OnCA} or \cite[Chapter 8]{ArnoldBook}.  \\

The paper is structured as follows. In Section 2 we introduce mathematical notation and basic notions in the context of isogeometric analysis as well as for Hilbert complexes. Afterwards, we define a discrete Hessian complex using splines. Then we face approximation estimates for quantifying the  goodness of the discretization. In Section 5 we introduce two application examples, namely the Hodge-Laplacian and the LEBS. In the last Section 6 we display some numerical tests for checking the convergence statements established before in Section 4.

\section{Mathematical preliminaries and notation}
\subsection{Mathematical notation}
In this section we introduce some notation and define several spaces. \\
Given some bounded Lipschitz domain $D \subset \mathbb{R}^3$ we write for the standard Sobolev spaces $H^0(D)=L^2(D),\ H^k(D), \ k \in \mathbb{N}$, where $L^2(D)$ stands for the Hilbert space of square-integrable functions endowed with the inner product $\langle \cdot , \cdot \rangle_{L^2(D)}$. The norms $(|\cdot|_{H^k(D)}),\ \norm{\cdot}_{H^k(D)}$ denote the classical Sobolev (semi-)norms in $H^k(D)$. % and $C^{\infty}_c(D)$ is the space of smooth functions with compact support in $D$.
 In case of vector- or matrix-valued functions we can define Sobolev spaces, too, by requiring the component functions to be in suitable Sobolev spaces. To distinguish latter case from the scalar-valued one, we use a bold-type notation. For example we have for $\boldsymbol{v} \coloneqq (v_1,\dots , v_d), \ \boldsymbol{v} \in \p{H}^k(D) \colon \Leftrightarrow \ v_i \in H^k(\Omega), \, \forall i  $ and $\p{M} \coloneqq \big(M_{ij}\big)_{i,j=1}^3, \ \p{M} \in \p{H}^k(D) \colon \Leftrightarrow \ M_{ij} \in H^k(\Omega), \, \forall i,j$ and define the norms
\begin{align*}
\norm{\boldsymbol{v}}_{\p{H}^k(D)}^2 \coloneqq \sum_i \norm{v_i}_{H^k(D)}^2, \ \ \ \  \ \ \ \norm{\p{M}}_{\p{H}^k(D)}^2 \coloneqq \sum_{i,j} \norm{M_{ij}}_{H^k(D)}^2.
\end{align*}
Analogously we can proceed in case of the semi-norms.
We note that the inner product $\langle \cdot , \cdot \rangle_{L^2(D)}$ introduces straightforwardly an inner product on $\p{L}^2(D)$. For the definition of the next  spaces and norms we follow partly \cite{Pauly2016OnCA} to introduce further notation. First, let us consider vector-valued mappings.  Then we set
\begin{alignat*}{3}
 \p{H}(D,\textup{curl}) &\coloneqq \{ \boldsymbol{v} \in \p{L}^2(D) \ | \ \nabla \times \boldsymbol{v} \in \p{L}^2(D)  \}, \  \hspace{0.3cm} && \norm{ \boldsymbol{v}}_{\p{H}(D,\textup{curl})}^2  \coloneqq \norm{\boldsymbol{v}}_{\p{L}^2(D)}^2 + \norm{\nabla \times \boldsymbol{v}}_{\p{L}^2(D)}^2,\\
 \p{H}(D,\textup{div}) &\coloneqq \{ \boldsymbol{v} \in \p{L}^2(D) \ | \ \nabla \cdot \boldsymbol{v} \in \p{L}^2(D)  \}, \   && \norm{ \boldsymbol{v}}_{\p{H}(D,\textup{div})}^2  \coloneqq \norm{\boldsymbol{v}}_{\p{L}^2(D)}^2 + \norm{\nabla \cdot \boldsymbol{v}}_{{L}^2(D)}^2.
\end{alignat*}
\normalsize
 Above we wrote $\nabla$  for the classical nabla operator and later will write $\nabla^2$ for the Hessian. The definitions for $\p{H}(D,\textup{curl}) , \ \p{H}(D,\textup{div}), $ can be generalized to the matrix setting by requiring that all the rows (as vector-valued mappings) are in the respective spaces. Here, the curl $\nabla \times$ and divergence $\nabla \cdot$ act row-wise, too. Furthermore, we denote the subspace of symmetric and traceless matrix-valued functions by
$$\p{L}^2(D,\mathbb{S}) \coloneqq \{ \p{M} \in \p{L}^2(D) \ | \ \p{M}^{T}=\p{M} \}, \ \ \ \ \  \p{L}^2(D,\mathbb{T}) \coloneqq \{ \p{M} \in \p{L}^2(D) \ | \  \textup{tr}(\p{M})=0 \},$$ and set
$$ \p{H}(D,\textup{curl}, \mathbb{S}) \coloneqq  \p{H}(D,\textup{curl}) \cap \p{L}^2(D,\mathbb{S}), \ \ \ \ \  \p{H}(D,\textup{div}, \mathbb{T}) \coloneqq  \p{H}(D,\textup{div}) \cap \p{L}^2(D,\mathbb{T}) .$$ 

Besides we define
\begin{alignat*}{3}
\small
\p{H}(D,\textup{symcurl},\mathbb{T}) &\coloneqq \{ \p{T} \in \p{L}^2(D,\mathbb{T}) \ | \ \nabla \times \p{T} + (\nabla \times \p{T})^T \in \p{L}^2(D) \}, \\
\p{H}(D,\textup{divdiv},\mathbb{S}) &\coloneqq \{ \p{S} \in \p{L}^2(D,\mathbb{S}) \ | \ \nabla \cdot \nabla \cdot \p{S} \in {L}^2(D) \},\\
\norm{ \p{M}}_{\p{H}(D,\textup{divdiv})}^2  &\coloneqq \norm{\p{M}}_{\p{L}^2(D)}^2 + \norm{\nabla \cdot \nabla \cdot \p{M}}_{{L}^2(D)}^2,\\
\norm{ \p{M}}_{\p{H}(D,\textup{symcurl})}^2  &\coloneqq \norm{\p{M}}_{\p{L}^2(D)}^2 + \norm{ \nabla \times  \p{M} + (\nabla \times \p{M})^T}_{\p{L}^2(D)}^2.
\end{alignat*}
\normalsize
Then writing  $\p{C}_c^{\infty}(D)$ for the space of smooth compact supported vector-valued, matrix-valued respectively, functions, one can introduce some spaces with zero boundary  conditions in the sense

\begin{alignat*}{3}
 \p{H}_0^k(D) \coloneqq \overline{\p{C}_c^{\infty}(D)}^{\p{H}^k(D)}, \  { \overset{\circ}{\p{H}}(D,\textup{divdiv}, \mathbb{S})} \coloneqq \overline{\p{C}_c^{\infty}(D) \cap \p{L}^2(D,\mathbb{S})}^{\p{H}(D, \textup{divdiv})}, \  \\  \hspace{2cm} { \overset{\circ}{\p{H}}(D,\textup{symcurl}, \mathbb{T})} \coloneqq \overline{\p{C}_c^{\infty}(D) \cap \p{L}^2(D,\mathbb{T})}^{\p{H}(D, \textup{symcurl})},
\end{alignat*}
where we write $\overline{X}^{Y}$ for the closure of the space $X$ w.r.t. the norm $\norm{\cdot}_Y$.

Next we define some abbreviations. If we have arbitrary functions $t_j, \ s_i \colon D \rightarrow \mathbb{R}, \ i=1,\dots,6, \ j=1, \dots , 8$ and $m_l  \colon D \rightarrow \mathbb{R}, \ l = 1, \dots, 9$, we define the corresponding symmetric and traceless matrix functions through
\small
\begin{align*}
\textup{SYM}(s_1, \dots ,s_6 ) &\colon D  \rightarrow \mathbb{R}^{3 \times 3} , \ \ \textup{SYM}(s_1, \dots ,s_6 ) = \begin{pmatrix}
s_1 & s_2 & s_3 \\
s_2 & s_4 & s_5 \\
s_3 & s_5 & s_6
\end{pmatrix},\\
\textup{TR}(t_1, \dots ,t_8 ) &\colon D  \rightarrow \mathbb{R}^{3 \times 3} , \ \ \textup{TR}(t_1, \dots ,t_8 ) = \begin{pmatrix}
t_1 & t_2 & t_3 \\
t_4 & t_5-t_1 & t_6 \\
t_7 & t_8 & -t_5
\end{pmatrix},\\
\textup{MAT}(m_1, \dots ,m_9 ) &\colon D  \rightarrow \mathbb{R}^{3 \times 3} , \ \ \textup{MAT}(m_1, \dots ,m_9 ) = \begin{pmatrix}
m_1 & m_2 & m_3 \\
m_4 & m_5 & m_6 \\
m_7 & m_8 & m_9
\end{pmatrix}.
\end{align*}
\normalsize
For  operators  $o_i \colon X \rightarrow Y, \ i = 1,\dots , 9$  we can define analogously   the matrix operators, for example \small
$$\textup{MAT}(o_1,\dots,o_9 ) \colon X^{3 \times 3}  \rightarrow  Y^{3 \times 3}, \ \ \textup{MAT}(o_1, \dots ,o_9 )(m_1,\dots,m_9) = \small \begin{pmatrix}
o_1(m_1) & o_2(m_2) & o_3(m_3) \\
o_4(m_4) & o_5(m_5) & o_6(m_6) \\
o_7(m_7) & o_8(m_8) & o_9(m_9)
\end{pmatrix}.$$ \normalsize
Further, for a matrix $\p{M}=(M_{ij})$ we use an upper index $M^j$ to denote the j-th column and a lower index $M_i$ for the $i$-th row. Then, for matrix-valued mapping, we write for the deviatoric gradient \textup{dev}$\nabla$ and the symmetric curl operator $\textup{sym}\nabla \times $. 

From functional analysis we know the Hilbert space adjoint for a densely-defined linear operator $T \colon W^1 \rightarrow W^2, \ \ W^1 , W^2$ Hilbert spaces. It is the  linear mapping $T^* \colon W^2 \rightarrow W^1$ for which $ \langle T x , y \rangle_{W^2} = \langle  x , T^*y \rangle_{W^1}, \ \forall x \in D(T),\ y \in D(T^*) $, where the angle brackets stand for the inner product and $D(L)$ denotes the domain of some operator $L$.\\
After stating some basic notation we proceed  with the consideration of Hilbert complexes, B-splines and spaces involving splines.

\subsection{Hilbert complexes}
The following definitions and explanations are based on the references \cite{ArnoldBook,Pauly2016OnCA}.\\
A \emph{Hilbert complex} is a chain of Hilbert spaces $(W^k,\langle \cdot , \cdot \rangle_{W^k})$ together with closed and densely defined linear  operators
$d^k \colon W^k \rightarrow W^{k+1}$, where one requires  the range  $\mathcal{R}(d^k)$ of $d^k$  to be a subset of the nullspace $\mathcal{N}(d^{k+1})$ of $d^{k+1}$. Hence $d^{k+1} \circ d^k = 0$.
For our purposes we are mainly interested in the so-called  \emph{domain complex} $(V^k,d^k)$, where the Hilbert spaces $W^k$ are replaced by the dense domains $V^k$ of the operators $d^k$, i.e. we have a sequence %Such complex is often represented by diagrams like
	 \begin{figure}[h]
	\begin{center}
		\begin{tikzpicture}
		\node at (0.2,1.7) {$d^{k-2}$};
		\draw[->] (-0.3,1.4) to (0.8,1.4);
     	\node at (-0.8,1.4) {$\dots$};
		
		\node at (1.5,1.4) {$V^{k-1}$};
		
		\node at (4,1.4) {$V^{k}$};

		\node at (6.5,1.4) {$V^{k+1}$};

		\node at (8.7,1.4) {$\dots \ \ \ .$};

		\node at (2.8,1.7) {$d^{k-1}$};

		\node at (5.1,1.7) {$d^{k} $};

		\node at (7.7,1.7) {$d^{k+1} $};

		\draw[->] (2.2,1.4) to (3.3,1.4);

		\draw[->] (4.5,1.4) to (5.6,1.4);
		
		\draw[->] (7,1.4) to (8.1,1.4);
		%	\draw[->] (9,1) to (9,0);
		\end{tikzpicture}
	\end{center}
\end{figure}

 Using the graph inner product with induced graph norm $ \norm{v }_{V^k}^2 \coloneqq \norm{v}_{W^k}^2+ \norm{d^k v}_{W^{k+1}}^2,$ we obtain with $(V^k,\langle \cdot , \cdot \rangle_{V^k})$ again Hilbert spaces and thus the domain complex is indeed a Hilbert complex. We call a Hilbert complex \emph{closed} if the ranges $\mathcal{R}(d^k) $ are closed in $V^{k+1}$ and we denote the domain complex \emph{exact}, if $\mathcal{R}(d^k)=\mathcal{N}(d^{k+1})$. Another important notion is the \emph{dual complex} which is built up by means of the adjoint operators $d_k^* \coloneqq (d^{k-1})^*$. More precisely, the dual complex of the domain complex has the form 
 
 \hspace{2cm}
		\begin{tikzpicture}
		\node at (0.3,1.7) {$d_{k-1}^{*}$};
		\draw[<-] (-0.3,1.4) to (0.8,1.4);
		\node at (-0.8,1.4) {$\dots$};
		
		\node at (1.5,1.5) {${V_{k-1}^*}$};
		
		\node at (4,1.5) {${V_k^*}$};

		\node at (6.4,1.5) {${V^*_{k+1}}$};

		\node at (8.7,1.4) {$ \ \ \ \ \dots \ \ \ \ ,$};

		\node at (2.8,1.7) {$d_{k}^*$};

		\node at (5.1,1.7) {$d_{k+1}^* $};

		\node at (7.65,1.7) {$d_{k+2}^{*} $};

		\draw[<-] (2.2,1.4) to (3.3,1.4);

		\draw[<-] (4.5,1.4) to (5.6,1.4);
		
		\draw[<-] (7,1.4) to (8.1,1.4);
		%	\draw[->] (9,1) to (9,0);
		\end{tikzpicture}
		\newline where the  $V_k^*$ indicate the domains of the adjoint operators.   \\
Now we tend to the Hessian complex on which we focus in this article. It is the domain complex

\begin{definition}{(Hessian complex)}\\ \label{definition:Hessian}
	\begin{tikzpicture}
	\node at (0,1.4) {$H^2(\Omega)$};
	
	\node at (3,1.4) {$\p{H}(\Omega,\textup{curl}, \mathbb{S})$};

	\node at (6.5,1.4) {$\p{H}(\Omega,\textup{div}, \mathbb{T})$};

	\node at (9.5,1.4) {$\p{L}^2(\Omega)$};

	\node at (1.3,1.7) {$\nabla^2$};

	\node at (4.7,1.7) {$\nabla \times $};

	\node at (8.2,1.7) {$\nabla \cdot $};

	\draw[->] (0.8,1.4) to (1.8,1.4);

	\draw[->] (4.2,1.4) to (5.3,1.4);
	
	\draw[->] (7.7,1.4) to (8.75,1.4);
	\draw[->] (10.2,1.4) to (10.9,1.4);
	\draw[->] (-1.6,1.4) to (-0.72,1.4);
	\node at (10.55,1.7) {$0$};
	\node at (-1.09,1.65) {$\iota_{P_1}$};
	\node at (11.45,1.4) {$\{0\} , $};
	\node at (-2.25,1.4) {$P_1(\Omega)$};
	%	\draw[->] (9,1) to (9,0);
	\end{tikzpicture} 
\end{definition}
\noindent
derived from the Hilbert complex \newline
\noindent
\begin{tikzpicture}
		\node at (0,1.4) {$L^2(\Omega)$};
		
		\node at (3,1.4) {$\p{L}^2(\Omega, \mathbb{S})$};

		\node at (6.5,1.4) {$\p{L}^2(\Omega, \mathbb{T})$};

		\node at (9.5,1.4) {$\p{L}^2(\Omega)$};

		\node at (1.3,1.7) {$\nabla^2$};

		\node at (4.7,1.7) {$\nabla \times $};

		\node at (8.2,1.7) {$\nabla \cdot $};

		\draw[->] (0.8,1.4) to (1.8,1.4);

		\draw[->] (4.2,1.4) to (5.3,1.4);
		
		\draw[->] (7.7,1.4) to (8.75,1.4);
		\draw[->] (10.2,1.4) to (10.9,1.4);
		\draw[->] (-1.6,1.4) to (-0.72,1.4);
		\node at (10.55,1.7) {$0$};
		\node at (-1.09,1.65) {$\iota_{P_1}$};
		\node at (11.45,1.4) {$\{0\} \ .$};
		\node at (-2.25,1.4) {$P_1(\Omega)$};
		%	\draw[->] (9,1) to (9,0);
		\end{tikzpicture} \newline
Above the $\iota_{P_1}$ stands for the inclusion map, $P_1(\Omega)$ denotes the  linear polynomial space and $\Omega \subset \mathbb{R}^3$. Further we make here the assumption:
\begin{assumption}
	 $\Omega$ is a bounded and simply connected  Lipschitz domain with connected boundary.
\end{assumption}
\noindent
And the dual complex has the form \newline
\noindent
\small
		\begin{tikzpicture}
		\node at (0,1.4) {$L^2(\Omega)$};
		
		\node at (2.9,1.5) {${ \overset{\circ}{\p{H}}(\Omega,\textup{divdiv}, \mathbb{S})}$};

		\node at (6.7,1.5) {${\overset{\circ}{\p{H}}(\Omega,\textup{symcurl}, \mathbb{T})}$};

		\node at (9.9,1.4) {${\p{H}}_0^1(\Omega)$};

		\node at (1.2,1.7) {$(\nabla \cdot)^2$};

		\node at (4.7,1.7) { \small $ \textup{sym}\nabla \times $};

		\node at (8.65,1.7) {\small $ -\textup{dev}\nabla $};

		\draw[<-] (0.6,1.4) to (1.6,1.4);

		\draw[<-] (4.3,1.4) to (5.25,1.4);
		
		\draw[<-] (8.3,1.4) to (9.05,1.4);
		\draw[<-] (10.6,1.4) to (11.3,1.4);
		\draw[<-] (-1.6,1.4) to (-0.72,1.4);
		\node at (10.95,1.7) {$0$};
		\node at (-1.09,1.65) {$\pi_{P_1}$};
		\node at (11.75,1.4) {$\{0\} \ ,$};
		\node at (-2.25,1.4) {$P_1(\Omega)$};
		%	\draw[->] (9,1) to (9,0);
		\end{tikzpicture}
\normalsize \noindent \\
 where the circles should indicate that the domains of the dual operators are subspaces of
 \small
 \normalsize
  $\p{L}^2(\Omega)$ with suitable zero boundary conditions.
  Further, we wrote $\pi_{P_1}$ for the $L^2$-orthogonal  projection onto the  linear polynomial space.
  
 One can show that the mentioned Hessian complex is a closed and exact complex. For a proof of the exactness we refer to Theorem 3.3 in \cite{Chen2020DiscreteHC}. And since the exactness also implies that the Hessian sequence is closed ( cf. Theorem 3.8 or  section 4.1 in \cite{ArnoldBook} ) we will be able to set up a discrete Hessian complex based on spline spaces to approximate several PDEs.  To have the mathematical notation available for defining such a finite-dimensional version of the  complex we face some very basic definitions from the field of isogeometric analysis in the next section.

\subsection{Spline spaces}
\label{section:splines}
Here, we state a short overview of B-spline functions, spaces respectively, and some basic results in the univariate as well as in the multivariate case. \\
 Following \cite{IGA1,IGA3} for a brief
exposition, we call an  increasing sequence of real numbers $\Xi \coloneqq \{ \xi_1 \leq  \xi_2  \leq \dots \leq \xi_{n+p+1}  \}$ for some $p \in \mathbb{N}$   \emph{knot vector}, where we assume  $0=\xi_1=\xi_2=\dots=\xi_{p+1}, \ \xi_{n+1}=\xi_{n+2}=\dots=\xi_{n+p+1}=1$, and call such knot vectors $p$-open. 
Furthermore, the multiplicity of the $j$-th knot is denoted by $m(\xi_j)$.
Then  the univariate B-spline functions $\widehat{B}_{j,p}(\cdot)$ of degree $p$ corresponding to a given knot vector $\Xi$ are defined recursively by the \emph{Cox-DeBoor formula}  :
\begin{align*}
\widehat{B}_{j,0}(\zeta) \coloneqq \begin{cases}
1, \ \ \textup{if}  \ \zeta \in [\xi_{j},\xi_{j+1}) \\
0, \ \ \textup{else},
\end{cases}
\end{align*}
\textup{and if }  $p \in \mathbb{N}_{\geq 1} \ \textup{we set}$ 
\begin{align*}
\widehat{B}_{j,p}(\zeta)\coloneqq \frac{\zeta-\xi_{j}}{\xi_{j+p}-\xi_j} \widehat{B}_{j,p-1}(\zeta)  +\frac{\xi_{j+p+1}-\zeta}{\xi_{j+p+1}-\xi_{j+1}} \widehat{B}_{j+1,p-1}(\zeta),
\end{align*}
where one puts $0/0=0$ to obtain  well-definedness. The knot vector $\Xi$ without knot repetitions is denoted by $\{ \psi_1, \dots , \psi_m \}$. \\
 The multivariate extension of the last spline definition is achieved by a tensor product construction. In other words, we set for a given  knot vector   $\boldsymbol{\Xi} \coloneqq \Xi_1 \times   \dots \times \Xi_d $, where the $\Xi_{l}=\{ \xi_1^{l}, \dots , \xi_{n_l+p_l+1}^{l} \}, \ l=1, \dots , d$ are $p_l$-open,   and a given \emph{degree vector}   $\p{p} \coloneqq (p_1, \dots , p_d)$ for the multivariate case
\begin{align*}
\widehat{B}_{\p{i},\p{p}}(\boldsymbol{\zeta}) \coloneqq \prod_{l=1}^{d} \widehat{B}_{i_l,p_l}(\zeta_l), \ \ \ \ \forall \, \p{i} \in \mathit{\mathbf{I}}, \ \  \boldsymbol{\zeta} \coloneqq (\zeta_1, \dots , \zeta_d),
\end{align*}
	with  $d$ as  the underlying dimension of the parametric domain $\widehat{\Omega}= (0,1)^d$ and $\textup{\p{I}}$ the multi-index set $\textup{\p{I}} \coloneqq \{ (i_1,\dots,i_d) \  | \  1\leq i_l \leq n_l, \ l=1,\dots,d  \}$.\\
	 	B-splines  fulfill several properties and for our purposes the most important ones are:
	\begin{itemize}
		\item If  for all internal knots the multiplicity satisfies $1 \leq m(\xi_j) \leq m \leq p , $ then the B-spline basis functions $\widehat{B}_{i,p}({\xi})$ are globally $C^{p-m}$-continuous. Therefore we define  in this case the regularity integer $r \coloneqq p-m$. Obviously, by the product structure, we get splines $\widehat{B}_{\p{i},\p{p}}$ which are $C^{r_l}$-smooth  w.r.t. the $l$-th coordinate direction if the internal multiplicities fulfill $1 \leq m(\xi_j^l) \leq m_l  \leq p_l, \ r_l \coloneqq p_l-r_l, \ \forall l $ in the multivariate case.   We write in the following $\p{r} \coloneqq (r_1,\dots ,r_d), $ for the regularity vector to indicate the smoothness. In case of $r_i <0$ we have discontinuous splines w.r.t. the $i$-th coordinate direction. 
		\item The B-splines $ \{\widehat{B}_{\p{i},\p{p}} \ | \ \ \p{i} \in \p{I} \}$ are linearly independent.
		\item For univariate splines $\widehat{B}_{i,p}, \ p \geq 1$ we have
		\begin{align}
		\label{eq:soline_der}
		\partial_{\zeta} \widehat{B}_{i,p}(\zeta) = \frac{p}{\xi_{i+p}-\xi_i}\widehat{B}_{i,p-1}(\zeta) -  \frac{p}{\xi_{i+p+1}-\xi_{i+1}}\widehat{B}_{i+1,p-1}(\zeta),
		\end{align} 
		with $\widehat{B}_{1,p-1}(\zeta)\coloneqq \widehat{B}_{n+1,p-1}(\zeta) \coloneqq 0$.
		\item  The support of the spline $\widehat{B}_{i,p} $ is a subset of  the interval $[\xi_i,\xi_{i+p+1}]$. Moreover,  the knots $\psi_j$ define a subdivision of the interval $(0,1)$ and for each element $I = (\psi_j,\psi_{j+1})$ we find an $i$ with $(\psi_j,\psi_{j+1})= (\xi_i,\xi_{i+1})$ and write $\tilde{I} \coloneqq  (\xi_{i-p},\xi_{i+p+1})$ for the so-called support extension.
	\end{itemize}
    The space spanned by all univariate splines $\widehat{B}_{i,p}$ corresponding to  given knot vector and degree $p$ and global regularity $r$  is denoted by $$S_p^r \coloneqq \textup{span}\{ \widehat{B}_{i,p} \ | \ i = 1,\dots , n \}.$$
    For the multivariate case we just define the spline space as the product space $$S_{p_1, \dots , p_d}^{r_1,\dots,r_d} \coloneqq S_{p_1}^{r_1} \otimes \dots \otimes S_{p_d}^{r_d} = \textup{span} \{\widehat{B}_{\p{i},\p{p}} \ | \  \p{i} \in \mathit{\mathbf{I}}  \}$$ of proper univariate spline spaces.\\
    To define discrete spaces based on splines we require a parametrization mapping $\p{F} \colon \widehat{\Omega} \coloneqq (0,1)^d \rightarrow \mathbb{R}^d$ which parametrizes the computational domain. In fact we will assume in the subsequent parts that $\p{F}$ is an affine linear map and hence smooth.
    	The knots stored in the knot vector $  \boldsymbol{\Xi} $, corresponding to  the underlying  splines, determine a mesh in the parametric domain $\widehat{\Omega} $, namely  $\widehat{M} \coloneqq \{ K_{\p{j}}\coloneqq (\psi_{j_1}^1,\psi_{j_1+1}^1 ) \times \dots \times (\psi_{j_{d}}^{d},\psi_{j_{d}+1}^{d} ) \ | \  \p{j}=(j_1,\dots,j_{d}), \ \textup{with} \ 1 \leq j_i <m_i\},$ and
    with ${\boldsymbol{\Psi}}= \{\psi_1^1, \dots, \psi _{m_1}^1\}  \times \dots \times \{\psi_1^{d}, \dots, \psi _{m_{d}}^{d}\}$  \  \textup{as  the knot vector} \ ${\boldsymbol{\Xi}}$ \ 
    \textup{without knot repetitions}.   
    The image of this mesh under the mapping $\p{F}$, i.e. $\mathcal{M} \coloneqq \{{\p{F}}(K) \ | \ K \in \widehat{M} \}$, gives us a mesh structure in the physical domain. By inserting knots without changing the parametrization  we can refine the mesh, which is the concept of $h$-refinement; see \cite{IGA2,IGA1,IGA3}.
    For a mesh $\mathcal{M}$ we define the global mesh size $h \coloneqq \max\{h_{\mathcal{K}} \ | \ \mathcal{K} \in \mathcal{M} \}$, where for $\mathcal{K} \in \mathcal{M}$ we denote with $h_{\mathcal{K}} \coloneqq \textup{diam}(\mathcal{K}) $ the \emph{element size}.
    \begin{assumption}{(Regular mesh)}\\
    	There exists a constant $c_u $ independent from the mesh size such that $h_{\mathcal{K}} \leq h \leq c_u \, h_{\mathcal{K}}$ for all mesh elements $\mathcal{K} \in \mathcal{M}$. 
    \end{assumption}
After the introduction of elementary notions we face now the spline-based discretization for the Hessian complex.
\section{Structure-preserving discretization for the Hessian complex}
\label{section_discretization}
Here we consider a Lipschitz domain $\Omega \subset \mathbb{R}^3$ as computational domain for the Hessian complex defined in Section 2.2 (Def. \ref{definition:Hessian}).
 As already mentioned we assume the parametrization mapping to be affine linear, i.e.  $\p{F} \colon (0,1)^3 \rightarrow \Omega  \ , \ \boldsymbol{\zeta} \mapsto \p{A} \cdot \boldsymbol{\zeta} + \p{b} , \ \p{A} \in \mathbb{R}^{3\times 3} \ \textup{invertible}, \ \p{b} \in \mathbb{R}^3 $.
 
 Aim of this section is the establishment of  discrete spaces for the Hessian complex  which fulfill the structure-preserving properties  of the FEEC framework; see  \cite[Section 5.2.2]{ArnoldBook}.\\ 
 To construct an appropriate discretization we follow the approach in \cite{Buffa2011IsogeometricDD} for the case of the de Rham complex. In fact, most of the proofs and results are very similar to the ones in the latter reference, but are  stated for reasons of completeness and since there arise differences in some points. We have to be careful mainly due to the fact that we consider a second-order complex in contrast to the first order sequence in \cite{Buffa2011IsogeometricDD}. \\
 First, we define three mappings which connect the spaces in the reference cube, i.e. in the parametric domain $\widehat{\Omega}$, with the spaces in the physical domain $\Omega$. More precisely, we have the diagram 
	 \begin{figure}[h]
	\begin{center}
		\begin{tikzpicture}
		\node at (0,1.4) {$H^2(\Omega)$};
		
		\node at (3,1.4) {$\p{H}(\Omega,\textup{curl}, \mathbb{S})$};

		\node at (6.5,1.4) {$\p{H}(\Omega,\textup{div}, \mathbb{T})$};

		\node at (9.5,1.4) {$\p{L}^2(\Omega)$};

		\node at (1.3,1.7) {$\nabla^2$};

		\node at (4.7,1.7) {$\nabla \times $};

		\node at (8.2,1.7) {$\nabla \cdot $};

		\draw[->] (0.8,1.4) to (1.8,1.4);

		\draw[->] (4.2,1.4) to (5.3,1.4);
		
		\draw[->] (7.7,1.4) to (8.75,1.4);
		\draw[->] (10.2,1.4) to (10.9,1.4);
		\draw[->] (-1.6,1.4) to (-0.72,1.4);
		\node at (10.55,1.7) {$0$};
		\node at (-1.09,1.65) {$\iota_{P_1}$};
		\node at (11.45,1.4) {$\{0\}$};
		\node at (-2.25,1.4) {$P_1(\Omega)$};

	 \draw[->] (3,1.1) -- (3,0.2);
	  \draw[->] (-2.25,1.1) -- (-2.25,0.2);
	   \draw[->] (11.45,1.1) -- (11.45,0.2);
	    \draw[->] (0,1.1) -- (0,0.2);
	     \draw[->] (6.5,1.1) -- (6.5,0.2);
	      \draw[->] (9.5,1.1) -- (9.5,0.2);

			\node at (0,-0.1) {$H^2(\widehat{\Omega})$};
		
		\node at (3,-0.1) {$\p{H}(\widehat{\Omega},\textup{curl}, \mathbb{S})$};

		\node at (6.5,-0.1) {$\p{H}(\widehat{\Omega},\textup{div}, \mathbb{T})$};

		\node at (9.5,-0.1) {$\p{L}^2(\widehat{\Omega})$};

		\node at (1.3,0.2) {$\widehat{\nabla}^2$};

		\node at (4.7,0.2) {$\widehat{\nabla} \times $};

		\node at (8.2,0.2) {$\widehat{\nabla} \cdot $};

		\draw[->] (0.8,-0.1) to (1.8,-0.1);

		\draw[->] (4.2,-0.1) to (5.3,-0.1);
		
		\draw[->] (7.7,-0.1) to (8.75,-0.1);
		\draw[->] (10.2,-0.1) to (10.9,-0.1);
		\draw[->] (-1.6,-0.1) to (-0.72,-0.1);
		\node at (10.55,0.2) {$0$};
		\node at (-1.09,0.15) {$\iota_{P_1}$};
		\node at (11.57,-0.1) {$\{0\} \ ,$};
		\node at (-2.25,-0.1) {$P_1(\widehat{\Omega})$};

			\node[left] at (-2.2,0.7) { \small $\mathcal{Y}_1$};	
		\node[left] at (0.05,0.7) { \small $\mathcal{Y}_1$};
		\node[left] at (3.05,0.7) { \small $\mathcal{Y}_2$};
		\node[left] at (6.55,0.7) { \small $\mathcal{Y}_3$};
		\node[left] at (9.55,0.7) { \small $\mathcal{Y}_4$};
		\node[left] at (11.45,0.7) { \small $\textup{id}$};
		%	\draw[->] (9,1) to (9,0);
		\end{tikzpicture}
	%	\caption{We connect the original complex with the complex in the reference domain.}
	%	\label{fig:comuting diagram}
	\end{center}
\end{figure} \\ \noindent
 where 
 \begin{equation}
 \begin{aligned}
 \label{eq:trans_mappings}
 \mathcal{Y}_1  (\phi) &\coloneqq \phi \circ \p{F} ,  \\
  \mathcal{Y}_2  (\p{S}) &\coloneqq  \p{J}^T (\p{S} \circ \p{F}) \ \p{J},  \\
   \mathcal{Y}_3  (\p{T}) &\coloneqq  \textup{det}(\p{J}) \p{J}^T  (\p{T} \circ \p{F})  \p{J}^{-T},  \\
     \mathcal{Y}_4  (\boldsymbol{v}) &\coloneqq  \textup{det}(\p{J}) \p{J}^T  (\boldsymbol{v} \circ \p{F}) ,  
 \end{aligned}
 \end{equation}
with $\widehat{\nabla}$ denoting the nabla operator w.r.t. to the parametric coordinates and $(J_{ij}) \coloneqq \p{J} \coloneqq D\p{F} $ denoting the Jacobian of the parametrization mapping.
The above transformations $\mathcal{Y}_i$ are compatible with the Hessian complex as follows.
 \begin{lemma}
 	\label{lemma:1}
 	The latter diagram  commutes, i.e. $$ \widehat{\nabla}^2 \circ \mathcal{Y}_1 = \mathcal{Y}_2  \circ \nabla^2, \  \ \ \ \ (\widehat{\nabla} \times)  \circ \mathcal{Y}_2= \mathcal{Y}_3 \circ (\nabla \times) , \ \ \ \ \ \ (\widehat{\nabla} \cdot)  \circ \mathcal{Y}_3 = \mathcal{Y}_4  \circ (\nabla \cdot)  \ \ \ \ . $$
 \end{lemma}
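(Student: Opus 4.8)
The plan is to exploit that $\p{F}$ is affine linear, so that the Jacobian $\p{J}=D\p{F}=\p{A}$ is a \emph{constant} invertible matrix and $\det(\p{J})$ a nonzero constant; in particular $\widehat{\nabla}$ commutes with left and right multiplication by each of $\p{J},\p{J}^{-1},\p{J}^{T},\det(\p{J})$, and no derivative-of-$\p{J}$ terms will ever appear. This is exactly what turns all three identities into the chain rule plus index bookkeeping.

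For the first identity I would differentiate $u\coloneqq\mathcal{Y}_1(\phi)=\phi\circ\p{F}$ twice. The chain rule gives $\partial_{\zeta_a}u=\sum_i(\partial_i\phi\circ\p{F})\,J_{ia}$, and a second differentiation (the $J_{ia}$ being constants) produces $\partial_{\zeta_b}\partial_{\zeta_a}u=\sum_{i,j}J_{jb}\,\big((\nabla^2\phi)_{ji}\circ\p{F}\big)\,J_{ia}$, i.e.\ in matrix form $\widehat{\nabla}^2u=\p{J}^{T}(\nabla^2\phi\circ\p{F})\,\p{J}=\mathcal{Y}_2(\nabla^2\phi)$. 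Since $\nabla^2\phi$ is symmetric so is the right-hand side, so the image indeed lands in $\p{H}(\widehat{\Omega},\textup{curl},\mathbb{S})$.

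For the two remaining identities I would reduce everything to the classical transformation behaviour of the de Rham complex under the covariant and contravariant Piola transforms. Writing $\boldsymbol{m}_k$ for the $k$-th row of a matrix field $\p{M}$ read as a vector field, the row-wise operators are $(\nabla\times\p{M})_k=\nabla\times\boldsymbol{m}_k$ and $(\nabla\cdot\p{M})_k=\nabla\cdot\boldsymbol{m}_k$, and a short index computation shows that the matrix conjugations in $\mathcal{Y}_2,\mathcal{Y}_3$ decouple row-by-row: the $i$-th row of $\mathcal{Y}_2(\p{S})$ equals $\sum_k J_{ki}\,\p{J}^{T}(\boldsymbol{s}_k\circ\p{F})$, a constant-coefficient combination of covariant Piola transforms of the rows of $\p{S}$, whereas the $i$-th row of $\mathcal{Y}_3(\p{T})$ equals $\sum_k J_{ki}\,\det(\p{J})\,\p{J}^{-1}(\boldsymbol{t}_k\circ\p{F})$, a combination of contravariant Piola transforms of the rows of $\p{T}$; and componentwise $\big(\mathcal{Y}_4(\boldsymbol{v})\big)_i=\det(\p{J})\sum_k J_{ki}(v_k\circ\p{F})$. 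Then I would invoke the standard affine transformation laws $\widehat{\nabla}\times\big(\p{J}^{T}(\boldsymbol{w}\circ\p{F})\big)=\det(\p{J})\,\p{J}^{-1}\big((\nabla\times\boldsymbol{w})\circ\p{F}\big)$ and $\widehat{\nabla}\cdot\big(\det(\p{J})\,\p{J}^{-1}(\boldsymbol{w}\circ\p{F})\big)=\det(\p{J})\big((\nabla\cdot\boldsymbol{w})\circ\p{F}\big)$ (cf.\ \cite{Buffa2011IsogeometricDD}), apply them row-by-row --- the constants $J_{ki}$ and $\det(\p{J})$ passing freely through $\widehat{\nabla}\times$ and $\widehat{\nabla}\cdot$ --- and read off $\big(\widehat{\nabla}\times\mathcal{Y}_2(\p{S})\big)_i=\det(\p{J})\sum_k J_{ki}\,\p{J}^{-1}\big((\nabla\times\p{S})_k\circ\p{F}\big)=\big(\mathcal{Y}_3(\nabla\times\p{S})\big)_i$ and likewise $\big(\widehat{\nabla}\cdot\mathcal{Y}_3(\p{T})\big)_i=\det(\p{J})\sum_k J_{ki}\big((\nabla\cdot\p{T})_k\circ\p{F}\big)=\big(\mathcal{Y}_4(\nabla\cdot\p{T})\big)_i$.

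I expect the only real obstacle to be notational: keeping the row/column indices straight in the conjugations $\p{J}^{T}(\cdot)\p{J}$ and $\p{J}^{T}(\cdot)\p{J}^{-T}$ and confirming that it is the \emph{rows} of $\p{S}$ and $\p{T}$ (not their columns) to which the de Rham Piola identities have to be applied, i.e.\ that the row-wise action of $\nabla\times$ and $\nabla\cdot$ is the one actually intertwined by that conjugation. Secondarily, one has to either cite or quickly re-derive the two de Rham transformation laws used above; because $\p{F}$ is affine these follow from a one-line computation (or from the fact that the pullback of differential forms commutes with the exterior derivative), so this is not a genuine difficulty.
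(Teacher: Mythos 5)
Your proposal is correct and follows essentially the same route as the paper: the affine chain rule for the Hessian (the $\widehat{\nabla}^2 F_k$ terms vanish because $\p{F}$ is affine) for the first identity, and a row-wise reduction to the covariant and contravariant Piola transformation laws for the second and third. The only cosmetic difference is that your row-based index bookkeeping never needs the symmetry of $\p{S}$, whereas the paper's intermediate step $C_i^T=\p{J}^T(\p{S}\circ\p{F})^i$ invokes symmetry to pass between rows and columns of $\p{S}$; both variants are sound.
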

\begin{proof}
		We show the three equations separately.
	\begin{itemize}
\item[1.] Let $F_k$ be the $k$-th component function of $\p{F}$. By the chain rule of the hessian operator (see \cite[Corollary 1]{skorski2019chain}) we obtain directly
		        \begin{align*}
		       \widehat{\nabla}^2 (\mathcal{Y}_1(\phi))=\widehat{\nabla}^2(\phi \circ \p{F})  &= \p{J}^T \big((\nabla^2\phi) \circ \p{F}\big)  \p{J} + \sum_k  (\widehat{\nabla}^2F_k)\partial_{k} \phi \\  &=\p{J}^T \big((\nabla^2\phi) \circ \p{F}\big)   \p{J} =  \mathcal{Y}_2 (\nabla^2\phi)   .
		        \end{align*}
              \item[2.] Here we assume that $\p{S}$ is a symmetric $3 \times 3$ matrix-valued function.
		        Then for the second equation in the assertion we first note the  relation\footnote{we use the well-known  generalized product rule $\nabla \times\big(a \ \boldsymbol{v} \big) = \nabla a \  \times \boldsymbol{v} + a \nabla \times \boldsymbol{v}$. }		        		        
		        \begin{align*}
		        \widehat{\nabla}\times \Big[ (a_1,a_2,a_3) \cdot \p{C}(\boldsymbol{\zeta})\Big] = \sum_i \big[  \widehat{\nabla} a_i \times C_i + a_i \ \widehat{\nabla} \times C_i \big],
		        \end{align*}
		        where $(a_1,a_2,a_3) \in \mathbb{R}^3$ is some row vector, $\p{C}(\boldsymbol{\zeta})$ a matrix-valued function and $C_i$ the $i$-th row of the matrix $\p{C}(\boldsymbol{\zeta})$. The last equation can be simplified to      \begin{align}
		        \label{eq:rot1}
		        \widehat{\nabla} \times \Big[ (a_1,a_2,a_3)  \cdot \p{C}(\boldsymbol{\zeta})\Big] = \sum_i \big[ a_i \ \widehat{\nabla} \times C_i \big].
		        \end{align}
		        Using \eqref{eq:rot1} and setting $\p{C} \coloneqq (\p{S} \circ \p{F}) \p{J}$ we can write   
		        \begin{align*}
		        \widehat{\nabla} \times \big[ \p{J}^T (\p{S} \circ \p{F}) \p{J} \big] &=   \widehat{\nabla} \times \Bigg[ \begin{pmatrix}
		        J_{11} & J_{21} & J_{31}\\
		        J_{12} & J_{22} & J_{32}\\
		        J_{13} & J_{23} & J_{33}
		        \end{pmatrix} \p{C} \Bigg]\\
		        &=\begin{pmatrix}
		       \widehat{\nabla} \times (  J^1 \cdot \p{C}) \\
		           \widehat{\nabla} \times (  J^2 \cdot \p{C})\\
		           \widehat{\nabla} \times (  J^3 \cdot \p{C})
		        \end{pmatrix}  =  \begin{pmatrix}
		        \sum_i  J_{i1} \ \widehat{\nabla} \times C_i \\
		     \sum_i  J_{i2} \ \widehat{\nabla} \times C_i\\
		        \sum_i  J_{i3} \ \widehat{\nabla} \times C_i
		        \end{pmatrix}.
		        \end{align*} 
		        Then the next application of the chain rule for the covariant Piola transfomation $\widehat{\nabla} \times (\p{J}^T  (\boldsymbol{v}\circ \p{F})) = \textup{det}(\p{J}) \, \p{J}^{-1}  ( \nabla \times \boldsymbol{v} ) \circ \p{F}$ (see e.g. 2.15 and 2.17 in \cite{Hiptmair2002FiniteEI})  yields\begin{align*}
		        \widehat{\nabla} \times C_i^T& = \widehat{\nabla} \times ((\p{S} \circ \p{F})_i \cdot \p{J})^T = \widehat{\nabla} \times ( \p{J}^T \cdot (\p{S} \circ \p{F})^i)  \\ & =\det(\p{J}) \p{J}^{-1} ((\nabla \times \p{S}^i) \circ \p{F}  ) .
		        \end{align*} This and the symmetry of $\p{S}$ makes it possible to write	        
		        \begin{align*}
		        \widehat{\nabla} \times \big[ \p{J}^T (\p{S} \circ \p{F}) \p{J} \big] & = \begin{pmatrix}
		        \sum_i  J_{i1} \ \det(\p{J})  (({\nabla} \times \p{S}_i) \circ \p{F}  ) \p{J}^{-T} \\
		        \sum_i  J_{i2} \ \det(\p{J})  (({\nabla} \times \p{S}_i) \circ \p{F}  ) \p{J}^{-T}\\
		        \sum_i  J_{i3} \ \det(\p{J})  (({\nabla} \times \p{S}_i) \circ \p{F}  ) \p{J}^{-T}
		        \end{pmatrix}\\
		          & =  \textup{det}(\p{J}) \,  \p{J}^T \ (({\nabla} \times \p{S}) \circ \p{F}  ) \ \p{J}^{-T} \\
		          & = \mathcal{Y}_3(\nabla \times \p{S}).
		        \end{align*}
		        \item[3.] For the last part of the proof we note that for a  vector-valued function  $(a_1,a_2,a_3)^T$ and a matrix-valued function $\p{C}$ it is \begin{align*}
		       % \label{eq:chain_rule_dive_aux}
		       \widehat{\nabla} \cdot \big((a_1,a_2,a_3)  \cdot \p{C}(\boldsymbol{\zeta})\big) = \sum_i  C_i \cdot (\widehat{\nabla} a_i)  + a_i \, (\widehat{\nabla} \cdot C_i).
		        \end{align*} This means  with the definition $\p{C} \coloneqq  \textup{det}(\p{J}) \, (\p{M} \circ \p{F})  \p{J}^{-T}$, where $\p{M}$ is a matrix-valued function, we obtain
		        \begin{align*}
		        \widehat{\nabla} \cdot \big(\mathcal{Y}_3(\p{M}) \big)  = \widehat{\nabla} \cdot \big(   \textup{det}(\p{J}) \p{J}^T  (\p{M} \circ \p{F})   \p{J}^{-T}   \big) =  \begin{pmatrix}
		        \sum_i  J_{i1} \ \widehat{\nabla} \cdot  C_i \\
		        \sum_i  J_{i2} \ \widehat{\nabla} \cdot C_i\\
		        \sum_i  J_{i3} \ \widehat{\nabla} \cdot C_i
		        \end{pmatrix}.
		        \end{align*}
		        In view of the divergence preserving Piola transformation $\widehat{\nabla} \cdot \big(\textup{det}(\p{J}) \, \p{J}^{-1}   ( \boldsymbol{v}  \circ \p{F}) \big) = \textup{det}(\p{J}) \, (\nabla \cdot \boldsymbol{v} )\circ \p{F}$ (compare 2.15 and 2.18 in \cite{Hiptmair2002FiniteEI}), it is \begin{align*}
		        \widehat{\nabla}  \cdot C_i &= \widehat{\nabla}  \cdot \big (\textup{det}(\p{J}) (\p{M} \circ \p{F})  \p{J}^{-T})_i =  \widehat{\nabla}  \cdot \big ( \textup{det}(\p{J})  \p{J}^{-1} \ (\p{M}^T \circ \p{F})^i)  \\ 
		        & = \textup{det}(\p{J}) (\nabla \cdot \p{M}_i) \circ \p{F} .
		        \end{align*}
		        And hence one gets 
		        $$ \widehat{\nabla} \cdot \big(\mathcal{Y}_3(\p{M}) \big) = \textup{det}(\p{J}) \, \p{J}^T  \big( (  \nabla \cdot \p{M}) \circ \p{F} \big) = \mathcal{Y}_4(\nabla \cdot \p{M}).  $$
	\end{itemize}
\end{proof}

An important feature of the mappings is the symmetry and trace preservation. We have the next simple lemma.
\begin{lemma}
	The mapping $\mathcal{Y}_2$ is symmetry-preserving and $\mathcal{Y}_3$ preserves the zero trace. %This means the  diagram in Fig. \ref{fig:comuting diagram} commutes.
\end{lemma}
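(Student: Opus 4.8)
The statement has two independent parts: first, that $\mathcal{Y}_2(\p{S})=\p{J}^T(\p{S}\circ\p{F})\p{J}$ is symmetric whenever $\p{S}$ is symmetric; second, that $\mathcal{Y}_3(\p{T})=\det(\p{J})\,\p{J}^T(\p{T}\circ\p{F})\p{J}^{-T}$ is traceless whenever $\p{T}$ is traceless. Both are pointwise linear-algebra facts about a fixed invertible matrix $\p{A}=\p{J}=D\p{F}$ (recall $\p{F}$ is affine, so $\p{J}$ is constant), evaluated at each point of $\widehat\Omega$; there is no analysis involved, only the congruence and similarity transformations $M\mapsto A^TMA$ and $M\mapsto \det(A)\,A^TMA^{-T}$.

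For the symmetry part, I would simply compute the transpose: $(\p{J}^T(\p{S}\circ\p{F})\p{J})^T=\p{J}^T(\p{S}\circ\p{F})^T\p{J}=\p{J}^T(\p{S}\circ\p{F})\p{J}$, using $(\p{S}\circ\p{F})^T=\p{S}^T\circ\p{F}=\p{S}\circ\p{F}$ by hypothesis, so $\mathcal{Y}_2(\p{S})\in\p{L}^2(\widehat\Omega,\mathbb{S})$. For the trace part, I would use the cyclic invariance of the trace: $\textup{tr}\big(\det(\p{J})\,\p{J}^T(\p{T}\circ\p{F})\p{J}^{-T}\big)=\det(\p{J})\,\textup{tr}\big(\p{J}^{-T}\p{J}^T(\p{T}\circ\p{F})\big)=\det(\p{J})\,\textup{tr}\big((\p{J}\p{J}^{-1})^T(\p{T}\circ\p{F})\big)=\det(\p{J})\,\textup{tr}(\p{T}\circ\p{F})=0$, since $\p{J}^{-T}\p{J}^T=(\p{J}\p{J}^{-1})^T=\p{I}$ and $\textup{tr}(\p{T})=0$ by hypothesis. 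Hence $\mathcal{Y}_3(\p{T})\in\p{L}^2(\widehat\Omega,\mathbb{T})$.

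There is essentially no obstacle here: this is a two-line verification, and the only thing to be careful about is that the operations ($\textup{tr}$, transpose, product) are all applied pointwise, and that the hypotheses ($\p{S}$ symmetric, $\p{T}$ traceless at every point) are inherited by the composition with $\p{F}$, which is immediate. One might also remark, for completeness, that this is exactly why the transformations $\mathcal{Y}_2$ and $\mathcal{Y}_3$ were chosen with these specific factors (the congruence form $\p{J}^T\cdot\p{J}$ to match the symmetry slot, and the $\det\p{J}$-weighted similarity-type form $\det(\p{J})\p{J}^T\cdot\p{J}^{-T}$ to match the traceless slot), so that the discrete complex built in the next section will land in the symmetric/traceless subspaces required by the Hessian complex of Definition~\ref{definition:Hessian}. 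I would phrase the whole argument in two short displayed computations and be done.
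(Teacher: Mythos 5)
Your proof is correct and follows essentially the same route as the paper: the symmetry part is the identical transpose computation, and your use of the cyclic invariance of the trace is just a rephrasing of the paper's appeal to the fact that similar matrices ($\p{J}^T(\p{T}\circ\p{F})\p{J}^{-T}$ versus $\p{T}\circ\p{F}$) have equal trace. Nothing is missing.
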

\begin{proof}
	Let $\p{S}$ be a symmetric matrix ,then $$\mathcal{Y}_2(\p{S})^T = \big( \p{J}^T (\p{S} \circ \p{F}) \p{J} \big)^T = \p{J}^T (\p{S} \circ \p{F})^T \p{J} = \p{J}^T (\p{S} \circ \p{F}) \p{J} = \mathcal{Y}_2(\p{S}). $$ Furthermore, since similar matrices have the same traces, the mapping $\mathcal{Y}_3$ maps traceless matrices to traceless matrices. 
More precisely, two matrices $\p{A}, \ \p{B}$ are called similar, if there exists an invertible matrix $\p{C}$ with $\p{A}= \p{C} \p{B} \p{C}^{-1}$.  Then it holds  $\textup{tr}(\p{A})= \textup{tr}(\p{B})$. Consequently we have for $\p{T} \in \p{H}(\Omega,\textup{div}, \mathbb{T})$ the equality chain
$$\textup{tr}(\mathcal{Y}_3(\p{T})) = \textup{tr}( \textup{det}(\p{J}) \p{J}^T  (\p{T} \circ \p{F})   \p{J}^{-T}  ) = \textup{det}(\p{J})  \  \textup{tr}(  \p{J}^T  (\p{T} \circ \p{F})   \p{J}^{-T}  )   = 0.$$
\end{proof}
Before exploiting the above transformations, we come back to  splines and introduce auxiliary spline function spaces on the parametric domain. They will be used later to introduce discrete spaces on the actual domain $\Omega$. 
\begin{definition}{(Discrete spaces in the parametric domain)}\\
	\label{def:sicrete_spaces_param}
	As already defined we denote with $S_{p_1,p_2,p_3}^{r_1,r_2,r_3}$ the scalar-valued  spline space with global  regularity $C^{r_i}$ w.r.t. the $i$-th coordinate. Clearly, this parametric spline space depends on the underlying mesh structure, which we assume to have the global mesh size $h$. 	Here we require the regularity to be greater or equal to one, i.e. $r_i \geq 1$. Then we can define the parametric test function spaces		
	\begin{align*}
	\widehat{V}^0_h &\coloneqq P_1(\widehat{\Omega}) \subset S_{p_1,p_2,p_3}^{r_1,r_2,r_3}, \\
	\widehat{V}_h^1 &\coloneqq S_{p_1,p_2,p_3}^{r_1,r_2,r_3} \subset H^2(\widehat{\Omega}), \nonumber \\
	\widehat{\boldsymbol{V}}_h^2 & \coloneqq \{ \textup{SYM}(s_1, \dots , s_6)  \  \ | \ (s_1, \dots , s_6) \in \\ &( S_{p_1-2,p_2,p_3}^{r_1-2,r_2,r_3} \times S_{p_1-1,p_2-1,p_3}^{r_1-1,r_2-1,r_3} \times S_{p_1-1,p_2,p_3-1}^{r_1-1,r_2,r_3-1} \times S_{p_1,p_2-2,p_3}^{r_1,r_2-2,r_3} \times S_{p_1,p_2-1,p_3-1}^{r_1,r_2-1,r_3-1} \times S_{p_1,p_2,p_3-2}^{r_1,r_2,r_3-2} ) \},	\nonumber \\
	\widehat{\boldsymbol{V}}_h^3 & \coloneqq  \{ \textup{TR}(t_1,\dots ,t_8)  \  \  | \ (t_1,\dots ,t_8) \in   \nonumber \\  & \hspace{1cm} ( S_{p_1-1,p_2-1,p_3-1}^{r_1-1,r_2-1,r_3-1} \times S_{p_1-2,p_2,p_3-1}^{r_1-2,r_2,r_3-1} \times S_{p_1-2,p_2-1,p_3}^{r_1-2,r_2-1,r_3} \times S_{p_1,p_2-2,p_3-1}^{r_1,r_2-2,r_3-1} \\ & \hspace{1.8cm}\times S_{p_1-1,p_2-1,p_3-1}^{r_1-1,r_2-1,r_3-1} \times S_{p_1-1,p_2-2,p_3}^{r_1-1,r_2-2,r_3} \times S_{p_1,p_2-1,p_3-2}^{r_1,r_2-1,r_3-2} \times S_{p_1-1,p_2,p_3-2}^{r_1-1,r_2,r_3-2} ) \}, \nonumber \\
	\widehat{\boldsymbol{V}}_h^4 & \coloneqq  S_{p_1-2,p_2-1,p_3-1}^{r_1-2,r_2-1,r_3-1} \times  S_{p_1-1,p_2-2,p_3-1}^{r_1-1,r_2-2,r_3-1} \times  S_{p_1-1,p_2-1,p_3-2}^{r_1-1,r_2-1,r_3-2}.
	\end{align*}
\end{definition}

\begin{remark}
	Due to the product structure of the splines it is easy to see that the differential operators $\widehat{\nabla}^2, \ \widehat{\nabla} \times$ and $\widehat{\nabla} \cdot $ match with our parametric spline spaces in the sense that $\widehat{\nabla}^2(\widehat{{V}}_h^1) \subset \widehat{\boldsymbol{V}}_h^2 , \ \widehat{\nabla} \times (\widehat{\boldsymbol{V}}_h^2) \subset \widehat{\boldsymbol{V}}_h^3$ and further $\widehat{\nabla} \cdot (\widehat{\boldsymbol{V}}_h^3) \subset \widehat{\boldsymbol{V}}_h^4$. This is an easy consequence of \eqref{eq:soline_der}. Moreover, looking at the regularity property of splines in Section \ref{section:splines}, we have indeed $\widehat{V}_h^1 \in H^2(\widehat{\Omega}),\widehat{\boldsymbol{V}}_h^2 \subset \p{H}(\widehat{\Omega},\textup{curl}, \mathbb{S}) , \ \widehat{\boldsymbol{V}}_h^3 \subset \p{H}(\widehat{\Omega},\textup{div}, \mathbb{T}). $ Besides,	 we want to remark that the space $P_1(\widehat{\Omega})$ is a subspace of every spline space if we have $p_i \geq 1$. Hence we can ignore the discretization step for the first space in the Hessian complex chain; see Def. \ref{definition:Hessian}. 
\end{remark}

In the next step we define the discrete spaces in the physical domain by means of the mappings $\mathcal{Y}_i$; see \eqref{eq:trans_mappings}.

\begin{definition}{(Discrete spaces in the physical  domain)}\\
The test spaces in the physical domain are defined by
 \begin{align*}
 V_h^0 \coloneqq \{ p \ | \ \mathcal{Y}_1(q) \in  P_1(\widehat{\Omega}) \}, \nonumber \\
V_h^1 \coloneqq \{ \phi_h \ | \ \mathcal{Y}_1(\phi_h) \in  \widehat{V}_h^1 \}, \nonumber \\
\boldsymbol{V}_h^2 \coloneqq \{ \p{S}_h \ | \ \mathcal{Y}_2(\p{S}_h ) \in  \widehat{\boldsymbol{V}}_h^2 \},  \\
\boldsymbol{V}_h^3 \coloneqq \{ \p{T}_h \ | \ \mathcal{Y}_3(\p{T}_h ) \in  \widehat{\boldsymbol{V}}_h^3 \}, \nonumber \\
\boldsymbol{V}_h^4 \coloneqq \{ \p{M}_h \ | \ \mathcal{Y}_4(\p{M}_h ) \in  \widehat{\boldsymbol{V}}^4_h \}. \nonumber
\end{align*}
\end{definition}

In particular due to the commutativity property of the mappings $\mathcal{Y}_i$ in  Lemma \ref{lemma:1} we obtain the discrete commuting  diagram in Fig. \ref{fig:discretediagram1}  .

	 \begin{figure}[h]
	\begin{center}
		\begin{tikzpicture}
		\node at (0,1.4) {$V_h^1$};
		
		\node at (3,1.4) {$\boldsymbol{V}^2_h$};

		\node at (6.5,1.4) {$\boldsymbol{V}^3_h$};

		\node at (9.5,1.4) {$\boldsymbol{V}^4_h$};

		\node at (1.3,1.7) {$\nabla^2$};

		\node at (4.7,1.7) {$\nabla \times $};

		\node at (8.2,1.7) {$\nabla \cdot $};

		\draw[->] (0.8,1.4) to (1.8,1.4);

		\draw[->] (4.2,1.4) to (5.3,1.4);
		
		\draw[->] (7.7,1.4) to (8.75,1.4);
		\draw[->] (10.2,1.4) to (10.9,1.4);
		\draw[->] (-1.6,1.4) to (-0.72,1.4);
		\node at (10.55,1.7) {$0$};
		\node at (-1.09,1.65) {$\iota_{P_1}$};
		\node at (11.45,1.4) {$\{0\}$};
		\node at (-2.25,1.4) {$P_1(\Omega)$};

		\draw[->] (3,1.1) -- (3,0.2);
		\draw[->] (-2.25,1.1) -- (-2.25,0.2);
		\draw[->] (11.45,1.1) -- (11.45,0.2);
		\draw[->] (0,1.1) -- (0,0.2);
		\draw[->] (6.5,1.1) -- (6.5,0.2);
		\draw[->] (9.5,1.1) -- (9.5,0.2);

		\node at (0,-0.1) {$\widehat{\boldsymbol{V}}^1_h$};
		
		\node at (3,-0.1) {$\widehat{\boldsymbol{V}}^2_h$};

		\node at (6.5,-0.1) {$\widehat{\boldsymbol{V}}^3_h$};

		\node at (9.5,-0.1) {$\widehat{\boldsymbol{V}}^4_h$};

		\node at (1.3,0.2) {$\widehat{\nabla}^2$};

		\node at (4.7,0.2) {$\widehat{\nabla} \times $};

		\node at (8.2,0.2) {$\widehat{\nabla} \cdot $};

		\draw[->] (0.8,-0.1) to (1.8,-0.1);

		\draw[->] (4.2,-0.1) to (5.3,-0.1);
		
		\draw[->] (7.7,-0.1) to (8.75,-0.1);
		\draw[->] (10.2,-0.1) to (10.9,-0.1);
		\draw[->] (-1.6,-0.1) to (-0.72,-0.1);
		\node at (10.55,0.2) {$0$};
		\node at (-1.09,0.15) {$\iota_{P_1}$};
		\node at (11.45,-0.1) {$\{0\}$};
		\node at (-2.25,-0.1) {$P_1(\widehat{\Omega})$};

		\node[left] at (-2.2,0.7) { \small $\mathcal{Y}_1$};	
		\node[left] at (0.05,0.7) { \small $\mathcal{Y}_1$};
		\node[left] at (3.05,0.7) { \small $\mathcal{Y}_2$};
		\node[left] at (6.55,0.7) { \small $\mathcal{Y}_3$};
		\node[left] at (9.55,0.7) { \small $\mathcal{Y}_4$};
		\node[left] at (11.45,0.7) { \small $\textup{id}$};
		%	\draw[->] (9,1) to (9,0);
		\end{tikzpicture}
	\end{center}
\caption{\small The transformations $\mathcal{Y}_i$ are used to relate the test spaces in parametric and physical domain.}
		\label{fig:discretediagram1}
\end{figure} \normalsize
For the subsequent considerations we use the  abbreviations 
\begin{equation}
\begin{alignedat}{3}
\label{eq:notation_of_the spaces}
V^1 &\coloneqq  H^2(\Omega),  \hspace{2cm} &&\widehat{V}^1 &&\coloneqq H^2(\widehat{\Omega}),\\
\boldsymbol{V}^2 &\coloneqq  \p{H}(\Omega,\textup{curl}, \mathbb{S}),  \hspace{2cm} &&\widehat{\boldsymbol{V}}^2 &&\coloneqq \p{H}(\widehat{\Omega},\textup{curl}, \mathbb{S}),\\
\boldsymbol{V}^3 &\coloneqq  \p{H}(\Omega,\textup{div}, \mathbb{T}),  \hspace{2cm} &&\widehat{\boldsymbol{V}}^3 &&\coloneqq \p{H}(\widehat{\Omega},\textup{div}, \mathbb{T}),\\
\boldsymbol{V}^4 &\coloneqq  \p{L}^2(\Omega),  \hspace{2cm} &&\widehat{\boldsymbol{V}}^4 &&\coloneqq \p{L}^2(\widehat{\Omega}).
\end{alignedat}
\end{equation}
Besides we write $V^0 \coloneqq P_1(\Omega)$ and $\widehat{V}^0 \coloneqq P_1(\widehat{\Omega})$.

Aim for the next part is to  show the existence of bounded projection operators $\Pi^1_h \colon V^1 \rightarrow V_h^1,$ $\ \Pi^i_h \colon \boldsymbol{V}^i \rightarrow \boldsymbol{V}_h^i, \ i>1 $, which satisfy suitable approximation estimates.
As the approach in \cite{Buffa2011IsogeometricDD}, we first look at the parametric case, i.e. we define projections of the form
$$\widehat{\Pi}^1_h \colon \widehat{V}^1 \rightarrow \widehat{V}_h^1,\ \ \ \widehat{\Pi}^i_h \colon \widehat{\boldsymbol{V}}^i \rightarrow \widehat{\boldsymbol{V}}_h^i.$$
 Underlying for the projection construction are projections onto spaces of univariate splines. Hence the approach exploits the product structure of our spline spaces.	We define three different projections, where two of them are the same as in the paper \cite[Section 3.1.2]{Buffa2011IsogeometricDD}. These projection operators are the basis for the projection definitions in the multivariate case.
\begin{definition}
	For $p_1-1 \geq r_1 \geq -1, \ p_2-1 \geq r_2 \geq 0$ and $p_3-1 \geq r_3 \geq 1$ we set
	\begin{align}
	\widehat{\Pi}_{p_1} &\colon L^2((0,1)) \rightarrow S_{p_1}^{r_1}, \  v \mapsto  \sum_i (\lambda_i^{p_1}v)\widehat{B}_{i,p_1},  \ \ \textup{analogous to }(3.6)  \ \textup{in \cite{Buffa2011IsogeometricDD}} \nonumber, \\
	\widehat{\Pi}_{p_2-1}^{c,1} &\colon L^2((0,1)) \rightarrow S_{p_2-1}^{r_2-1}, \ v \mapsto \frac{d }{d \zeta} \widehat{\Pi}_{p_2} \int_{0}^{\zeta} v(\tau) \ \textup{d} \tau,  \ \ \textup{analogous to } (3.9) \ \textup{in \cite{Buffa2011IsogeometricDD}}, \nonumber \\
	\widehat{\Pi}_{p_3-2}^{c,2} &\colon L^2((0,1)) \rightarrow S_{p_3-2}^{r_3-2}, \ v \mapsto \frac{d }{d \zeta} \widehat{\Pi}_{p_3-1}^{c,1} \int_{0}^{\zeta} v(\tau) \ \textup{d} \tau  = \frac{d^2}{d \zeta^2} \widehat{\Pi}_{p_3} \int_{0}^{\zeta} \int_{0}^{\tau} v(r) \ \textup{d}r\, \textup{d} \tau. \nonumber	
	\end{align}
	Above $\lambda_i^p$ denote the canonical dual basis functionals corresponding to the spline basis functions, i.e. $\lambda_i^p( \widehat{B}_{j,p}) = \delta_{ij}$. For more information we refer to \cite[Section 4.6]{schumaker_2007}.
\end{definition}
Useful for our purposes are the subsequent properties.
\begin{lemma}
	\label{lemma:univariate_spline_proj_prop}
	The above  projections of the univariate case satisfy the following  properties, where we assume $r \geq -1, \ p \geq 0$ for the first two lines and $p -1 \geq r \geq 0$ otherwise and let $I \coloneqq (\psi_i,\psi_{i+1})$ denote an arbitrary sub-interval induced by the discretization.
	\begin{align}
		%	\label{eq:lemma_uni_spline_prop_-1}
	\widehat{\Pi}_{p}s &= s  & \forall s \in S^{r}_{p} , \label{eq:lemma_uni_spline_prop_0}\\		
	|\widehat{\Pi}_{p} v|_{H^l(I)} &\leq C \ 	|v|_{H^l(\tilde{I})}  & \forall v \in H^l((0,1)), \ 0 \leq l \leq p+1, \nonumber \\
		\label{eq:lemma_uni_spline_prop_1}
	\widehat{\Pi}_{p-1}^{c,1}s &= s  & \forall s \in S^{r-1}_{p-1} , \\
		\label{eq:lemma_uni_spline_prop_2}
	\widehat{\Pi}_{p-1}^{c,1} \partial_{\zeta} v &= \partial_{\zeta} \widehat{\Pi}_{p}  v  & \forall v \in H^1((0,1)), \\
		\label{eq:lemma_uni_spline_prop_3}
	|\widehat{\Pi}_{p-1}^{c,1} v|_{H^l(I)} &\leq C \ 	| v|_{H^l(\tilde{I})}  & \forall v \in H^l((0,1)), \ 0 \leq l \leq p,
	\end{align}
	for some constant $C$ independent of mesh refinement.\\
	And if it is $r \geq 1, \ p \geq 2$ we  further have 
	\begin{align}
	\label{eq:lemma_uni_spline_prop_4}
\widehat{\Pi}_{p-2}^{c,2}s &= s  & \forall s \in S^{r-2}_{p-2} , \\
	\label{eq:lemma_uni_spline_prop_5}
\widehat{\Pi}_{p-2}^{c,2} \partial_{\zeta}^2 v &= \partial_{\zeta}^2 \widehat{\Pi}_{p}  v = \partial_{\zeta} \widehat{\Pi}_{p-1}^{c,1}  \partial_{\zeta}v & \forall v \in H^2((0,1)), \\	
|\widehat{\Pi}_{p-2}^{c,2} v|_{H^l(I)} &\leq C \ 	| v|_{H^l(\tilde{I})}  & \forall v \in H^l((0,1)), \ 0 \leq l \leq p-1. \label{eq:lemma_uni_spline_prop_6}
\end{align}
\end{lemma}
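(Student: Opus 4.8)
The plan is to sort the eight assertions into two groups: those about $\widehat{\Pi}_p$ and $\widehat{\Pi}_{p-1}^{c,1}$ — i.e.\ \eqref{eq:lemma_uni_spline_prop_0}--\eqref{eq:lemma_uni_spline_prop_3} together with the unlabelled stability bound for $\widehat{\Pi}_p$ — which are essentially \cite[Section~3.1.2]{Buffa2011IsogeometricDD} and are recalled only for completeness, and the genuinely new assertions \eqref{eq:lemma_uni_spline_prop_4}--\eqref{eq:lemma_uni_spline_prop_6} about the second-derivative projection $\widehat{\Pi}_{p-2}^{c,2}$, which are the point of the lemma. For $\widehat{\Pi}_p$: the reproduction \eqref{eq:lemma_uni_spline_prop_0} is immediate, since writing $s=\sum_j c_j\widehat{B}_{j,p}$ and using $\lambda_i^p(\widehat{B}_{j,p})=\delta_{ij}$ gives $\lambda_i^p(s)=c_i$, hence $\widehat{\Pi}_p s=\sum_i c_i\widehat{B}_{i,p}=s$. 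The stability estimate rests on the fact that the dual functionals of \cite[Section~4.6]{schumaker_2007} may be taken with $\operatorname{supp}\lambda_i^p\subset[\xi_i,\xi_{i+p+1}]=\operatorname{supp}\widehat{B}_{i,p}$, so that $(\widehat{\Pi}_p v)|_I$ depends only on $v|_{\tilde I}$; a scaling argument to a reference configuration together with the mesh-regularity Assumption then delivers the bound with $C$ independent of $h$, exactly as in \cite{Buffa2011IsogeometricDD}.

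For $\widehat{\Pi}_{p-1}^{c,1}$: the commutation \eqref{eq:lemma_uni_spline_prop_2} follows from $\int_0^\zeta\partial_\tau v\,\mathrm d\tau=v(\zeta)-v(0)$ and the fact that constants lie in $S_p^r$ (as $p\ge 0$) and are reproduced by $\widehat{\Pi}_p$, so that $\widehat{\Pi}_{p-1}^{c,1}\partial_\zeta v=\tfrac{d}{d\zeta}\widehat{\Pi}_p(v-v(0))=\partial_\zeta\widehat{\Pi}_p v$. The reproduction \eqref{eq:lemma_uni_spline_prop_1} uses that an antiderivative raises both degree and regularity by one, so $\int_0^\zeta s\,\mathrm d\tau\in S_p^r$ whenever $s\in S_{p-1}^{r-1}$; this is the precise statement, implicit in \cite{Buffa2011IsogeometricDD} and reflected by \eqref{eq:soline_der}, that $\partial_\zeta S_p^r=S_{p-1}^{r-1}$ for the appropriately derived knot vector, so that $\widehat{\Pi}_p$ fixes $\int_0^\zeta s$ and differentiation returns $s$. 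Finally, $|\widehat{\Pi}_{p-1}^{c,1}v|_{H^l(I)}=|\widehat{\Pi}_p\!\int_0^\zeta v|_{H^{l+1}(I)}\le C\,|\!\int_0^\zeta v|_{H^{l+1}(\tilde I)}=C\,|v|_{H^l(\tilde I)}$ as long as $0\le l+1\le p+1$, i.e.\ $0\le l\le p$, which is \eqref{eq:lemma_uni_spline_prop_3}.

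For $\widehat{\Pi}_{p-2}^{c,2}$, the key observation is the identity already recorded in its definition, $\widehat{\Pi}_{p-2}^{c,2}v=\partial_\zeta^2\widehat{\Pi}_p w$ with $w(\zeta):=\int_0^\zeta\!\int_0^\tau v(r)\,\mathrm dr\,\mathrm d\tau$, hence $w''=v$. From this: (i) for \eqref{eq:lemma_uni_spline_prop_5} and $v\in H^2$ one has $\int_0^\zeta\partial_\tau^2 v\,\mathrm d\tau=\partial_\zeta v(\zeta)-\partial_\zeta v(0)$, and since constants lie in $S_{p-1}^{r-1}$ (as $p\ge 2,\ r\ge 1$) they are reproduced by $\widehat{\Pi}_{p-1}^{c,1}$ by \eqref{eq:lemma_uni_spline_prop_1}, so $\widehat{\Pi}_{p-2}^{c,2}\partial_\zeta^2 v=\partial_\zeta\widehat{\Pi}_{p-1}^{c,1}\partial_\zeta v$, and applying \eqref{eq:lemma_uni_spline_prop_2} to $\partial_\zeta v$ rewrites the right-hand side as $\partial_\zeta^2\widehat{\Pi}_p v$; (ii) for \eqref{eq:lemma_uni_spline_prop_4} and $s\in S_{p-2}^{r-2}$, applying the integration property twice gives $w=\int_0^\zeta\!\int_0^\tau s\in S_p^r$, so $\widehat{\Pi}_p w=w$ and $\widehat{\Pi}_{p-2}^{c,2}s=w''=s$; (iii) for \eqref{eq:lemma_uni_spline_prop_6}, using $w''=v$ and the identity $|f|_{H^{l+2}(I)}=|f''|_{H^{l}(I)}$ with $f=\widehat{\Pi}_p w$, the stability of $\widehat{\Pi}_p$ yields $|\widehat{\Pi}_{p-2}^{c,2}v|_{H^l(I)}=|\widehat{\Pi}_p w|_{H^{l+2}(I)}\le C\,|w|_{H^{l+2}(\tilde I)}=C\,|v|_{H^l(\tilde I)}$, admissible precisely when $0\le l+2\le p+1$, i.e.\ $0\le l\le p-1$.

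The conceptually new ingredient — that the one-step reduction of \cite{Buffa2011IsogeometricDD} can be iterated to produce a projection compatible with the second-order operator $\widehat\nabla^2$ — is short once the first-order case is in hand; the expression $\widehat{\Pi}_{p-2}^{c,2}v=\partial_\zeta^2\widehat{\Pi}_p w$ does essentially all the work. The main obstacle is therefore not conceptual but a matter of careful bookkeeping: one has to check that the knot-vector conventions in Definition~\ref{def:sicrete_spaces_param} are chosen so that integration genuinely maps $S_{p-2}^{r-2}\to S_{p-1}^{r-1}\to S_p^r$, and one has to carry out the scaling argument for the stability estimates so that the constant $C$ is uniform under mesh refinement by virtue of the mesh-regularity Assumption. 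Both steps are routine adaptations of \cite{Buffa2011IsogeometricDD}, but they are where the care is required, especially since we are now one differentiation order higher than the de Rham setting.
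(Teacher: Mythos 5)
Your proposal is correct and follows essentially the same route as the paper: the paper likewise cites \cite[Section 3.1.2]{Buffa2011IsogeometricDD} for \eqref{eq:lemma_uni_spline_prop_0}--\eqref{eq:lemma_uni_spline_prop_3} and proves \eqref{eq:lemma_uni_spline_prop_4}--\eqref{eq:lemma_uni_spline_prop_6} by exactly your mechanism, namely reproduction of antiderivatives (using $\partial_\zeta^2 S_p^r = S_{p-2}^{r-2}$), absorption of the integration constants $a\zeta+b$ by the lower-order reproduction properties, and the chain $|\widehat{\Pi}_{p-2}^{c,2}v|_{H^l(I)} \leq |\widehat{\Pi}_{p-1}^{c,1}w|_{H^{l+1}(I)} \leq C\,|w|_{H^{l+1}(\tilde I)} \leq C\,|v|_{H^l(\tilde I)}$ for the stability bound. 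The only cosmetic difference is that the paper runs the stability estimate through $\widehat{\Pi}_{p-1}^{c,1}$ applied to a single antiderivative rather than through $\widehat{\Pi}_{p}$ applied to the double antiderivative, which is the same computation one differentiation lower.
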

\begin{proof}
	 The first five statements correspond to the properties (3.7),\ (3.8) and (3.10)-(3.12) in \cite{Buffa2011IsogeometricDD} if we keep the regular mesh assumption in mind.
	 Hence we only check the points \eqref{eq:lemma_uni_spline_prop_4}-\eqref{eq:lemma_uni_spline_prop_6}.\\
	  In the following we use (3.4) of \cite{Buffa2011IsogeometricDD}, which gives immediately that $\partial_{\zeta}^2 S_{p}^{r} = S_{p-2}^{r-2}$. Firstly, with property \eqref{eq:lemma_uni_spline_prop_1} we have
	 \begin{align*}
	 \widehat{\Pi}_{p-2}^{c,2}s =  \frac{d }{d \zeta} \widehat{\Pi}_{p-1}^{c,1} \underbrace{\int_{0}^{\zeta} s(\tau) \ \textup{d} \tau}_{\in S_{p-1}^{r-1}} =  \frac{d }{d \zeta}  {\int_{0}^{\zeta} s(\tau) \ \textup{d} \tau} =  s.
	 \end{align*}
	 In view of \eqref{eq:lemma_uni_spline_prop_0} and \eqref{eq:lemma_uni_spline_prop_2} one gets 
	 \begin{align*}
	 \widehat{\Pi}_{p-2}^{c,2} \partial_{\zeta}^2 v &= \frac{d^2}{d \zeta^2} \widehat{\Pi}_p \int_{0}^{\zeta} \int_{0}^{\tau}  \partial_{r}^2 v(r) \ \textup{d}r\, \textup{d} \tau =  \frac{d^2}{d \zeta^2} \widehat{\Pi}_p \big(v(\zeta) + a\cdot \zeta + b \big) =  \frac{d^2}{d \zeta^2} \widehat{\Pi}_p \big(v(\zeta)  \big), \\
	 \widehat{\Pi}_{p-2}^{c,2} \partial_{\zeta}^2 v &= \frac{d^2}{d \zeta^2} \widehat{\Pi}_p \int_{0}^{\zeta}  \partial_{r} v(r) + a \ \textup{d}r\, = \frac{d^2}{d \zeta^2} \widehat{\Pi}_p \int_{0}^{\zeta}  \partial_{r} v(r) \ \textup{d}r = \frac{d}{d \zeta} \widehat{\Pi}_{p-1}^{c,1}\partial_{\zeta} v.
	 \end{align*}
	 In the last two lines we wrote $a, b \in \mathbb{R}$ for suitable constants of integration. Hence \eqref{eq:lemma_uni_spline_prop_5} follows.\\ 
	 The last point can be proven in a similar fashion like equation 3.12 in \cite{Buffa2011IsogeometricDD} using the fact that if $v \in H^l((0,1)) $ then $w(\zeta)  \coloneqq \int_{0}^{\zeta} v \textup{d} \tau  \in H^{l+1}((0,1))$ and using estimate \eqref{eq:lemma_uni_spline_prop_3}. It is
	 $$|\widehat{\Pi}_{p-2}^{c,2} v|_{H^l(I)} \leq |\widehat{\Pi}_{p-1}^{c,1} w|_{H^{l+1}(I)} \leq C \, |w|_{H^{l+1}(\tilde{I})}  \leq C \, |v|_{H^{l}( \tilde{I})} .$$
\end{proof}

For the multivariate case we define projections as the tensor product $\otimes$ of the univariate case operators, e.g. $\widehat{\Pi}_{p_1,p_2,p_3} \colon L^2((0,1)^3) \rightarrow S_{p_1,p_2,p_3}^{r_1,r_2,r_3}, \ \ v \mapsto (\widehat{\Pi}_{p_1} \otimes \widehat{\Pi}_{p_2} \otimes \widehat{\Pi}_{p_3})v$.
Here $\widehat{\Pi}_{p_i}$ acts in some sense only in the $i$-th coordinate. For more information concerning the interpretation of  tensor product projections we refer to Section 4.1. in \cite{Buffa2011IsogeometricDD}.

Next we can define the projections for the parametric Hessian complex, namely we set for $p_i \geq 2 $ and $r_i \geq 1$.

\begin{definition}{(Projections in the parametric domain)}
	\small
	\begin{align*}
	\widehat{\Pi}^1_h & \coloneqq \widehat{\Pi}_{p_1} \otimes \widehat{\Pi}_{p_2} \otimes \widehat{\Pi}_{p_3}, \\
	\widehat{{\Pi}}^2_h &\coloneqq \textup{SYM} \Big( \big(\widehat{\Pi}_{p_1-2}^{c,2} \otimes \widehat{\Pi}_{p_2} \otimes \widehat{\Pi}_{p_3} \big)   \times  \big(\widehat{\Pi}_{p_1-1}^{c,1} \otimes \widehat{\Pi}_{p_2-1}^{c,1} \otimes \widehat{\Pi}_{p_3} \big) \times \big(\widehat{\Pi}_{p_1-1}^{c,1} \otimes \widehat{\Pi}_{p_2} \otimes \widehat{\Pi}_{p_3-1}^{c,1} \big) \nonumber \\ &  \hspace{1cm} \times \big(\widehat{\Pi}_{p_1} \otimes \widehat{\Pi}_{p_2-2}^{c,2} \otimes \widehat{\Pi}_{p_3} \big) \times \big(\widehat{\Pi}_{p_1}\otimes \widehat{\Pi}_{p_2-1}^{c,1} \otimes \widehat{\Pi}_{p_3-1}^{c,1} \big) \times \big(\widehat{\Pi}_{p_1} \otimes \widehat{\Pi}_{p_2} \otimes \widehat{\Pi}_{p_3-2}^{c,2} \big) \Big), \\
	\widehat{{\Pi}}^3_h & \coloneqq \textup{MAT} \Big( \big(\widehat{\Pi}_{p_1-1}^{c,1} \otimes \widehat{\Pi}_{p_2-1}^{c,1} \otimes \widehat{\Pi}_{p_3-1}^{c,1} \big) \times \big(\widehat{\Pi}_{p_1-2}^{c,2} \otimes \widehat{\Pi}_{p_2} \otimes \widehat{\Pi}_{p_3-1}^{c,1} \big) \times \big(\widehat{\Pi}_{p_1-2}^{c,2} \otimes \widehat{\Pi}_{p_2-1}^{c,1} \otimes \widehat{\Pi}_{p_3} \big) \nonumber \\ & \hspace{1cm} \times \big(\widehat{\Pi}_{p_1} \otimes \widehat{\Pi}_{p_2-2}^{c,2} \otimes \widehat{\Pi}_{p_3-1}^{c,1} \big) \times \big(\widehat{\Pi}_{p_1-1}^{c,1} \otimes \widehat{\Pi}_{p_2-1}^{c,1} \otimes \widehat{\Pi}_{p_3-1}^{c,1} \big) \times \big(\widehat{\Pi}_{p_1-1}^{c,1} \otimes \widehat{\Pi}_{p_2-2}^{c,2} \otimes \widehat{\Pi}_{p_3} \big) \nonumber \\
	& \hspace{1cm} \times  \big(\widehat{\Pi}_{p_1} \otimes \widehat{\Pi}_{p_2-1}^{c,1} \otimes \widehat{\Pi}_{p_3-2}^{c,2} \big) \times  \big(\widehat{\Pi}_{p_1-1}^{c,1} \otimes \widehat{\Pi}_{p_2} \otimes \widehat{\Pi}_{p_3-2}^{c,2} \big) \times \big(\widehat{\Pi}_{p_1-1}^{c,1} \otimes \widehat{\Pi}_{p_2-1}^{c,1} \otimes \widehat{\Pi}_{p_3-1}^{c,1} \big) \Big),\\
	\widehat{{\Pi}}^4_h &\coloneqq  \Big( \big(\widehat{\Pi}_{p_1-2}^{c,2} \otimes \widehat{\Pi}_{p_2-1}^{c,1} \otimes \widehat{\Pi}_{p_3-1}^{c,1} \big)   \times  \big(\widehat{\Pi}_{p_1-1}^{c,1} \otimes \widehat{\Pi}_{p_2-2}^{c,2} \otimes \widehat{\Pi}_{p_3-1}^{c,1} \big) \times \big(\widehat{\Pi}_{p_1-1}^{c,1} \otimes \widehat{\Pi}_{p_2-1}^{c,1} \otimes \widehat{\Pi}_{p_3-2}^{c,2} \big) \Big) .
	\end{align*}
\end{definition}
\normalsize

The latter projection operators preserve the structure of the Hessian complex since they commute with the respective differential operators. We have the subsequent lemma.
\begin{lemma}
	It holds \begin{align}
	\label{eq:commutation_param_diff_1}
	\widehat{\nabla}^2 \big(\widehat{{\Pi}}^1_h \widehat{\phi}\big) &= \widehat{{\Pi}}^2_h \big(\widehat{\nabla}^2 \widehat{\phi} \big), &\forall \, \widehat{\phi} \in \widehat{V}^1, \\
	\widehat{\nabla} \times \big(\widehat{{\Pi}}^2_h \widehat{\boldsymbol{S}} \big) &= \widehat{{\Pi}}^3_h \big(\widehat{\nabla} \times \widehat{\boldsymbol{S}} \big), &\forall \, \widehat{\boldsymbol{S}} \in \widehat{\boldsymbol{V}}^2, \nonumber \\
	\widehat{\nabla} \cdot \big(\widehat{{\Pi}}^3_h \widehat{\boldsymbol{T}} \big) &= \widehat{{\Pi}}^4_h \big(\widehat{\nabla} \cdot \widehat{\boldsymbol{T}} \big), &\forall \, \widehat{\boldsymbol{T}} \in \widehat{\boldsymbol{V}}^3. \nonumber
	\end{align}
\end{lemma}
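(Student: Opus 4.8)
The plan is to reduce all three matrix identities to a short list of univariate facts about the operators $\widehat{\Pi}_{p_i}$, $\widehat{\Pi}_{p_i-1}^{c,1}$, $\widehat{\Pi}_{p_i-2}^{c,2}$, exploiting that every space and every projection here is a tensor product in the three parametric coordinates while $\widehat{\nabla}^2$, $\widehat{\nabla}\times$, $\widehat{\nabla}\cdot$ are built from the single partial derivatives $\partial_1,\partial_2,\partial_3$. The univariate input consists of: (i) $\partial_\zeta\widehat{\Pi}_{p} = \widehat{\Pi}_{p-1}^{c,1}\partial_\zeta$ on $H^1((0,1))$, which is \eqref{eq:lemma_uni_spline_prop_2}; (ii) $\partial_\zeta\widehat{\Pi}_{p-1}^{c,1} = \widehat{\Pi}_{p-2}^{c,2}\partial_\zeta$ on $H^1((0,1))$ (for $p\geq 2$, $r\geq 1$), obtained by inserting $w=\partial_\zeta v$ into \eqref{eq:lemma_uni_spline_prop_5}, and hence $\partial_\zeta^2\widehat{\Pi}_{p} = \widehat{\Pi}_{p-2}^{c,2}\partial_\zeta^2$ on $H^2((0,1))$; and (iii) the trivial fact that a univariate projection acting in the $\ell$-th parametric direction commutes with $\partial_{\zeta_k}$ for every $k\neq\ell$, which is immediate from the tensor-product construction (cf. Section 4.1 of \cite{Buffa2011IsogeometricDD}). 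Nothing beyond (i)--(iii) is needed; the remainder is a matter of matching tensor slots.

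First I would prove \eqref{eq:commutation_param_diff_1}. Using the symmetry of second derivatives I write $\widehat{\nabla}^2\widehat{\phi} = \textup{SYM}\big(\partial_1^2\widehat{\phi},\partial_1\partial_2\widehat{\phi},\partial_1\partial_3\widehat{\phi},\partial_2^2\widehat{\phi},\partial_2\partial_3\widehat{\phi},\partial_3^2\widehat{\phi}\big)$, and likewise for $\widehat{\nabla}^2(\widehat{\Pi}^1_h\widehat{\phi})$ with $\widehat{\Pi}^1_h=\widehat{\Pi}_{p_1}\otimes\widehat{\Pi}_{p_2}\otimes\widehat{\Pi}_{p_3}$. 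Each of the six entries is then checked separately: for a diagonal entry, e.g. the $(1,1)$-entry, one pushes $\partial_1^2$ through $\widehat{\Pi}_{p_2}\otimes\widehat{\Pi}_{p_3}$ by (iii) and uses $\partial_1^2\widehat{\Pi}_{p_1}=\widehat{\Pi}_{p_1-2}^{c,2}\partial_1^2$ from (ii), landing in $(\widehat{\Pi}_{p_1-2}^{c,2}\otimes\widehat{\Pi}_{p_2}\otimes\widehat{\Pi}_{p_3})\partial_1^2\widehat{\phi}$, which is exactly the first slot of $\widehat{\Pi}^2_h$; for an off-diagonal entry, e.g. $\partial_1\partial_2\widehat{\phi}$, one uses (i) in directions $1$ and $2$ together with (iii), obtaining $(\widehat{\Pi}_{p_1-1}^{c,1}\otimes\widehat{\Pi}_{p_2-1}^{c,1}\otimes\widehat{\Pi}_{p_3})\partial_1\partial_2\widehat{\phi}$, the second slot. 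Running through all six slots --- which is precisely how $\widehat{\Pi}^2_h$ was defined --- gives the first identity.

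For the curl identity I would expand $\widehat{\nabla}\times$ row by row: $\widehat{\nabla}\times\textup{SYM}(s_1,\dots,s_6)=\textup{MAT}(m_1,\dots,m_9)$ where each $m_k$ is an explicit difference of first partials of the $s_i$ (for instance $m_1=\partial_2 s_3-\partial_3 s_2$, $m_2=\partial_3 s_1-\partial_1 s_3$, $m_3=\partial_1 s_2-\partial_2 s_1$, and analogously for rows $2$ and $3$). Applying $\widehat{\Pi}^2_h$ to $\textup{SYM}(s_1,\dots,s_6)$, taking the row-wise curl of the result, and in each entry commuting the partial derivatives past the foreign tensor factors (by (iii)) and invoking (i) or (ii) in the active direction, one verifies that the $k$-th entry equals the $k$-th component operator of $\widehat{\Pi}^3_h$ applied to $m_k$; the slot choices in the definitions of $\widehat{\Pi}^2_h$ and $\widehat{\Pi}^3_h$ are arranged precisely so this succeeds. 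The divergence identity is handled the same way: $\widehat{\nabla}\cdot$ acts row-wise, producing a three-vector whose $i$-th component is $\partial_1(\cdot)+\partial_2(\cdot)+\partial_3(\cdot)$ of the $i$-th row of $\widehat{\nabla}\cdot\widehat{\boldsymbol{T}}$, and after applying $\widehat{\Pi}^3_h$ and commuting as before each of these combinations lands in the matching component of $\widehat{\Pi}^4_h$.

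The only step that is not pure bookkeeping is the univariate identity (ii): rewriting $\widehat{\Pi}_{p-2}^{c,2}\partial_\zeta w = \tfrac{d^2}{d\zeta^2}\widehat{\Pi}_{p}\int_0^\zeta\int_0^\tau\partial_r w$ produces an extra affine term in the inner integral, and one has to use that $\widehat{\Pi}_{p}$ reproduces linear polynomials to discard it and recover $\tfrac{d^2}{d\zeta^2}\widehat{\Pi}_{p}\int_0^\zeta w = \partial_\zeta\widehat{\Pi}_{p-1}^{c,1}w$ --- this is exactly the computation already performed in the proof of \eqref{eq:lemma_uni_spline_prop_5}. Apart from that, the main thing to watch is consistency of indices: one must make sure that every univariate projection that appears (in particular each $\widehat{\Pi}_{p_i-2}^{c,2}$) is well-defined, which is guaranteed by the standing hypotheses $p_i\geq 2$ and $r_i\geq 1$, and that the regularity of $\widehat{\phi}\in\widehat{V}^1$, $\widehat{\boldsymbol{S}}\in\widehat{\boldsymbol{V}}^2$, $\widehat{\boldsymbol{T}}\in\widehat{\boldsymbol{V}}^3$ is enough for the partial derivatives occurring in each expansion to make sense in $L^2$.
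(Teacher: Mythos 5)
Your proposal is correct and follows essentially the same route as the paper: reduce everything to the univariate commutation identities \eqref{eq:lemma_uni_spline_prop_2} and \eqref{eq:lemma_uni_spline_prop_5} and check the matrix entries slot by slot using the tensor-product structure of the projections; the paper carries this out explicitly for the $(1,1)$ and $(1,2)$ entries of the Hessian and leaves the curl and divergence cases to the reader. The only small point worth making explicit, which the paper handles by first taking $\widehat{\phi}$ smooth with compact support and then invoking the $L^2$-boundedness of the projections together with a density argument, is that for general elements of $\widehat{V}^1$, $\widehat{\boldsymbol{V}}^2$, $\widehat{\boldsymbol{V}}^3$ the individual partial derivatives appearing in your entry-wise expansions need not all lie in $L^2$, so the identities should be established on a dense smooth subspace first and then extended by continuity.
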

\begin{proof}
	The proof is analogous to the one for Lemma 4.3 in \cite{Buffa2011IsogeometricDD} and a  consequence of the commutation properties \eqref{eq:lemma_uni_spline_prop_2} and \eqref{eq:lemma_uni_spline_prop_5} and the tensor product construction of the splines spaces and projections. For reasons of explanations we show the assertion for two different entries of the matrix  \eqref{eq:commutation_param_diff_1}. The rest follows with very similar ideas. Let $\widehat{\phi}$ be a smooth function with compact support in $\widehat{\Omega}$. \\
	For example we have 
	\begin{align*}
	\Big( 	\widehat{\nabla}^2 \big(\widehat{{\Pi}}^1_h \widehat{\phi}\big)  \Big)_{11} &= \partial_{\zeta_1}^2 \big(\widehat{{\Pi}}^1_h \widehat{\phi}\big) =   \partial_{\zeta_1}^2 \big( \widehat{\Pi}_{p_1} \otimes \widehat{\Pi}_{p_2} \otimes \widehat{\Pi}_{p_3} \big)\widehat{\phi} =\partial_{\zeta_1}^2 ( \widehat{\Pi}_{p_1} ( \widehat{\Pi}_{p_2} ( \widehat{\Pi}_{p_3} \widehat{\phi})))= \\
	&=  ( \widehat{\Pi}_{p_1-2}^{c,2} ( \widehat{\Pi}_{p_2} ( \widehat{\Pi}_{p_3} \partial_{\zeta_1}^2\widehat{\phi}))) = \big(\widehat{\Pi}_{p_1-2}^{c,2} \otimes \widehat{\Pi}_{p_2} \otimes \widehat{\Pi}_{p_3}\big) \partial_{\zeta_1}^2\widehat{\phi} = \Big( \widehat{{\Pi}}^2_h 	\widehat{\nabla}^2 \big( \widehat{\phi}\big)  \Big)_{11}.
	\end{align*}
	On the other hand we get e.g.
	\begin{align*}
		\Big( 	\widehat{\nabla}^2 \big(\widehat{{\Pi}}^1_h \widehat{\phi}\big)  \Big)_{12} &= \partial_{\zeta_1} \partial_{\zeta_2} \big(\widehat{{\Pi}}^1_h \widehat{\phi}\big) = \partial_{\zeta_1} \partial_{\zeta_2} ( \widehat{\Pi}_{p_1} ( \widehat{\Pi}_{p_2} ( \widehat{\Pi}_{p_3} \widehat{\phi}))) = \partial_{\zeta_1} ( \widehat{\Pi}_{p_1} ( \widehat{\Pi}_{p_2-1}^{c,1} ( \widehat{\Pi}_{p_3} \partial_{\zeta_2}\widehat{\phi}))) \\
	&=   ( \widehat{\Pi}_{p_1-1}^{c,1} ( \widehat{\Pi}_{p_2-1}^{c,1} ( \widehat{\Pi}_{p_3} \partial_{\zeta_1}\partial_{\zeta_2}\widehat{\phi})))  = \big(\widehat{\Pi}_{p_1-1}^{c,1} \otimes \widehat{\Pi}_{p_2-1}^{c,1} \otimes \widehat{\Pi}_{p_3}\big) \partial_{\zeta_1}\partial_{\zeta_2}\widehat{\phi} \\
	&= \Big( \widehat{{\Pi}}^2_h 	\widehat{\nabla}^2 \big( \widehat{\phi}\big)  \Big)_{12}.
	\end{align*}
Doing similar computations for the other components  together with the boundedness of the projections in the univariate case, Lemma  \ref{lemma:stability_projections_param} respectively, and a density argument leads to \eqref{eq:commutation_param_diff_1}.\\
The proof of the rest of the statement uses analogous steps and is not shown here.
\end{proof}

Now the projections in the physical domain are  defined through the mappings $\mathcal{Y}_i$. We set 
\begin{align*}
\Pi^1_h &\coloneqq \mathcal{Y}_1^{-1} \circ \widehat{{\Pi}}^1_h \circ  \mathcal{Y}_1, \\
\Pi^2_h &\coloneqq \mathcal{Y}_2^{-1} \circ \widehat{{\Pi}}^2_h \circ  \mathcal{Y}_2, \\
\Pi^3_h &\coloneqq \mathcal{Y}_3^{-1} \circ \widehat{{\Pi}}^3_h \circ  \mathcal{Y}_3, \\
\Pi^4_h &\coloneqq \mathcal{Y}_4^{-1} \circ \widehat{{\Pi}}^4_h \circ  \mathcal{Y}_4.  
\end{align*}
One notes the invertibility  of the $\mathcal{Y}_i$ and further, because of the fact  $P_1(\widehat{\Omega}) \subset S_{p_1,p_2,p_3}^{r_1,r_2,r_3}  $, we can set $\Pi_h^0 \coloneqq \textup{id}$.
Due to the definition of the latter projections combined with the commutativity properties of the parametric projections as well as of the mappings $\mathcal{Y}_i$, the next diagram commutes.

	 \begin{figure}[h]
	\begin{center}
		\begin{tikzpicture}
		\node at (0,1.4) {$V^1$};
		
		\node at (3,1.4) {$\boldsymbol{V}^2$};

		\node at (6.5,1.4) {$\boldsymbol{V}^3$};

		\node at (9.5,1.4) {$\boldsymbol{V}^4$};

		\node at (1.3,1.7) {$\nabla^2$};

		\node at (4.7,1.7) {$\nabla \times $};

		\node at (8.2,1.7) {$\nabla \cdot $};

		\draw[->] (0.8,1.4) to (1.8,1.4);

		\draw[->] (4.2,1.4) to (5.3,1.4);
		
		\draw[->] (7.7,1.4) to (8.75,1.4);
		\draw[->] (10.2,1.4) to (10.9,1.4);
		\draw[->] (-1.6,1.4) to (-0.72,1.4);
		\node at (10.55,1.7) {$0$};
		\node at (-1.09,1.65) {$\iota_{P_1}$};
		\node at (11.45,1.4) {$\{0\}$};
		\node at (-2.25,1.4) {$P_1(\Omega)$};

		\draw[->] (3,1.1) -- (3,0.2);
		\draw[->] (-2.25,1.1) -- (-2.25,0.2);
		\draw[->] (11.45,1.1) -- (11.45,0.2);
		\draw[->] (0,1.1) -- (0,0.2);
		\draw[->] (6.5,1.1) -- (6.5,0.2);
		\draw[->] (9.5,1.1) -- (9.5,0.2);

		\node at (0,-0.1) {${{V}}^1_h$};
		
		\node at (3,-0.1) {${\boldsymbol{V}}^2_h$};

		\node at (6.5,-0.1) {${\boldsymbol{V}}^3_h$};

		\node at (9.5,-0.1) {${\boldsymbol{V}}^4_h$};

		\node at (1.3,0.2) {${\nabla}^2$};

		\node at (4.7,0.2) {${\nabla} \times $};

		\node at (8.2,0.2) {${\nabla} \cdot $};

		\draw[->] (0.8,-0.1) to (1.8,-0.1);

		\draw[->] (4.2,-0.1) to (5.3,-0.1);
		
		\draw[->] (7.7,-0.1) to (8.75,-0.1);
		\draw[->] (10.2,-0.1) to (10.9,-0.1);
		\draw[->] (-1.6,-0.1) to (-0.72,-0.1);
		\node at (10.55,0.2) {$0$};
		\node at (-1.09,0.15) {$\iota_{P_1}$};
		\node at (11.45,-0.1) {$\{0\}$};
		\node at (-2.25,-0.1) {$V_h^0$};

		\node[left] at (-2.2,0.7) { \small $\textup{id}$};	
		\node[left] at (0.05,0.7) { \small $\Pi_h^1$};
		\node[left] at (3.05,0.7) { \small $\Pi^2_h$};
		\node[left] at (6.55,0.7) { \small $\Pi^3_h$};
		\node[left] at (9.55,0.7) { \small $\Pi^4_h$};
		\node[left] at (11.45,0.7) { \small $\textup{id}$};
		%	\draw[->] (9,1) to (9,0);
		\end{tikzpicture}
	\end{center}
\caption{\small The  diagram commutes and thus the discretization is compatible with the complex.} \label{fig:commuting_digram_3}
\end{figure}
\normalsize
The commutativity of latter diagram makes it possible to proof  later approximation estimates for different model problems. Prior to that we look at approximation properties of the discrete spaces from above. This is subject of the following section.
\section{Approximation estimates}
In the previous section we showed how one can use splines to construct in some sense structure-preserving discrete spaces for the Hessian complex, i.e. the diagram in Fig. \ref{fig:commuting_digram_3} commutes. Hence it is reasonable to face the approximation behavior of the discrete spaces from above in order to set up  convergence estimates for different PDE problems later. As in \cite{Buffa2011IsogeometricDD} we do this by relating the approximation behavior in the parametric domain to the spline based spaces on the physical domain. Underlying for this approach are the next three lemmas which correspond to the Lemma 5.2 , \ Lemma 4.2 and Lemma 5.1 in \cite{Buffa2011IsogeometricDD}, where we consider the case without boundary conditions. Here and in the following we assume for reasons of simplification $p_1=p_2=p_3 = p \geq 2$ and $r_1=r_2=r_3 =r \geq 1$. Further, we introduce the \emph{broken}-Sobolev spaces $\mathcal{H}^{l}$ by
\begin{align*}
\mathcal{H}^{l}(\widehat{\Omega}) \coloneqq  \{ v \in L^2(\widehat{\Omega}) \ | \ v_{| K} \in H^l(K), \ \forall K \in \widehat{M}  \},
\end{align*}
where the corresponding semi-norm is defined via 
\begin{align*}
|v|_{\mathcal{H}^l(\widehat{Q})}^2 \coloneqq \sum_{\underset{K \cap \widehat{Q} \neq \emptyset}{K \in \widehat{{M}}}}  |v|_{H^l(K)}^2,
\end{align*}
with obvious component-wise generalization to  vector- or matrix-valued mappings. In latter case we use again a bold-type notation, i.e. $ \boldsymbol{\mathcal{H}}^l$.
\begin{lemma}{(Regularity preservation)}
	\label{lemma:regularity_transformation}\\
	Due to the fact that $\p{F}$ is smooth we have for some constant $C$, independent of mesh-refinement and of the mesh element $ \mathcal{K}= \p{F}(K)$, $K \in \widehat{{M}}$,
	\begin{alignat*}{4}
	\frac{1}{C} \norm{\phi}_{H^l(\mathcal{K})} &\leq \norm{\mathcal{Y}_1(\phi)}_{H^l(K)} &&\leq C \norm{\phi}_{H^l(\mathcal{K})}, \ \ \ &&\forall \phi \in H^l(\mathcal{K}), \\
		\frac{1}{C} \norm{\p{S}}_{\p{H}^l(\mathcal{K})} &\leq \norm{\mathcal{Y}_2(\p{S})}_{\p{H}^l(K)} &&\leq C \norm{\p{S}}_{\p{H}^l(\mathcal{K})}, \ \ \ &&\forall \p{S} \in \p{H}^l(\mathcal{K}), \\
				\frac{1}{C} \norm{\p{T}}_{\p{H}^l(\mathcal{K})} &\leq \norm{\mathcal{Y}_3(\p{T})}_{\p{H}^l(K)} &&\leq C \norm{\p{T}}_{\p{H}^l(\mathcal{K})}, \ \ \ &&\forall \p{T} \in \p{H}^l(\mathcal{K}),\\
		\frac{1}{C} \norm{\boldsymbol{v}}_{\p{H}^l(\mathcal{K})} &\leq \norm{\mathcal{Y}_4(\boldsymbol{v})}_{\p{H}^l(K)} &&\leq C \norm{\boldsymbol{v}}_{\p{H}^l(\mathcal{K})}, \ \ \ &&\forall \boldsymbol{v} \in \p{H}^l(\mathcal{K}).
	\end{alignat*}
	\begin{proof}
		Follows directly by the smoothness of $\p{F}$.
	\end{proof}
\end{lemma}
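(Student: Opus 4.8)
The plan is to reduce the four estimates to the elementary behaviour of Sobolev (semi-)norms under composition with the affine map $\p{F}$ and under multiplication by the \emph{constant} matrices occurring in \eqref{eq:trans_mappings}. Since $\p{F}(\boldsymbol{\zeta}) = \p{A}\boldsymbol{\zeta} + \p{b}$ is affine, its Jacobian $\p{J} = \p{A}$ is a fixed invertible matrix and all derivatives of $\p{F}$ of order $\geq 2$ vanish; in particular $\det(\p{J})$, $\p{J}^{-1}$ and $\p{J}^{-T}$ are mesh-independent constant matrices, and the restriction of $\p{F}$ to any parametric element $K \in \widehat{M}$ is an affine bijection onto $\mathcal{K} = \p{F}(K)$.

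First I would treat $\mathcal{Y}_1$. For a derivative of order $l$ the iterated chain rule gives
\[
\partial_{\zeta_{i_1}}\!\cdots\partial_{\zeta_{i_l}}\big(\phi\circ\p{F}\big) \;=\; \sum_{j_1,\dots,j_l} \big((\partial_{x_{j_1}}\!\cdots\partial_{x_{j_l}}\phi)\circ\p{F}\big)\, A_{j_1 i_1}\cdots A_{j_l i_l},
\]
so every $l$-th order derivative of $\mathcal{Y}_1(\phi)$ on $K$ is a linear combination, with coefficients that are products of entries of $\p{A}$, of $l$-th order derivatives of $\phi$ evaluated along $\p{F}$; there is no lower-order contribution precisely because $\p{F}$ is affine (compare the term $\sum_k (\widehat{\nabla}^2 F_k)\partial_k\phi$ appearing in the proof of Lemma \ref{lemma:1}). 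The change of variables $x = \p{F}(\zeta)$ turns $\int_K |(\partial^\beta\phi)\circ\p{F}|^2\,\textup{d}\zeta$ into $|\det\p{A}|^{-1}\int_{\mathcal{K}}|\partial^\beta\phi|^2\,\textup{d}x$; summing over all orders $0 \le l' \le l$ yields $\norm{\mathcal{Y}_1(\phi)}_{H^l(K)} \le C\,\norm{\phi}_{H^l(\mathcal{K})}$ with $C = C(\p{A},l)$. The converse inequality follows by applying the same estimate to the affine inverse $\p{F}^{-1}$ and to the function $\mathcal{Y}_1(\phi)$, since $\mathcal{Y}_1^{-1}(\psi) = \psi\circ\p{F}^{-1}$. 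The constant depends only on $l$ and on $\norm{\p{A}}$, $\norm{\p{A}^{-1}}$, $|\det\p{A}|$, hence neither on the mesh size $h$ nor on the element, because one and the same matrix $\p{A}$ governs every $\mathcal{K}$.

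For $\mathcal{Y}_2$, $\mathcal{Y}_3$, $\mathcal{Y}_4$ I would argue componentwise. By the definitions \eqref{eq:trans_mappings} each entry of $\mathcal{Y}_i(\p{M})$, resp. each component of $\mathcal{Y}_4(\boldsymbol{v})$, is a constant-coefficient linear combination of the pulled-back component functions $M_{kl}\circ\p{F}$ (resp. $v_k\circ\p{F}$), the coefficients being products of entries of $\p{J}$, $\p{J}^{-1}$ and the scalar $\det(\p{J})$. Applying the $\mathcal{Y}_1$-estimate to each such component and absorbing these finitely many constants into $C$ gives the upper bounds $\norm{\mathcal{Y}_i(\cdot)}_{\p{H}^l(K)} \le C\,\norm{\cdot}_{\p{H}^l(\mathcal{K})}$. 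The lower bounds follow in the same way once one observes that $\mathcal{Y}_i^{-1}$ has an identical structure with $\p{J}$ replaced by $\p{J}^{-1}$, e.g. $\mathcal{Y}_2^{-1}(\p{N}) = \big(\p{A}^{-T}\p{N}\,\p{A}^{-1}\big)\circ\p{F}^{-1}$ and $\mathcal{Y}_3^{-1}(\p{N}) = \det(\p{A})^{-1}\big(\p{A}^{-T}\p{N}\,\p{A}^{T}\big)\circ\p{F}^{-1}$.

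I do not expect a real obstacle in this lemma; the one point that deserves attention is the uniformity of $C$ in the element $\mathcal{K}$, and this is immediate here only because the affinity of $\p{F}$ makes the Jacobian a single constant matrix for the whole domain — in contrast to general isogeometric parametrizations, where this step needs shape-regularity of the mesh. What remains is the routine bookkeeping of the multivariate chain rule, of the change-of-variables factor $|\det\p{A}|^{\pm 1/2}$, and of the fact that left/right multiplication by a fixed matrix is bounded on each $\p{H}^l(K)$-norm since it only recombines components with constant coefficients.
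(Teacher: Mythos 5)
Your proof is correct and follows exactly the route the paper intends: the paper's own proof is the single line ``Follows directly by the smoothness of $\p{F}$,'' and your argument is the standard elaboration of that remark via the affine chain rule, the change-of-variables factor $|\det\p{A}|$, constant-coefficient matrix multiplication, and the explicit inverses $\mathcal{Y}_i^{-1}$ for the lower bounds. Your closing observation that uniformity of $C$ in the element is automatic here only because $\p{J}=\p{A}$ is one fixed matrix is a correct and worthwhile point that the paper leaves implicit.
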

\begin{lemma}{(Stability of the projections)}\\
	\label{lemma:stability_projections_param}
	The projections $\widehat{{\Pi}}^1_h, \dots , \widehat{{\Pi}}^4_h$ are continuous in the sense
		\begin{alignat*}{3}
\norm{ \widehat{{\Pi}}^1_h\widehat{\phi}}_{L^2(K)} &&\leq C \norm{\widehat{\phi}}_{L^2(\tilde{K})}, \ \ \ &&\forall \widehat{\phi} \in L^2(\widehat{\Omega}), \\
	 \norm{\widehat{{\Pi}}^2_h \widehat{\boldsymbol{S}}}_{\p{L}^2(K)} &&\leq C \norm{\widehat{\boldsymbol{S}}}_{\p{L}^2(\tilde{K})}, \ \ \ &&\forall \widehat{\boldsymbol{S}} \in \p{L}^2(\widehat{\Omega}), \\
 \norm{\widehat{{\Pi}}^3_h\widehat{\boldsymbol{T}}}_{\p{L}^2(K)} &&\leq C \norm{\widehat{\boldsymbol{T}}}_{\p{L}^2(\tilde{K})}, \ \ \ &&\forall \widehat{\boldsymbol{T}} \in \p{L}^2(\widehat{\Omega}),\\
 \norm{\widehat{{\Pi}}^4_h \widehat{\boldsymbol{v}}}_{\p{L}^2(K)} &&\leq C \norm{\widehat{\boldsymbol{v}}}_{\p{L}^2(\tilde{K})}, \ \ \ &&\forall \widehat{\boldsymbol{v}} \in \p{L}^2(\widehat{\Omega}),
	\end{alignat*}
	for a suitable constant $C = C(\p{F})$. Above $\tilde{K}$ stands for the extended support of the mesh element $K$. This means if $K = (\psi_{i_1}^1,\psi_{i_1+1}^1) \times \dots \times (\psi_{i_3}^3,\psi_{i_3+1}^3)$, we find indices $j_l$ with $K = (\xi_{j_1}^1,\xi_{j_1+1}^1) \times \dots \times (\xi_{j_3}^3,\xi_{j_3+1}^3)$ . Then we set $$\tilde{K} \coloneqq (\xi_{j_1-p_1}^1,\xi_{j_1+p_1+1}^1) \times \dots \times (\xi_{j_3-p_3}^3,\xi_{j_3+p_3+1}^3). $$
	
\end{lemma}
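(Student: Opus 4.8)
The plan is to reduce the whole statement to the one-dimensional stability bounds already contained in Lemma \ref{lemma:univariate_spline_proj_prop}. Taking $l=0$ there, which is admissible in all three lines since $p\ge 2$, each univariate building block satisfies a local $L^2$-estimate
\begin{align*}
\norm{\widehat{\Pi}_{p} v}_{L^2(I)} &\le C\,\norm{v}_{L^2(\tilde I)}, \\
\norm{\widehat{\Pi}_{p-1}^{c,1} v}_{L^2(I)} &\le C\,\norm{v}_{L^2(\tilde I)}, \\
\norm{\widehat{\Pi}_{p-2}^{c,2} v}_{L^2(I)} &\le C\,\norm{v}_{L^2(\tilde I)},
\end{align*}
on every knot sub-interval $I$, with $C$ independent of mesh refinement and $\tilde I$ the support extension introduced in Section \ref{section:splines}. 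Note that this $\tilde I$ is precisely the one-dimensional factor of the box $\tilde K$ appearing in the assertion, so no separate bookkeeping of the extension is needed.

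Second, I would carry out the tensor-product reduction. Every scalar block of $\widehat{\Pi}^1_h,\dots,\widehat{\Pi}^4_h$ has the form $P=P_1\otimes P_2\otimes P_3$, where each $P_i$ is one of the three operators above acting only on the $i$-th coordinate. For a mesh element $K=I_1\times I_2\times I_3$ and $v\in L^2(\widehat\Omega)$ one writes $\norm{Pv}_{L^2(K)}^2$ as the iterated integral over $I_1,I_2,I_3$ and applies the one-dimensional estimate successively --- first in the $\zeta_3$-variable with $\zeta_1,\zeta_2$ frozen, then in $\zeta_2$, then in $\zeta_1$ --- each application enlarging only the current integration interval from $I_j$ to $\tilde I_j$. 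This yields
\begin{align*}
\norm{Pv}_{L^2(K)}^2 \le C^6 \int_{\tilde I_1}\!\int_{\tilde I_2}\!\int_{\tilde I_3} |v|^2 \, \textup{d}\zeta_3\,\textup{d}\zeta_2\,\textup{d}\zeta_1 = C^6\,\norm{v}_{L^2(\tilde K)}^2,
\end{align*}
with $\tilde K=\tilde I_1\times\tilde I_2\times\tilde I_3$, which is exactly the extended support in the statement. A density argument (smooth functions are $L^2$-dense and all operators involved are bounded, cf. Section 4.1 in \cite{Buffa2011IsogeometricDD}) makes the iterated application of the $P_i$ rigorous. This already settles the scalar case $\widehat{\Pi}^1_h$.

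Third, the matrix- and vector-valued cases follow entrywise. By the definitions of $\widehat{\Pi}^2_h$, $\widehat{\Pi}^3_h$ and $\widehat{\Pi}^4_h$, each component of the image is obtained by applying one operator $P$ of the above type to a single scalar component of the argument; for $\widehat{\Pi}^2_h$ and $\widehat{\Pi}^3_h$ the \textup{SYM}-, \textup{TR}-pattern merely repeats such components or forms a difference of two of them, which at worst changes constants through the triangle inequality. Hence, using $\norm{\p{M}}_{\p{L}^2(K)}^2=\sum_{i,j}\norm{M_{ij}}_{L^2(K)}^2$ together with the estimate of the second step applied to every entry, one obtains $\norm{\widehat{\Pi}^k_h \widehat{\boldsymbol{w}}}_{\p{L}^2(K)}^2 \le C\,\norm{\widehat{\boldsymbol{w}}}_{\p{L}^2(\tilde K)}^2$ for $k=2,3,4$, with a constant depending only on $p$ (and thus independent of the mesh).

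The only genuinely delicate point, and hence the expected main obstacle, is the tensor-product step: one has to justify carefully --- via Fubini combined with the density argument --- that the local $L^2$-norm of $(P_1\otimes P_2\otimes P_3)v$ over a box is controlled by $\norm{v}_{L^2(\tilde K)}$ with the extended support being exactly the product $\tilde I_1\times\tilde I_2\times\tilde I_3$, so that it matches the $\tilde K$ of the assertion. Everything else --- the reduction to $l=0$ in Lemma \ref{lemma:univariate_spline_proj_prop} and the entrywise summation for the matrix spaces --- is routine. This lemma is the direct analogue of Lemma 5.1 in \cite{Buffa2011IsogeometricDD}, and the argument mirrors the one given there.
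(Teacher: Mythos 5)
Your argument is correct and is exactly the route the paper takes: the paper's own proof is a one-line appeal to the regular mesh assumption, the tensor-product structure of the multivariate projections, and the univariate stability bounds of Lemma \ref{lemma:univariate_spline_proj_prop} with $l=0$, which is precisely what you spell out (coordinate-wise Fubini reduction plus entrywise treatment of the matrix-valued cases). Your version is simply a more detailed write-up of the same proof.
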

\begin{proof}
	The statement follows by the regular mesh assumption, the product structure of the projections onto the multivariate spline spaces and  Lemma  \ref{lemma:univariate_spline_proj_prop}.
\end{proof}
\begin{lemma}{(Approximation property in the parametric domain)}\\
	\label{lemma:approximation_parametric_domain}
Let $p> r \geq 1$. It  holds
\begin{alignat*}{3}
&|\widehat{\phi}-\widehat{{\Pi}}^1_h\widehat{\phi}|_{H^{l}(K)} \leq C \ h^{s-l} \ |\widehat{\phi}|_{\mathcal{H}^s(\tilde{K})} \ , &&0 \leq l \leq s \leq  p+1 , \ &&\forall \widehat{\phi}  \in \widehat{V}^1 \cap \mathcal{Y}_1(H^s(\Omega)),\\
&|\widehat{\boldsymbol{S}}-\widehat{{\Pi}}^2_h\widehat{\boldsymbol{S}}|_{\p{H}^{l}(K)} \leq C \ h^{s-l} \ |\widehat{\boldsymbol{S}}|_{\boldsymbol{\mathcal{H}}^s(\tilde{K})} \ , \hspace{1cm} &&0 \leq l \leq s \leq  p-1 , \ &&\forall \widehat{\boldsymbol{S}}  \in \widehat{\boldsymbol{V}}^2 \cap \mathcal{Y}_2(\p{H}^s(\Omega)),\\
&|\widehat{\boldsymbol{T}}-\widehat{\Pi}^{3}_h\widehat{\boldsymbol{T}}|_{\p{H}^{l}(K)} \leq C \ h^{s-l} \ |\widehat{\boldsymbol{T}}|_{\boldsymbol{\mathcal{H}}^s(\tilde{K})} \ , &&0 \leq l \leq s \leq  p-1 , \ &&\forall \widehat{\boldsymbol{T}}  \in \widehat{\boldsymbol{V}}^3 \cap \mathcal{Y}_3(\p{H}^s(\Omega)),\\
& |\widehat{\boldsymbol{v}}-\widehat{{\Pi}}^4_h\widehat{\boldsymbol{v}}|_{\p{H}^{l}(K)} \leq C \ h^{s-l} \ |\widehat{\boldsymbol{v}}|_{\boldsymbol{\mathcal{H}}^s(\tilde{K})} \ ,&&0 \leq l \leq s \leq  p-1 , \ &&\forall \widehat{\boldsymbol{v}}  \in \widehat{\boldsymbol{V}}^4 \cap \mathcal{Y}_4(\p{H}^s(\Omega)).
\end{alignat*}
\end{lemma}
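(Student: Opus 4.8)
The plan is to reduce all four estimates to univariate spline approximation bounds and then lift them to the cube by exploiting the tensor‑product structure of the projections $\widehat{\Pi}^i_h$, following the line of reasoning of \cite{Buffa2011IsogeometricDD} but now for our three univariate operators $\widehat{\Pi}_p$, $\widehat{\Pi}_{p-1}^{c,1}$, $\widehat{\Pi}_{p-2}^{c,2}$.

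First I would establish the univariate local estimates
\begin{align*}
|v - \widehat{\Pi}_{p} v|_{H^{\alpha}(I)} &\leq C\, h^{\sigma-\alpha}\, |v|_{H^{\sigma}(\tilde I)}, \qquad 0 \leq \alpha \leq \sigma \leq p+1, \\
|v - \widehat{\Pi}_{p-1}^{c,1} v|_{H^{\alpha}(I)} &\leq C\, h^{\sigma-\alpha}\, |v|_{H^{\sigma}(\tilde I)}, \qquad 0 \leq \alpha \leq \sigma \leq p, \\
|v - \widehat{\Pi}_{p-2}^{c,2} v|_{H^{\alpha}(I)} &\leq C\, h^{\sigma-\alpha}\, |v|_{H^{\sigma}(\tilde I)}, \qquad 0 \leq \alpha \leq \sigma \leq p-1,
\end{align*}
with $C$ independent of the mesh size. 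Each is a Bramble--Hilbert / Deny--Lions argument on the support extension $\tilde I$, whose length is comparable to $h$ by the regular-mesh assumption: choose a polynomial $q$ of degree $\leq \sigma-1$ with $|v-q|_{H^{\alpha}(\tilde I)}\leq C h^{\sigma-\alpha}|v|_{H^{\sigma}(\tilde I)}$; since $\sigma-1\leq p$ (resp.\ $p-1$, $p-2$) this $q$ lies in $S_{p}^{r}$ (resp.\ $S_{p-1}^{r-1}$, $S_{p-2}^{r-2}$) and is therefore reproduced by the respective operator, cf.\ \eqref{eq:lemma_uni_spline_prop_0}, \eqref{eq:lemma_uni_spline_prop_1}, \eqref{eq:lemma_uni_spline_prop_4}; writing $v-\widehat{\Pi}v = (v-q)-\widehat{\Pi}(v-q)$ and invoking the univariate $H^{\alpha}$-stability bounds of Lemma~\ref{lemma:univariate_spline_proj_prop} (the unnumbered second line, \eqref{eq:lemma_uni_spline_prop_3}, \eqref{eq:lemma_uni_spline_prop_6}) bounds both summands by $|v-q|_{H^{\alpha}(\tilde I)}$. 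The hypothesis $p>r\geq 1$ ensures $p\geq 2$ and that the reduced spline spaces above make sense.

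Next I would tensorize, treating $\widehat{\Pi}^1_h$ first. Writing $\widehat{\Pi}^1_h = \pi_1\pi_2\pi_3$, with $\pi_i$ the copy of $\widehat{\Pi}_{p}$ acting on the $i$-th variable, use
\begin{align*}
\textup{id} - \pi_1\pi_2\pi_3 = (\textup{id}-\pi_1) + \pi_1(\textup{id}-\pi_2) + \pi_1\pi_2(\textup{id}-\pi_3),
\end{align*}
expand $|\widehat\phi - \widehat{\Pi}^1_h\widehat\phi|_{H^l(K)}^2$ over $K = I_1\times I_2\times I_3$ into the mixed $L^2$-norms $\|\partial^{\boldsymbol\beta}(\cdot)\|_{L^2(K)}^2$ with $|\boldsymbol\beta|=l$, and bound each telescoping term separately. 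In the term $\pi_1\cdots\pi_{j-1}(\textup{id}-\pi_j)$ the operators $\pi_i$ with $i\neq j$ act on disjoint variables, hence commute with $\partial_j$ and with each other; peeling off their univariate $H^{\beta_i}$-stability bounds in those directions leaves the purely univariate error $\|\partial_j^{\beta_j}(\textup{id}-\pi_j)g\|_{L^2(I_j)}$ of the function $g$ obtained from $\widehat\phi$ by differentiating $\beta_i$ times in every direction $i\neq j$. The univariate estimate above with regularity index $\sigma_j := s-\sum_{i\neq j}\beta_i$ (admissible since $\beta_j\leq\sigma_j\leq s\leq p+1$) then produces the factor $h^{\sigma_j-\beta_j}=h^{s-l}$ times a mixed derivative of $\widehat\phi$ of total order $s$, while the stabilities used in the other directions merely enlarge the domain of integration to (a subset of) $\tilde K$. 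Summing over $\boldsymbol\beta$ and over the mesh cells contained in $\tilde K$ — which is a product of univariate support extensions — reconstructs $|\widehat\phi|_{\mathcal{H}^s(\tilde K)}$ on the right, giving the first inequality.

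For $\widehat{\Pi}^2_h,\widehat{\Pi}^3_h,\widehat{\Pi}^4_h$ the same reasoning applies componentwise: each matrix- or vector-entry is a tensor product of the three univariate operators, the bound for a single entry is obtained exactly as above, and the matrix-/vector-valued estimate is the finite sum over the entries — the hypotheses $\widehat{\boldsymbol{S}}\in\mathcal{Y}_2(\p{H}^s(\Omega))$ etc.\ being precisely what guarantees the right-hand sides are finite. The one difference is that every entry contains at least one factor $\widehat{\Pi}^{c,1}_{p-1}$ or $\widehat{\Pi}^{c,2}_{p-2}$, whose univariate estimate caps $\sigma$ at $p$, respectively $p-1$, so the admissible range drops to $0\leq l\leq s\leq p-1$. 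I expect the main obstacle to be exactly this index bookkeeping: for every telescoping summand and every mixed multi-index $\boldsymbol\beta$ one has to pick a univariate regularity index that simultaneously produces the power $h^{s-l}$ and stays within the admissible window ($\sigma\leq p+1$, $p$ or $p-1$, depending on which univariate operator occupies that slot — precisely the mechanism behind the different $s$-ranges of $\widehat{\Pi}^1_h$ versus $\widehat{\Pi}^2_h,\widehat{\Pi}^3_h,\widehat{\Pi}^4_h$), and one must verify that summing the element-local contributions over the cells meeting $\tilde K$ reassembles the broken semi-norm $|\cdot|_{\mathcal{H}^s(\tilde K)}$ rather than something larger. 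This is routine but error-prone, and is where the proof must be written out carefully.
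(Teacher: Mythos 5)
Your argument is correct, but it follows a genuinely different route from the paper's. The paper does not tensorize univariate error estimates directly: it invokes an off-the-shelf multivariate spline approximation result (Lemma~3.1 in \cite{IGA3}) to produce, componentwise, a spline $\widehat{\boldsymbol{M}}$ in the correct reduced-degree space with $|\widehat{\boldsymbol{M}}-\widehat{\boldsymbol{T}}|_{\boldsymbol{\mathcal{H}}^l(\tilde K)}\leq C\,h^{s-l}|\widehat{\boldsymbol{T}}|_{\boldsymbol{\mathcal{H}}^s(\tilde K)}$, then writes $\widehat{\boldsymbol{T}}-\widehat{\Pi}^3_h\widehat{\boldsymbol{T}}=(\widehat{\boldsymbol{T}}-\widehat{\boldsymbol{M}})-\widehat{\Pi}^3_h(\widehat{\boldsymbol{T}}-\widehat{\boldsymbol{M}})$ using the reproduction property $\widehat{\Pi}^3_h\widehat{\boldsymbol{M}}=\widehat{\boldsymbol{M}}$, and controls the second term by the $L^2$-stability of Lemma~\ref{lemma:stability_projections_param} combined with an inverse inequality $|\cdot|_{\p{H}^l(K)}\leq C h^{-l}\norm{\cdot}_{\p{L}^2(K)}$ to climb back up to the $H^l$ seminorm. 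Your proof instead rebuilds the multivariate estimate from scratch: univariate Bramble--Hilbert plus reproduction plus local stability for each of $\widehat{\Pi}_{p},\widehat{\Pi}^{c,1}_{p-1},\widehat{\Pi}^{c,2}_{p-2}$, then the telescoping identity over the three coordinate directions. What the paper's route buys is that all anisotropic index bookkeeping is delegated to the cited lemma and only $L^2$-stability of the multivariate projector is needed, at the price of an inverse estimate; your route avoids both the external reference and the inverse estimate, uses the full $H^l$-stability already recorded in Lemma~\ref{lemma:univariate_spline_proj_prop}, and makes the origin of the different admissible ranges ($s\leq p+1$ versus $s\leq p-1$) completely transparent. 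One point you should make explicit when writing it out: combining the \emph{local} Bramble--Hilbert polynomial on $\tilde I$ with the \emph{global} spline-reproduction identities \eqref{eq:lemma_uni_spline_prop_0}, \eqref{eq:lemma_uni_spline_prop_1}, \eqref{eq:lemma_uni_spline_prop_4} requires the locality of the projectors, i.e.\ that $\widehat{\Pi}v$ restricted to $I$ depends only on $v$ restricted to $\tilde I$; this is implicit in the local stability bounds you cite, but it is the hinge on which the whole univariate step turns.
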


\begin{proof}
	The proof is completely analogous to the one of Lemma 5.1 in \cite{Buffa2011IsogeometricDD}.  We only show the assertion for the third inequality as an example since all the other inequalities follow with similar arguments. Let now $ \in \widehat{\boldsymbol{T}}  \in \widehat{\boldsymbol{V}}^3 \cap \mathcal{Y}_3(\p{H}^s(\Omega))$. Then, due to the smoothness of  $\p{F}$, we have $\widehat{\boldsymbol{T}}  \in \widehat{\boldsymbol{V}}^3 \cap \p{H}^s(\widehat{\Omega})$. Further let $0 \leq l \leq s \leq p-1$. In view of Lemma 3.1 in \cite{IGA3} we find  splines $\widehat{s}_1 \in S_{p-1,p-1,p-1}^{r-1,r-1,r-1}, \ \widehat{s}_2 \in S_{p-2,p,p-1}^{r-2,r,r-1}, \ \widehat{s}_3 \in S_{p-2,p-1,p}^{r-2,r-1,r}, \ \widehat{s}_4 \in S_{p,p-2,p-1}^{r,r-2,r-1}, \ \widehat{s}_5 \in S_{p-1,p-1,p-1}^{r-1,r-1,r-1}, \ \widehat{s}_6 \in S_{p-1,p-2,p}^{r-1,r-2,r},$ $ \ \widehat{s}_7 \in S_{p,p-1,p-2}^{r,r-1,r-2},  \ \widehat{s}_8 \in S_{p-1,p,p-2}^{r-1,r,r-2}, \ \widehat{s}_9 \in S_{p-1,p-1,p-1}^{r-1,r-1,r-1}$ with
	\begin{align}
	\label{eq:componentwise_estimate}
	|\widehat{s}_i - \widehat{T}(i)|_{\mathcal{H}^l(\tilde{K})} \leq C \ h^{s-l} |\widehat{T}(i)|_{\mathcal{H}^s(\tilde{K})}, \ \ \forall i, \ \ \ \widehat{\boldsymbol{T}} \eqqcolon \textup{MAT}(\widehat{T}(1),\dots ,\widehat{T}(9)).
	\end{align}
	Here we used the regular mesh assumption. This implies directly\\ $| \widehat{\boldsymbol{M}}  - \widehat{\boldsymbol{T}}|_{\boldsymbol{\mathcal{H}}^l(\tilde{K})} \leq C \ h^{s-l} |\widehat{\boldsymbol{T}}|_{\boldsymbol{\mathcal{H}}^s(\tilde{K})}, \ $ where $\widehat{\boldsymbol{M}} \coloneqq \textup{MAT}(\widehat{s}_1, \dots , \widehat{s}_9)$.
	Triangle inequality and the spline preserving property of the projections  lead to 
	\begin{align}
	\label{eq:Matrix_trinagle_est}
	|\widehat{\boldsymbol{T}}-\widehat{\Pi}^3_h\widehat{\boldsymbol{T}}|_{\p{H}^l(K)} \leq |\widehat{\boldsymbol{T}}-\widehat{\boldsymbol{M}}|_{\p{{H}}^l(K)} + | \widehat{\Pi}^3_h \big(\widehat{\boldsymbol{M}} - \widehat{\boldsymbol{T}} \big)|_{\p{{H}}^l(K)}.
	\end{align} The last term on the right-hand side can be estimated by means of the stability result for the projections (see Lemma \ref{lemma:stability_projections_param}) and a standard inverse estimate for polynomials. More precisely,
	\begin{align*}
	 | \widehat{\Pi}^3_h \big(\widehat{\boldsymbol{M}} - \widehat{\boldsymbol{T}} \big)|_{\p{H}^l(K)}  \leq C \, h^{-l}  \norm{ \widehat{\Pi}^3_h \big(\widehat{\boldsymbol{M}} - \widehat{\boldsymbol{T}} \big)}_{\p{L}^2({K})}  \leq C \, h^{-l}  \norm{ \widehat{\boldsymbol{M}} - \widehat{\boldsymbol{T}} }_{\p{L}^2(\tilde{K})}.
	\end{align*}
	The combination of the last estimate with the two lines \eqref{eq:componentwise_estimate} and \eqref{eq:Matrix_trinagle_est} leads to the third inequality in the assertion. All the other estimates are obtained similarly.
\end{proof}
Finally we can state the approximation properties of the projections $\Pi^i_h$ in the physical domain.

\begin{theorem}{(Approximation in the physical domain)}\\
	\label{theorem:approximation in the physical domain}
	There exist constants $C$ not depending on the mesh size, refinement respectively, such that
	\begin{alignat*}{3}
	& \norm{{\phi}-\Pi^1_h{\phi}}_{H^{l}(\Omega)} \leq C \ h^{s-l} \ \norm{{\phi}}_{{H}^s(\Omega)}, &&0 \leq l \leq s \leq  p+1 , \ &&\forall {\phi}  \in {V}^1 \cap H^s(\Omega),\\
	& \norm{{\p{S}}-\Pi^2_h{\p{S}}}_{\p{H}^{l}(\Omega)} \leq C \ h^{s-l} \ \norm{{\p{S}}}_{\boldsymbol{{H}}^s(\Omega)}, \hspace{0.5cm}&&0 \leq l \leq s \leq  p-1 , \ &&\forall {\p{S}}  \in \boldsymbol{V}^2 \cap \p{H}^s(\Omega),\\
	& \norm{{\p{T}}-\Pi^3_h{\p{T}}}_{\p{H}^{l}(\Omega)} \leq C \ h^{s-l} \ \norm{{\p{T}}}_{\boldsymbol{{H}}^s(\Omega)},&&0 \leq l \leq s \leq  p-1 , \ &&\forall {\p{T}}  \in \boldsymbol{V}^3 \cap \p{H}^s(\Omega), \\
	& \norm{{\boldsymbol{v}}-\Pi^4_h{\boldsymbol{v}}}_{\p{H}^{l}(\Omega)} \leq C \ h^{s-l} \ \norm{{\boldsymbol{v}}}_{\boldsymbol{{H}}^s(\Omega)},&&0 \leq l \leq s \leq  p-1 , \ &&\forall {\boldsymbol{v}}  \in \boldsymbol{V}^4 \cap \p{H}^s(\Omega).
	\end{alignat*}
\end{theorem}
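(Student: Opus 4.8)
The plan is to transfer everything to the parametric cube, where Lemma~\ref{lemma:approximation_parametric_domain} already supplies element-wise approximation estimates, and then to translate the resulting bounds back to $\Omega$ by means of the transformations $\mathcal{Y}_i$ together with the norm equivalences of Lemma~\ref{lemma:regularity_transformation}. Since $\p{F}$ is affine linear, every physical mesh element $\mathcal{K}\in\mathcal{M}$ is the image $\p{F}(K)$ of exactly one parametric element $K\in\widehat{M}$, so summing element-wise estimates over $\mathcal{M}$ amounts to summing over $\widehat{M}$. I would carry out the argument in detail for $\phi\in V^1\cap H^s(\Omega)$ and then observe that the three remaining lines are verbatim copies with $(\mathcal{Y}_1,\widehat{\Pi}^1_h,\Pi^1_h)$ replaced by $(\mathcal{Y}_i,\widehat{\Pi}^i_h,\Pi^i_h)$, $i=2,3,4$, and $H^s(\Omega)$ replaced by the appropriate bold Sobolev space.

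Concretely, set $\widehat{\phi}\coloneqq\mathcal{Y}_1(\phi)$. Since $\Pi^1_h=\mathcal{Y}_1^{-1}\circ\widehat{\Pi}^1_h\circ\mathcal{Y}_1$ we have $\mathcal{Y}_1(\phi-\Pi^1_h\phi)=\widehat{\phi}-\widehat{\Pi}^1_h\widehat{\phi}$, and by the smoothness of $\p{F}$ and Lemma~\ref{lemma:regularity_transformation} one gets $\widehat{\phi}\in\widehat{V}^1\cap H^s(\widehat{\Omega})=\widehat{V}^1\cap\mathcal{Y}_1(H^s(\Omega))$, so Lemma~\ref{lemma:approximation_parametric_domain} applies. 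Fixing a parametric element $K$ with image $\mathcal{K}$, using the lower bound of Lemma~\ref{lemma:regularity_transformation} on $\mathcal{K}$, expanding the $H^l(K)$-norm into semi-norms of order $0\le m\le l$, and applying Lemma~\ref{lemma:approximation_parametric_domain} to each of them yields
\begin{align*}
\norm{\phi-\Pi^1_h\phi}_{H^l(\mathcal{K})}^2 &\le C\,\norm{\widehat{\phi}-\widehat{\Pi}^1_h\widehat{\phi}}_{H^l(K)}^2 = C\sum_{m=0}^{l}\big|\widehat{\phi}-\widehat{\Pi}^1_h\widehat{\phi}\big|_{H^m(K)}^2 \\
&\le C\sum_{m=0}^{l} h^{2(s-m)}\,\big|\widehat{\phi}\big|_{\mathcal{H}^s(\tilde{K})}^2 \le C\, h^{2(s-l)}\,\big|\widehat{\phi}\big|_{\mathcal{H}^s(\tilde{K})}^2 ,
\end{align*}
where in the last step I absorbed $\sum_{m=0}^{l}h^{2(s-m)}\le C\,h^{2(s-l)}$ into the constant (using an \emph{a priori} upper bound on the mesh size). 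Summing over $\mathcal{K}\in\mathcal{M}$ and using $\norm{\phi-\Pi^1_h\phi}_{H^l(\Omega)}^2=\sum_{\mathcal{K}}\norm{\phi-\Pi^1_h\phi}_{H^l(\mathcal{K})}^2$ gives $\norm{\phi-\Pi^1_h\phi}_{H^l(\Omega)}^2\le C\,h^{2(s-l)}\sum_{K\in\widehat{M}}|\widehat{\phi}|_{\mathcal{H}^s(\tilde{K})}^2$. The remaining point is that each parametric element lies in the support extension $\tilde{K}$ of only a uniformly bounded number of elements $K$ (of order $p^3$, independent of the mesh, by the local tensor-product structure of B-splines and the regular-mesh Assumption), so $\sum_{K}|\widehat{\phi}|_{\mathcal{H}^s(\tilde{K})}^2\le C\,|\widehat{\phi}|_{\mathcal{H}^s(\widehat{\Omega})}^2=C\,|\widehat{\phi}|_{H^s(\widehat{\Omega})}^2$, the last equality because $\widehat{\phi}$ is globally in $H^s(\widehat{\Omega})$. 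A final application of Lemma~\ref{lemma:regularity_transformation} (summed over elements, or directly from the affinity of $\p{F}$) gives $|\widehat{\phi}|_{H^s(\widehat{\Omega})}\le\norm{\widehat{\phi}}_{H^s(\widehat{\Omega})}\le C\,\norm{\phi}_{H^s(\Omega)}$, and combining the chain yields the first estimate.

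For $i=2,3,4$ I would proceed identically after first checking that $\mathcal{Y}_i$ maps $\p{V}^i\cap\p{H}^s(\Omega)$ into $\widehat{\p{V}}^i\cap\mathcal{Y}_i(\p{H}^s(\Omega))$ --- this uses the commuting diagram of Lemma~\ref{lemma:1} and the symmetry/trace-preservation of $\mathcal{Y}_2,\mathcal{Y}_3$, plus the smoothness of $\p{F}$ --- then transporting the error element-wise via Lemma~\ref{lemma:regularity_transformation}, invoking the matching line of Lemma~\ref{lemma:approximation_parametric_domain}, summing over the mesh with the bounded-overlap property of the extended supports, and transporting back. I expect the only genuinely delicate point to be this bounded-overlap bookkeeping for the support extensions $\tilde{K}$ (together with the observation that the broken semi-norm $|\widehat{\phi}|_{\mathcal{H}^s(\widehat{\Omega})}$ coincides with the ordinary semi-norm since $\widehat{\phi}\in H^s(\widehat{\Omega})$ globally); everything else is the routine ``pull back, estimate element-wise, push forward'' pattern already used for the de Rham complex in \cite{Buffa2011IsogeometricDD}.
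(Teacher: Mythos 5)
Your proposal is correct and follows essentially the same route as the paper's proof: pull back via $\mathcal{Y}_i$, use the norm equivalence of Lemma~\ref{lemma:regularity_transformation}, expand into semi-norms and apply Lemma~\ref{lemma:approximation_parametric_domain} element-wise, then sum over the mesh and push forward. You in fact spell out the summation and the bounded-overlap argument for the support extensions $\tilde{K}$ more explicitly than the paper, which compresses this into ``the repeated application of Lemma~\ref{lemma:regularity_transformation}.''
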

\begin{proof}
	The proof is nearly the same as the one for Theorem 5.3. in \cite{Buffa2011IsogeometricDD} . For reasons of completeness we show the assertion for the third estimate. Let $K \in \widehat{{M}}$ be a mesh element and $\p{T} \in \boldsymbol{V}^3 \cap \p{H}^s(\Omega)$. Set $\widehat{\boldsymbol{T}} \coloneqq \mathcal{Y}_3(\p{T})$. Lemma \ref{lemma:regularity_transformation} yields for some constant $C$ the inequality
	\begin{align}
	\label{eq:approx_prrof_physical_1}
	\norm{\p{T}-\Pi^3_h\p{T}}_{\p{H}^l(\mathcal{K})} \leq C \, \norm{\widehat{\boldsymbol{T}}-\widehat{\Pi}^3_h\widehat{\boldsymbol{T}}}_{\p{H}^l(K)}.
	\end{align}
	Above we wrote $\mathcal{K} = \p{F}(K)$ and note that for $\p{T} \in \p{H}^s(\Omega)$ it is $\widehat{\boldsymbol{T}} \in \p{H}^s(\widehat{\Omega})$. Thus Lemma \ref{lemma:approximation_parametric_domain} gives $$\norm{\widehat{\boldsymbol{T}}-\widehat{\Pi}^3_h\widehat{\boldsymbol{T}}}_{\p{H}^l(K)} \leq C \, \sum_{i=0}^{l} |\widehat{\boldsymbol{T}}-\widehat{\Pi}^3_h\widehat{\boldsymbol{T}}|_{\p{H}^i(K)}  \leq   C \, h^{s-l} |\widehat{\boldsymbol{T}}|_{\boldsymbol{\mathcal{H}}^l(\tilde{K})}.$$
	The repeated application of Lemma \ref{lemma:regularity_transformation} , together with \eqref{eq:approx_prrof_physical_1}, leads to the third inequality of the assertion. All the other approximation estimates can be proven in a similar fashion.
\end{proof}

Exploiting the commutativity rules for the differential operators and projections we obtain a modified approximation estimate  which is useful for example in the subsequent section.

\begin{corollary}
	\label{corollary}
Let $1 \leq s \leq p-1$. Then, assuming sufficient regularity, it is
	\begin{alignat*}{3}
& \norm{{\phi}-\Pi^1_h{\phi}}_{H^{2}(\Omega)} \leq C \ h^{s} \ \norm{{\phi}}_{H^{s+2}(\Omega)}, \\
& \norm{{\p{S}}-\Pi^2_h{\p{S}}}_{\p{H}(\Omega,\textup{curl})} \leq C \ h^{s} \ \norm{{\p{S}}}_{\p{H}^s(\Omega,\textup{curl})}, \hspace{0.5cm}\\
& \norm{{\p{T}}-\Pi^3_h{\p{T}}}_{\p{H}(\Omega,\textup{div})} \leq C \ h^{s} \ \norm{{\p{T}}}_{\p{H}^s(\Omega,\textup{div})},\\
& \norm{{\boldsymbol{v}}-\Pi^4_h{\boldsymbol{v}}}_{\p{L}^{2}(\Omega)} \leq C \ h^{s} \ \norm{{\boldsymbol{v}}}_{\boldsymbol{H}^s(\Omega)}.
\end{alignat*}
We use the notation  $\norm{{\p{S}}}_{\p{H}^s(\Omega,\textup{curl})}^2 \coloneqq \norm{ {\p{S}}}_{\p{H}^s(\Omega)}^2 + \norm{ \nabla \times {\p{S}}}_{\p{H}^s(\Omega)}^2$ and $\norm{{\p{T}}}_{\p{H}^s(\Omega,\textup{div})}^2 \coloneqq \norm{ {\p{T}}}_{\p{H}^s(\Omega)}^2 + \norm{ \nabla \cdot {\p{T}}}_{\p{H}^s(\Omega)}^2$.	
\end{corollary}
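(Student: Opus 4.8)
The plan is to combine the global approximation estimates of Theorem \ref{theorem:approximation in the physical domain} with the commuting-diagram relations collected in Fig.\ \ref{fig:commuting_digram_3}, so that the \emph{derivative} part of each graph norm is absorbed by the projection one level higher up in the complex. All four inequalities of Theorem \ref{theorem:approximation in the physical domain} are already stated globally on $\Omega$, so no summation over mesh elements is needed.

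First I would dispose of the two estimates that involve no commutativity. For the scalar case, applying the first inequality of Theorem \ref{theorem:approximation in the physical domain} with the regularity index $s+2$ in place of $s$ and with $l=2$ gives $\norm{\phi-\Pi^1_h\phi}_{H^2(\Omega)}\le C\,h^{s}\,\norm{\phi}_{H^{s+2}(\Omega)}$, where the admissibility condition $2\le s+2\le p+1$ is exactly the hypothesis $0\le s\le p-1$. The last estimate of the corollary is simply the fourth inequality of Theorem \ref{theorem:approximation in the physical domain} with $l=0$.

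For the $\p{H}(\Omega,\textup{curl})$-estimate I would split the graph norm as $\norm{\p{S}-\Pi^2_h\p{S}}_{\p{H}(\Omega,\textup{curl})}^2=\norm{\p{S}-\Pi^2_h\p{S}}_{\p{L}^2(\Omega)}^2+\norm{\nabla\times(\p{S}-\Pi^2_h\p{S})}_{\p{L}^2(\Omega)}^2$. The first summand is bounded by $C\,h^{s}\norm{\p{S}}_{\p{H}^s(\Omega)}$ via the second inequality of Theorem \ref{theorem:approximation in the physical domain} with $l=0$. For the second summand, the commutativity $\nabla\times\Pi^2_h\p{S}=\Pi^3_h(\nabla\times\p{S})$ from Fig.\ \ref{fig:commuting_digram_3} rewrites it as $\norm{\nabla\times\p{S}-\Pi^3_h(\nabla\times\p{S})}_{\p{L}^2(\Omega)}$; since $\nabla\times\p{S}\in\boldsymbol{V}^3$ (this is just the fact that the Hessian complex is a complex) and is assumed to lie in $\p{H}^s(\Omega)$, the third inequality of Theorem \ref{theorem:approximation in the physical domain} with $l=0$ bounds it by $C\,h^{s}\norm{\nabla\times\p{S}}_{\p{H}^s(\Omega)}$. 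Adding the two contributions and using $a+b\le\sqrt 2\,(a^2+b^2)^{1/2}$ produces the asserted bound in terms of $\norm{\p{S}}_{\p{H}^s(\Omega,\textup{curl})}$. The $\p{H}(\Omega,\textup{div})$-estimate is handled in exactly the same fashion, now splitting off $\norm{\nabla\cdot(\p{T}-\Pi^3_h\p{T})}_{\p{L}^2(\Omega)}$, invoking $\nabla\cdot\Pi^3_h\p{T}=\Pi^4_h(\nabla\cdot\p{T})$ together with the fourth inequality of Theorem \ref{theorem:approximation in the physical domain} applied to $\nabla\cdot\p{T}\in\boldsymbol{V}^4$.

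There is no genuine obstacle here; the proof is essentially bookkeeping. The only points that need attention are: keeping the shifted index $s+2$ inside the admissible range $[0,p+1]$ of Theorem \ref{theorem:approximation in the physical domain} in the scalar case, verifying that the differentiated objects $\nabla\times\p{S}$ and $\nabla\cdot\p{T}$ indeed land in the next space $\boldsymbol{V}^3$, resp.\ $\boldsymbol{V}^4$, and reading the phrase ``sufficient regularity'' in the statement as precisely the hypotheses $\nabla\times\p{S}\in\p{H}^s(\Omega)$ and $\nabla\cdot\p{T}\in\p{H}^s(\Omega)$ that make Theorem \ref{theorem:approximation in the physical domain} applicable to them.
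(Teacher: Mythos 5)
Your proposal is correct and follows essentially the same route as the paper: split each graph norm, use the commutation relations $\nabla\times\Pi^2_h=\Pi^3_h(\nabla\times\cdot)$ and $\nabla\cdot\Pi^3_h=\Pi^4_h(\nabla\cdot\cdot)$ to push the derivative terms one level up the complex, and then invoke Theorem \ref{theorem:approximation in the physical domain} with $l=0$ (and with the shifted index $s+2$, $l=2$ in the scalar case). The paper only writes out the $\p{H}(\Omega,\textup{curl})$ case explicitly, and your computation for that case matches it line for line.
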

\begin{proof}
	The estimates are a consequence of Theorem \ref{theorem:approximation in the physical domain} . We show the assertion only for the second line. The rest can be derived applying an analogous procedure.
	\begin{align*}
	\norm{{\p{S}}-\Pi^2_h{\p{S}}}_{\p{H}(\Omega,\textup{curl})}^2 &= \norm{{\p{S}}-\Pi^2_h{\p{S}}}_{\p{L}^2(\Omega)}^2 + \norm{\nabla \times {\p{S}}-\nabla \times \Pi^2_h{\p{S}}}_{\p{L}^2(\Omega)}^2 \\& =\norm{{\p{S}}-\Pi^2_h{\p{S}}}_{\p{L}^2(\Omega)}^2 + \norm{\nabla \times {\p{S}}- \Pi^3_h(\nabla \times {\p{S}})}_{\p{L}^2(\Omega)}^2 \\ 
	& \leq C \ \big( h^{2s} \, \norm{\p{S}}_{\p{{H}}^s(\Omega)}^2 + h^{2s} \norm{\nabla \times \p{S}}_{\p{{H}}^s(\Omega)}^2 \big) \leq C \ h^{2s} \ \norm{\p{S}}_{\p{H}^s(\Omega, \textup{curl})}^2.
	\end{align*}
 \end{proof}
After we focused on standard approximation inequalities for the introduced discrete spaces, we move on with examples for which such a FEEC based discretization is helpful.

\section{Applications}
In this part we show how one can apply the structure-preserving discretization to set up numerical schemes, following the results of FEEC (see Chapter 5 in \cite{ArnoldBook}),  for  different differential equations. Exploiting FEEC we will obtain  relatively easy stability and convergence statements. \\
We look at the case of the Hodge-Laplacians corresponding to the above studied Hessian complex. Thus let us first shortly define the class of the Hodge-Laplace equation. For more information on that we refer to \cite{ArnoldBook} and \cite{Arnold2010FiniteEE}.¸
\subsection{Hodge-Laplace equation}
For a given closed Hilbert-complex  $(W^k,d^k)$ with a domain complex $(V^k,d^k)$ we call the problem 
\begin{align}
\label{eq:Hodge_Lplaciab_level_k}
 \big( d_{k+1}^* d^k + d^{k-1} d_k^*)u = f
\end{align} (abstract) Hodge-Laplacian of level $k$, where $f \in W^k$ is some given mapping. Clearly, the operator $L^k \coloneqq d_{k+1}^* d^k + d^{k-1} d_k^*$ on the left-hand side has the domain $$D(L^k) = \{v \in V^k \cap {V_k^*} \ | \ d^kv \in{V_{k+1}^*}, \ d_k^*v \in V^{k-1} \}$$ and one requires $u \in D(L^k)$. %Here the $V_k^*$ stand for the domains of the dual complex.
Furthermore we assume from now on, in view of the Hessian complex from above, that the domain complex is exact.
Then, Theorem 4.8 in \cite{ArnoldBook} gives us the well-posedness of the problem \eqref{eq:Hodge_Lplaciab_level_k}.
For the computation of approximate solutions for the Hodge-Laplacian of level $k$ it is useful to introduce an equivalent (see Theorem 4.7 in \cite{ArnoldBook}) mixed weak formulation. It reads

\begin{alignat}{3}
\label{eq:mixed_weak_form}
\textup{Find \ }\ \sigma \in V^{k-1}, \ u \in V^k \ \textup{with} \nonumber\\
\langle \sigma, \tau \rangle - \langle u, d^{k-1} \tau \rangle  &= 0, \ &&\forall \tau  \in V^{k-1}, \\
\langle d^{k-1} \sigma,v \rangle + \langle d^ku,  d^kv \rangle  &= \langle f , v \rangle, \hspace{0.8cm} &&\forall v \in V^{k} \ . \nonumber
\end{alignat} 
The connection between the original formulation and the mixed weak form is given by $d_k^*u = \sigma$. Due to readability we neglected the indices $W^k$ for inner product brackets.\\   
The basic idea for a numerical method of the latter problem is to use test functions from  finite-dimensional subspaces $V_h^k \subset V^k$ instead of taking  the whole spaces $V^k$ into account. As in the case of spline spaces the index $h$ indicates refinement, dimension increase of the subspaces respectively, meaning for $h \rightarrow 0$ the dimension of the subspaces grow to infinity.
The work from  Arnold guides us to stable and convergent numerical methods, by requiring three key properties for the subspaces. In view of Theorems 4.8, 4.9 and 5.5 as well as  chapter 5 of \cite{ArnoldBook} we summarize the mentioned properties and results in the next Theorem.
\begin{theorem}{(Structure-preserving discretization of the Hodge-Laplacian)}\\
	\label{theorem:FEEC}
	Let $(W^k,d^k)$ be a closed Hilbert complex with exact domain complex $(V^k,d^k)$. Further let the next three properties be fulfilled:
	\begin{itemize}
		\item "Approximation property:" For $h \rightarrow 0$ we have $\underset{v \in V^i_h}{\inf}\norm{w-v}_{V^i} \rightarrow 0$, \ for all $w \in V^i$.
		\item "Subcomplex property": The subspaces form a subcomplex in the sense $d^{k-1}V_h^{k-1} \subset V_h^k, \ \  d^kV_h^k \subset V_h^{k+1}$. 
		\item "Bounded cochain projections": There exist $W^i$-bounded projections $ \Pi_h^{i} \colon V^{i} \rightarrow V_h^{i}$, which commute with the operators $d^i$. In other words the next diagram commutes.  	 \begin{figure}[h!]
			\begin{center}
				\begin{tikzpicture}
				\node at (0,1.4) {$V^{k-1}$};
				\node at (0,-0.4) {${V}^{k-1}_h$};
				\node at (4,1.4) {${V}^k$};
				\node at (4,-0.4) {${{V}}_h^k$};
				
				\node at (8.5,1.4) {${V}^{k+1}$};
				\node at (8.5,-0.4) {${{V}}_h^{k+1}$};
				
				\node at (9.5,-0.6) {$.$};
				
				\node at (-0.5,0.5) {$\Pi^{k-1}_h$};	
				
				\node at (3.7,0.5) {$\Pi^k_h$};	
				\node at (8,0.5) {$\Pi^{k+1}_h$};

				\node at (1.9,1.7) {$d^{k-1}$};
				
				\node at (1.9, -0.1) {$d^{k-1}$};
				\node at (6.2,1.7) {$d^k $};
				\node at (6.2,-0.1) {$d^k $};

				% \node at (8.7,0.5) {$\mathcal{Y}_3$};	
				\draw[->] (0,1) to (0,0);
				\draw[->] (4,1) to (4,0);
				\draw[->] (8.5,1) to (8.5,0);
				
				\draw[->] (1,1.4) to (2.5,1.4);
				\draw[->] (1,-0.4) to (2.5,-0.4);
				
				\draw[->] (5.5,-0.4) to (7,-0.4);
				
				\draw[->] (5.5,1.4) to (7,1.4);

				%	\draw[->] (9,1) to (9,0);
				\end{tikzpicture}
			\end{center}
		%	\caption{The diagram commutes.} 
		\end{figure}
		
	\end{itemize}

Then the discrete formulation of \eqref{eq:mixed_weak_form}, i.e.
\begin{equation}
\begin{alignedat}{3}
\label{eq:discrete_mixed_weak}
\textup{Find \ }\ \sigma_h \in V_h^{k-1}, \ u_h \in V_h^k \ \textup{with}  \\ 
\langle \sigma_h, \tau_h \rangle - \langle u_h, d^{k-1} \tau_h \rangle  &= 0, \ &&\forall \tau_h  \in V^{k-1}_h, \\ 
\langle d^{k-1} \sigma_h,v_h \rangle + \langle d^ku_h,  d^kv_h \rangle  &= \langle f , v_h \rangle, \hspace{0.8cm} &&\forall v_h \in V_h^{k},
\end{alignedat} 
\end{equation}

has a unique solution and satisfies an inf-sup stability criterion, namely

$$ \underset{0  \neq (\sigma_h,u_h) \in X_h^k }{\inf} \ \ \  \underset{0  \neq (\tau_h,v_h) \in X_h^k}{\sup} \  \frac{B(\sigma_h,u_h;\tau_h,v_h)}{\norm{(\sigma_h,u_h)}_{X^k} \, \norm{(\tau_h,v_h)}_{X^k}} \geq \gamma > 0, $$
with $ B(\sigma_h,u_h;\tau_h,v_h) \coloneqq\langle \sigma_h, \tau_h \rangle - \langle u_h, d^{k-1} \tau_h \rangle + \langle d^{k-1} \sigma_h,v_h \rangle + \langle d^ku_h,  d^kv_h \rangle$,\\ $X_h^k = V^{k-1}_h \times V^k_h$ and $\norm{(\tau_h,v_h)}_{X^k} \coloneqq \norm{\tau_h}_{V^{k-1}} + \norm{v_h}_{V^{k}}$.\\
Further, the discrete solution ($\sigma_h,u_h$) converges to the exact  ($\sigma$, $u$) one and it holds
$$ \norm{\sigma-\sigma_h}_{V^{k-1}} + \norm{u-u_h}_{V^k} \leq C \, \big(  \underset{\tau \in V^{k-1}_h}{\inf}\norm{\tau-\sigma}_{V^{k-1}} + \underset{v \in V^k_h}{\inf}\norm{u-v}_{V^{k}} \big), $$
where $C$ is a constant independent of $h$.
\end{theorem}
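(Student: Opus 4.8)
The plan is to recognize the statement as a direct instance of the abstract finite element exterior calculus machinery of Arnold, Falk and Winther and to check that the three bulleted hypotheses are exactly the data that machinery needs. First I would record that, because the domain complex is assumed exact, the space of harmonic forms is trivial, so \eqref{eq:mixed_weak_form} really is the reduced mixed formulation equivalent to the strong Hodge--Laplace problem with $d_k^* u = \sigma$, and its well-posedness is Theorem 4.7/4.8 of \cite{ArnoldBook}. The remaining work is to transfer this to the discrete level with constants that do not depend on $h$.

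The key structural step is to show that the discrete complex $(V_h^\bullet, d^\bullet)$ is itself exact. Take $v_h \in V_h^k$ with $d^k v_h = 0$; by exactness of $(V^\bullet, d^\bullet)$ there is $w \in V^{k-1}$ with $d^{k-1} w = v_h$, and then $\Pi_h^{k-1} w \in V_h^{k-1}$ satisfies $d^{k-1}(\Pi_h^{k-1} w) = \Pi_h^{k}(d^{k-1} w) = \Pi_h^{k} v_h = v_h$, using that $\Pi_h$ commutes with the differential and restricts to the identity on the discrete spaces (it is a projection onto them). Hence the discrete complex is exact, and the associated discrete Poincaré inequality holds with a constant bounded by $c_P \sup_h \norm{\Pi_h^\bullet}$, where $c_P$ is the continuous Poincaré constant; this bound is finite and independent of $h$ precisely because the cochain projections are assumed uniformly $W^i$-bounded.

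With an $h$-uniform discrete Poincaré inequality available, the abstract stability analysis behind Theorems 4.8 and 4.9 of \cite{ArnoldBook}, applied to the exact discrete complex, yields existence and uniqueness of the solution of \eqref{eq:discrete_mixed_weak} and the claimed inf-sup bound for $B$ on $X_h^k \times X_h^k$ with $\gamma > 0$ depending only on $c_P$ and $\sup_h \norm{\Pi_h^\bullet}$. For the a priori estimate I would note that the scheme is conforming ($V_h^i \subset V^i$, same bilinear form $B$), so Galerkin orthogonality $B(\sigma - \sigma_h, u - u_h; \tau_h, v_h) = 0$ holds for all $(\tau_h, v_h) \in X_h^k$; combining this with the continuity of $B$ on $X^k \times X^k$ and the uniform discrete inf-sup condition through the standard Babuška/Céa argument gives $\norm{(\sigma - \sigma_h, u - u_h)}_{X^k} \leq C \inf_{(\tau_h, v_h) \in X_h^k} \norm{(\sigma, u) - (\tau_h, v_h)}_{X^k}$, which is the asserted bound once the product norm is split into its two summands. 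Equivalently one may simply invoke Theorem 5.5 of \cite{ArnoldBook} after the hypotheses have been verified.

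The only genuinely delicate point is the uniformity in $h$: one must make sure that the inf-sup constant, the discrete Poincaré constant and the continuity constant of $B$ do not degenerate under refinement. All of this is governed by the uniform $W^i$-boundedness of the cochain projections $\Pi_h^i$ — which for the present spline construction is exactly what Lemma \ref{lemma:stability_projections_param}, combined with the regularity preservation in Lemma \ref{lemma:regularity_transformation}, provides — so beyond that observation the argument is bookkeeping inside the AFW framework rather than new analysis.
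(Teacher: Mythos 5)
Your proposal is correct and follows essentially the same route as the paper, which states this theorem as a summary of Theorems 4.8, 4.9 and 5.5 and Chapter 5 of the Arnold reference without giving an independent proof. The additional detail you supply — discrete exactness via the commuting projections, the $h$-uniform discrete Poincar\'e inequality from the uniform boundedness of the $\Pi_h^i$, and the Galerkin-orthogonality/inf-sup argument for the quasi-optimal error bound — is exactly the content of the cited AFW results, so there is no divergence in method.
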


Now we use Theorem \ref{theorem:FEEC} for the Hodge-Laplacians with  underlying  Hessian complex and the above derived discretization of it. Namely, it is easy to see that the three important properties stated in latter theorem are satisfied by the discretization ansatz in Section 3. Therefore we apply again the notation of Sections 3 and 5, especially use \eqref{eq:notation_of_the spaces}. %Moreover, we write for the graph norms of the domain complexes $\norm{\cdot}_{V}, \ \norm{\cdot}_{\boldsymbol{V}}$, where we neglect the upper index $k$. For example for $\p{S} \in \p{H}(\Omega,\textup{curl},\mathbb{S})$ we have $\norm{\p{S}}_{\boldsymbol{V}}^2 = \norm{\p{S}}_{\p{L}^2(\Omega)}^2 +  \norm{\nabla \times \p{S}}_{\p{L}^2(\Omega)}^2 = \norm{\p{S}}_{\p{H}(\Omega,\textup{curl})}^2$.
 Besides, below the mappings without index $h$ stand for the exact solutions of the respective mixed weak form; compare \eqref{eq:mixed_weak_form}.

\subsubsection*{Hodge-Laplacian: \ $k=1$}
We have the equation: 
\begin{align*}
\big( \iota_{P_1} \pi_{P_1}  + \nabla \cdot \nabla \cdot \nabla^2    \big) \phi = f,  \ \ \textup{with} \ \ \phi \in H^2({\Omega}),  \nabla^2 \phi \in {\overset{\circ}{\p{H}}(\Omega,\textup{divdiv}, \mathbb{S})}, \ {f} \in \ {L}^2(\Omega),
\end{align*}
and a corresponding discrete mixed weak formulation 
\begin{alignat*}{3}
\textup{Find \ }\ p_h \in P_1(\Omega), \ \phi_h \in V_h^1 \subset H^2({\Omega}) \ \textup{with}\\
\langle p_h, q_h \rangle - \langle \phi_h,  q_h \rangle  &= 0, \ &&\forall q_h  \in P_1(\Omega), \\
\langle  p_h, \tau_h \rangle + \langle \nabla^2 \phi_h,  \nabla^2 \tau_h \rangle  &= \langle f , \tau_h \rangle, \hspace{0.8cm} &&\forall \tau_h \in V_h^{1}.
\end{alignat*} 
Theorem \ref{theorem:FEEC} gives the stability and convergence of the method and an error estimate.
If we assume $1 \leq s \leq p-1$ and $\phi \in H^{s+2}(\Omega)$, then Corollary \ref{corollary} yields
\begin{align*}
\norm{p-p_h}_{L^2(\Omega)} + \norm{\phi-\phi_h}_{V^1} \leq C \, \big(  \underset{q_h \in V^{0}_h}{\inf}\norm{p-q_h}_{L^2(\Omega)} + \underset{\tau_h \in {V}^1_h}{\inf}\norm{\phi-\tau_h}_{H^2(\Omega)} \big) \\
\leq C \ h^{s} \  \norm{\phi}_{H^{s+2}(\Omega)} .
\end{align*}
For the last inequality we used the fact that $ P_1(\Omega) \subset V_h^0 $ and note $\norm{\phi}_{V^1}^2 \coloneqq \norm{\phi}^2_{L^2(\Omega)}+\norm{\nabla^2\phi}^2_{\p{L}^2(\Omega)} $.

\subsubsection*{Hodge-Laplacian: \ $k=2$}
The level $k=2$ Hodge-Laplacian can be written as
\begin{align*}
%\label{eq:hodge_Laplacian_k_2}
\big( \textup{sym}\nabla \times \nabla \times   + \nabla^2 \ \nabla \cdot \nabla \cdot  \, \big) \p{S} = \p{f}, \ \ \p{S} \in \p{H}({\Omega},\textup{curl}, \mathbb{S}), \ \ \p{f} \in \p{L}^2(\Omega,\mathbb{S}),
\end{align*}
where one searches for $\p{S} \in \p{H}({\Omega},\textup{curl}, \mathbb{S}) \cap{\overset{\circ}{\p{H}}(\Omega,\textup{divdiv}, \mathbb{S})} $ with $\nabla \times \p{S} \in {\overset{\circ}{\p{H}}(\Omega,\textup{symcurl}, \mathbb{T})}, \ $ $ \nabla \cdot\nabla \cdot \p{S} \in H^2(\Omega)$.
Thus the discrete mixed weak formulation in view  of \eqref{eq:discrete_mixed_weak}   is:
\begin{alignat*}{3}
\textup{Find \ }\ \phi_h \in V_h^{1} \subset H^2(\Omega), \ \p{S}_h \in \boldsymbol{V}_h^2 \subset \p{H}({\Omega},\textup{curl}, \mathbb{S}) \ \textup{with}\\
\langle \phi_h, \tau_h \rangle - \langle \p{S}_h, \nabla^2 \tau_h \rangle  &= 0, \ &&\forall \tau_h  \in V_h^1, \\
\langle \nabla^2 \phi_h, \p{M}_h \rangle + \langle \nabla \times \p{S}_h,  \nabla \times \p{M}_h \rangle  &= \langle \p{f} , \p{M}_h \rangle, \hspace{0.8cm} &&\forall \p{M}_h \in \boldsymbol{V}_h^{2}.
\end{alignat*} 
 Using again Theorem   \ref{theorem:FEEC} and  Corollary \ref{corollary}
  we obtain  stability and convergence with estimate
\begin{align*}
\norm{\phi-\phi_h}_{V^1} + \norm{\p{S}-\p{S}_h}_{\p{H}(\Omega,\textup{curl})} \leq C \, \big(  \underset{\tau_h \in V^{1}_h}{\inf}\norm{\tau_h-\phi}_{H^2(\Omega)} + \underset{\p{M}_h \in \boldsymbol{V}^2_h}{\inf}\norm{\p{S}-\p{M}_h}_{\p{H}(\Omega,\textup{curl})} \big) \\
\leq C \ h^{s} \  \big(\norm{\phi}_{H^{s+2}(\Omega)} +\norm{\p{S}}_{\p{H}^s(\Omega,\textup{curl})} \big).
\end{align*}
Above we assumed $1 \leq s \leq p-1$ and $\phi \in H^{s+2}(\Omega) \ \textup{as well as}  \  \p{S}, \nabla \times \p{S} \in \p{H}^s(\Omega)$.

\subsubsection*{Hodge-Laplacian: \ $k=3$}
\label{sec:k3}
In this case  equation \eqref{eq:Hodge_Lplaciab_level_k} has the  form
\begin{align*}
\big( -\textup{dev}\nabla \nabla \cdot   + \nabla \times ( \textup{sym}\nabla \times  )  \, \big) \p{T} = \p{f} , \ \ \p{f} \in \p{L}^2(\Omega,\mathbb{T}).
\end{align*}
And the strong form seeks $\p{T} \in \p{H}({\Omega},\textup{div}, \mathbb{T}) \cap {\overset{\circ}{\p{H}}(\Omega,\textup{symcurl}, \mathbb{T})}$ such that $\textup{sym}\nabla \times \p{T} \in {\p{H}(\Omega,\textup{curl}, \mathbb{S})}, \ \ \nabla \cdot \p{T} \in \p{H}_0^1(\Omega)$. The finite-dimensional problem based on the mixed weak form reads:
\begin{alignat*}{3}
\textup{Find \ }\ \p{S}_h \in \boldsymbol{V}_h^{2} \subset \p{H}({\Omega},\textup{curl}, \mathbb{S}), \ \p{T}_h \in \boldsymbol{V}_h^3 \subset \p{H}({\Omega},\textup{div}, \mathbb{T}) \ \textup{with}\\
\langle \p{S}_h, \p{M}_h \rangle - \langle \p{T}_h, \nabla \times \p{M}_h \rangle  &= 0, \ &&\forall \p{M}_h  \in \boldsymbol{V}_h^2, \\
\langle  \nabla \times \p{S}_h , \p{N}_h \rangle + \langle \nabla \cdot \p{T}_h,  \nabla \cdot \p{N}_h \rangle  &= \langle \p{f} , \p{N}_h \rangle, \hspace{0.8cm} &&\forall \p{N}_h \in \boldsymbol{V}_h^{3}.
\end{alignat*} 
Again if we require $1 \leq s \leq p-1$ and $\p{S}, \nabla \times \p{S} \in \p{H}^s(\Omega)$ and $ \ \p{T}, \nabla \cdot \p{T} \in  \p{H}^s(\Omega)$, then we get with Corollary \ref{corollary}:
\begin{align*}
\norm{\p{S}-\p{S}_h}_{\p{H}(\Omega,\textup{curl})} &+ \norm{\p{T}-\p{T}_h}_{\p{H}(\Omega,\textup{div})}\\ &\leq C \, \big(  \underset{\p{M}_h \in \boldsymbol{V}^{2}_h}{\inf}\norm{\p{M}_h-\p{S}}_{\p{H}(\Omega,\textup{curl})} + \underset{\p{N}_h \in \boldsymbol{V}^3_h}{\inf}\norm{\p{T}-\p{N}_h}_{\p{H}(\Omega,\textup{div})} \big) \\
&\leq C \ h^{s} \  \big(\norm{\p{S}}_{\p{H}^s(\Omega,\textup{curl})} +\norm{\p{T}}_{\p{H}^s(\Omega,\textup{div})} \big),
\end{align*}
where $\p{T}$ and $\p{S}$ are the exact solution of the mixed weak form of the  Hodge-Laplacian.

\subsubsection*{Hodge-Laplacian: \ $k=4$}
In view of $d^4 = 0$ we can write down the Hodge-Laplacian of level $4$ as
\begin{align*}
-\big( \nabla \cdot \textup{dev}\nabla      \, \big) \p{v} = \p{f}, \ \ \p{v} \in \p{L}^2({\Omega}), \ \ \p{f} \in \p{L}^2(\Omega),
\end{align*}
and we look for $\p{v} \in \p{H}_0^1(\Omega)$ with $ \textup{dev}\nabla \p{v} \in \p{H}(\Omega,\textup{div},\mathbb{T})$.
Now using the template \eqref{eq:discrete_mixed_weak} we obtain a finite-dimensional weak form:
\begin{alignat*}{3}
\textup{Find \ }\  \ \p{T}_h \in \boldsymbol{V}_h^3 \in \p{H}({\Omega},\textup{div}, \mathbb{T}), \  \p{v}_h \in \boldsymbol{V}_h^4 \subset \p{L}^2({\Omega}) \ \textup{with}\\
\langle \p{T}_h, \p{N}_h \rangle - \langle \p{v}_h, \nabla \cdot \p{N}_h \rangle  &= 0, \ &&\forall \p{N}_h  \in \boldsymbol{V}_h^3, \\
\langle  \nabla \cdot \p{T}_h , \p{w}_h \rangle  &= \langle \p{f} , \p{w}_h \rangle, \hspace{0.8cm} &&\forall \p{w}_h \in \boldsymbol{V}_h^{4}.
\end{alignat*} 
Applying Theorems \ref{theorem:FEEC} and Corollary \ref{corollary} yields, with
$1 \leq s \leq p-1$ and enough regularity, the inequality
\begin{align*}
\norm{\p{T}-\p{T}_h}_{\p{H}(\Omega,\textup{div})} + \norm{\p{v}-\p{v}_h}_{\p{L}^2(\Omega)} \leq C \, \big(  \underset{\p{N}_h \in \boldsymbol{V}^{3}_h}{\inf}\norm{\p{N}_h-\p{T}}_{\p{H}(\Omega,\textup{div})} + \underset{\p{w}_h \in \boldsymbol{V}^4_h}{\inf}\norm{\p{v}-\p{w}_h}_{\p{L}^2(\Omega)} \big) \\
\leq C \ h^{s} \  \big(\norm{\p{T}}_{\p{H}^s(\Omega,\textup{div})} +\norm{\p{v}}_{\p{H}^s(\Omega)} \big),
\end{align*}
where $\p{T}$ and $\p{v}$ are the  solution of the corresponding variational form \eqref{eq:mixed_weak_form}.\\

Another application for our structure-preserving discretization would be the Linearized Einstein-Bianchi (LEBS) system that is utilized to compute solutions to the linearized Einstein Field Equations in numerical relativity. For the next explanations we follow \cite{QuennevilleBlair2015ANA}.
\subsection{Linear Einstein-Bianchi system}
Using a suitable auxiliary variable and requiring consistent initial data the LEBS can be written as
\begin{align*}
\partial_t \sigma &= \nabla \cdot \nabla \cdot \p{E}, \\
\partial_t \p{E} &= -\nabla^2 \sigma - \textup{sym} \nabla \times \p{B}, \\
\partial_t \p{B} & = \nabla \times \p{E} , 
\end{align*}
with \begin{align*}
\sigma \in C^0([0,T];H^2(\Omega)) \cap C^1([0,T];L^2(\Omega)), \\ \p{E}\in C^0([0,T];\p{H}({\Omega},\textup{curl}, \mathbb{S}) \cap \overset{\circ}{\p{H}}(\Omega,\textup{divdiv}, \mathbb{S})) \cap C^1([0,T];\p{L}^2(\Omega)), \\ 
\p{B} \in C^0([0,T];\overset{\circ}{\p{H}}({\Omega},\textup{symcurl}, \mathbb{T}))  \cap C^1([0,T];\p{L}^2(\Omega)).
\end{align*}
 Above $0<T< \infty $ stands for some final time.

The spatial part comprises a Hodge-Laplacian and actually the upper equation belongs to the class of Hodge wave equations, the hyperbolic time-dependent version of  the Hodge-Laplacian; we refer to Chapter 8 in \cite{ArnoldBook}. In his thesis Quenneville-B\'elair \cite{QuennevilleBlair2015ANA} proofs an approximation estimate for a weak discrete formulation. More precisely, due to  Theorem 4.14 in \cite{QuennevilleBlair2015ANA}, if one has a structure-preserving spatial discretization in the sense of Theorem \ref{theorem:FEEC} of the Hessian complex (like the one we introduced in this article), then the solution of the problem 
\begin{align*}
\textup{Find \ }\ \sigma_h \in C^1([0,T]; V_h^{1}) \subset C^1([0,T];H^2(\Omega)), \hspace{5cm}\\ \ \p{E}_h \in C^1([0,T];\boldsymbol{V}_h^2) \subset C^1([0,T];\p{H}({\Omega},\textup{curl}, \mathbb{S})), \hspace{4cm} \\  \p{B}_h \in C^1([0,T];\boldsymbol{V}_h^3) \subset C^1([0,T];\p{H}({\Omega},\textup{div}, \mathbb{T})), \ \hspace{2cm} \textup{where} \hspace{1cm}
\end{align*}
\begin{alignat*}{3}
\langle \partial_t \sigma_h, \tau_h \rangle - \langle \p{E}_h, \nabla^2 \tau_h \rangle  &= 0, \ &&\forall \tau_h  \in V_h^1, \\
\langle \partial_t \p{E}_h, \p{M}_h \rangle +\langle \nabla^2 \sigma_h, \p{M}_h \rangle + \langle  \p{B}_h,  \nabla \times \p{M}_h \rangle  &= 0, \hspace{0.8cm} &&\forall \p{M}_h \in \boldsymbol{V}_h^{2},\\
\langle \partial_t \p{B}_h, \p{N}_h \rangle  - \langle \nabla \times \p{E}_h,  \p{N}_h \rangle  &= 0,
 \hspace{0.8cm} &&\forall \p{N}_h \in \boldsymbol{V}_h^{3}.
\end{alignat*} 
converges to the exact solution of the LEBS. And since there exists  a unique exact solution, we have a well-defined problem; compare \cite[Theorem 4.6., Theorem 4.14.]{QuennevilleBlair2015ANA}. One can proof that with proper initial data $(\sigma^0,\p{E}^0,\p{B}^0) \in V^1 \times (\p{V}^2 \cap \p{V}_2^*) \times \p{V}_3^*$ the difference between the exact  and the approximate solution satisfies
\begin{align*}
 \underset{t \in [0,T]}{\sup} \big(\norm{\sigma-\sigma_h}_{L^2(\Omega)} &+ \norm{\p{E}-\p{E}_h}_{\p{L}^2(\Omega)} + \norm{\p{B}-\p{B}_h}_{\p{L}^2(\Omega)}\big) \\ &\leq C \, \bigg(  \underset{t \in [0,T]}{\sup} \big(\norm{\sigma-\Pi_h^1\sigma}_{L^2(\Omega)} + \norm{\p{E}-\Pi_h^2\p{E}}_{\p{L}^2(\Omega)} + \norm{\p{B}-\Pi_h^3\p{B}}_{\p{L}^2(\Omega)}\big) \  \bigg) \\
 & \ \ + C \,  \int_{0}^{T} \norm{\dot{\sigma}-\Pi_h^1\dot{\sigma}}_{V^1} + \norm{\dot{\p{E}}-\Pi_h^2\dot{\p{E}}}_{\boldsymbol{V}^2} + \norm{\dot{\p{B}}-\Pi_h^3\dot{\p{B}}}_{\boldsymbol{V}^3} \ dt\\& \in  \mathcal{O}(h^{s}) ,
\end{align*}
assuming $1 \leq s \leq p-1$ and enough regularity of the exact solution \footnote{We have to assume that the exact solution variables are  elements of $C^1([0,T];V^1), \ C^1([0,T];\p{V}^i)$ respectively. }. 
In the second last line the dots denote the time derivative.

 We want to remark that in \cite{QuennevilleBlair2015ANA}  also other formulations and methods  are established and considered. Main reason for this is to simplify the discretization procedure, since the $H^2$-regularity is quite hard to achieve in the context of classical FEM.\\

We proceed now with the consideration of some numerical test problems to check the theoretical findings from above.

\section{Numerical examples}
In this part we want to display some numerical examples which consist of the computation of approximate solutions to the Hodge-Laplacians to verify the convergence statements.
Basis of the calculations is the GeoPDEs package (\cite{geopdes,geopdes3.0}) together with the MATLAB software; see \cite{MATLAB:2019}. The appearing linear systems are mainly solved by means of the mldivide function of MATLAB. Nevertheless for some very large systems we utilized the \emph{minres}-solver, where a  large number of iterations ($40000$) and small residuals, compared to the computed actual errors, ensure meaningful results.\\
As computational domain we consider a deformed cube obtained by the parametrization map $$\p{F} \colon (0,1)^3 \rightarrow \mathbb{R}^3 \ , \  \boldsymbol{\zeta} \mapsto A \cdot \boldsymbol{\zeta}, \  \ \ A \coloneqq \begin{pmatrix}
1 & 0.5 & 0.5 \\ 0 & 1 & 0.5 \\ 0.5 & 0 & 1
\end{pmatrix}.$$
For our experiments we divide the parametric reference cube into equal smaller cubes with edge lengths $h$ to generate regular meshes in the physical domain. In Fig.  \ref{fig:mesh} the mesh  with  size $h = 1/4$ is displayed.

Now we go through the cases of $k = 1,  \dots, 4$ and compute numerical solutions for the Hodge-Laplacian by means of the discrete mixed weak formulation \eqref{eq:discrete_mixed_weak}. In all test examples we use right-hand sides such that the exact solutions are smooth. Hence looking at Theorem \ref{theorem:FEEC} and Corollary \ref{corollary}, we expect for the different graph norms $\norm{\cdot}_{V^k}$,  $\norm{\cdot}_{\boldsymbol{V}^k}$ a convergence order one lower than the chosen degree $p$ for the space $V_h^1$. The polynomial degree $p$ in $V_h^1$ is chosen to be equal w.r.t. each parametric coordinate, i.e. $p_i=p$, and the splines have the regularities $r_i = r \coloneqq  p-1, \ \forall i$. Latter is done in order to save computational costs.
\subsubsection*{Hodge-Laplacian: \ $k=1$}
The right-hand side $f$ is adapted in such a way  that $\phi \circ \p{F} = \sin(\pi \ \zeta_1)^4 \ \sin(\pi \ \zeta_2)^4 \ \sin(\pi \ \zeta_3)^4$ is the exact solution. Obviously, we get $\phi \in D(L^1)$. 
If we compute the errors between numerical and exact solution for polynomial degrees $p = 2,3,4$ we obtain the results displayed in Fig. \ref{fig:conv_h2}.  The convergence behavior in the mentioned figure Fig. \ref{fig:conv_h2} confirms the theoretical assertion.

\begin{figure}[h!]
	\begin{minipage}{7cm}
		\includegraphics[width=7cm,height=5cm]{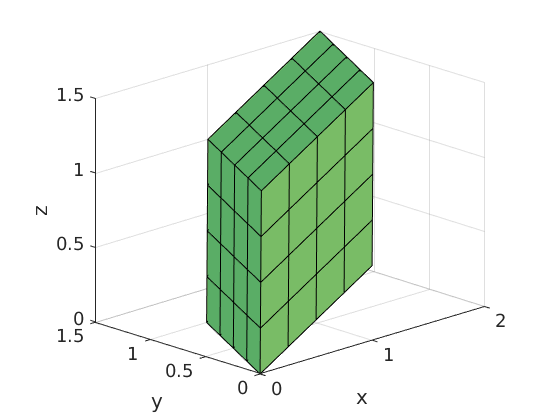}
		\caption{\small Physical domain we use for all test examples.}
			\label{fig:mesh}
	\end{minipage}
	\hspace{0.8cm}
	\begin{minipage}{7cm}
		\includegraphics[width=7cm,height=5cm]{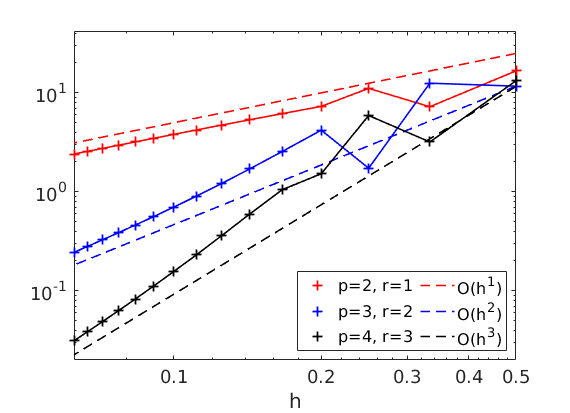}
		\caption{The errors $\norm{\phi-\phi_h}_{H^2}$ for the level $k=1$ Hodge-Laplacian.}
		\label{fig:conv_h2}
	\end{minipage}
\end{figure}

\subsubsection*{Hodge-Laplacian: \ $k=2$}
Here we set the source term $\p{f}$ s.t. the  exact solution has the form $$\p{S} = \textup{MAT}(v,v,\dots,v), \ \ v \coloneqq (\sin(\pi \ \zeta_1)^2 \ \sin(\pi \ \zeta_2)^2 \ \sin(\pi \ \zeta_3)^2) \circ \p{F}^{-1}(x,y,z).$$ One notes the  small deviation between  the predicted convergence order $1$ for $p=2$ and the plot in Fig. \ref{fig:conv_k1_hcurl}, where the errors in the norm $\norm{\cdot}_{\p{H}(\Omega,\textup{curl})}$ are plotted. Nevertheless,  there is no contradiction to Theorem \ref{theorem:FEEC}. And in Fig. \ref{fig:conv_k1_h2} we also see the errors $\norm{d_2^*\p{S}-\phi_h}_{H^2}$ which decay steadily in good accordance with the theory.

\begin{figure}[h!]
	\begin{minipage}{7cm}
		\includegraphics[width=7cm,height=5cm]{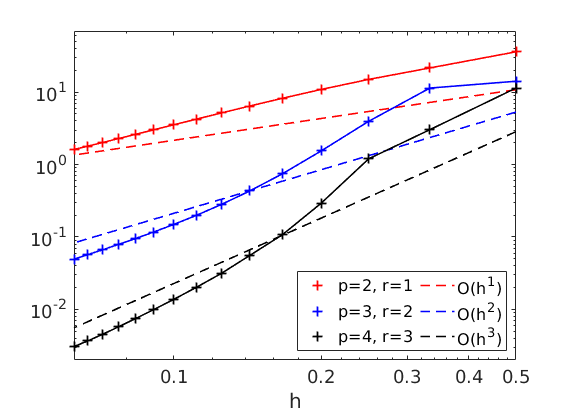}
		\caption{\small $H(\Omega,\textup{curl})$-error for the level $k=2$.}
		\label{fig:conv_k1_hcurl}
	\end{minipage}
	\hspace{0.8cm}
	\begin{minipage}{7cm}
		\includegraphics[width=7cm,height=5cm]{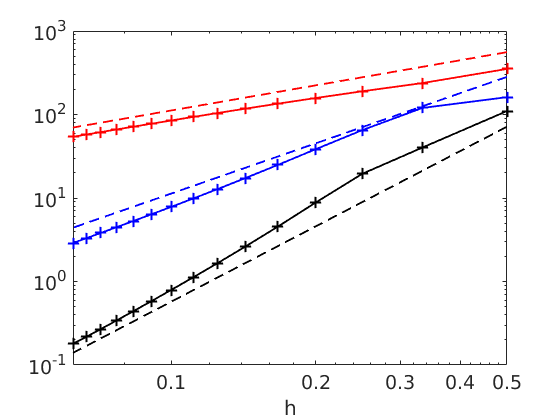}
		\caption{ \small The errors $\norm{d_2^*\p{S}-\phi_h}_{H^2}$ ($k=2$).}
		\label{fig:conv_k1_h2}
	\end{minipage}
\end{figure}

\subsubsection*{Hodge-Laplacian: \ $k=3$}
As above we use a manufactured exact solution to study the convergence behavior. \\ We have as exact solution $$\p{T}= \begin{pmatrix}
v & v & v \\ 0 & v & 0 \\ 0 & 0 & -2 \, v
\end{pmatrix}.  $$ In view of Theorem \ref{theorem:FEEC} and Section \ref{sec:k3}, we compute the errors $\norm{\p{T}-\p{T}_h}_{\p{H}(\Omega,\textup{div})}$ and \\ $\norm{\p{S}-\p{S}_h}_{\p{H}(\Omega,\textup{curl})}$, with $\p{S}= \textup{sym}\nabla \times \p{T}$. One can see the results in Fig. \ref{fig:conv_k2_hdiv} and Fig. \ref{fig:conv_k2_hcurl}.   
\begin{figure}[h!]
	\begin{minipage}{7cm}
		\includegraphics[width=7cm,height=5cm]{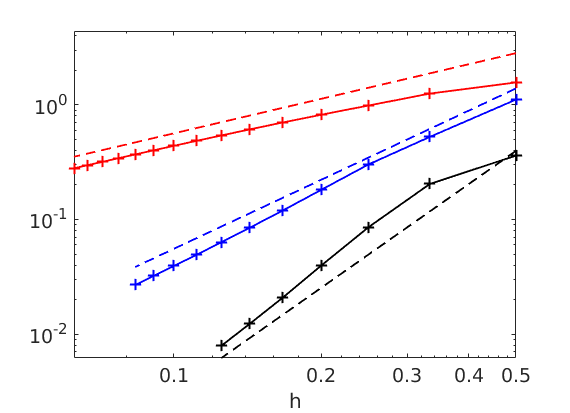}
		\caption{\small $H(\Omega,\textup{div})$-error for  level $k=3$.}
		\label{fig:conv_k2_hdiv}
	\end{minipage}
	\hspace{0.8cm}
	\begin{minipage}{7cm}
		\includegraphics[width=7cm,height=5cm]{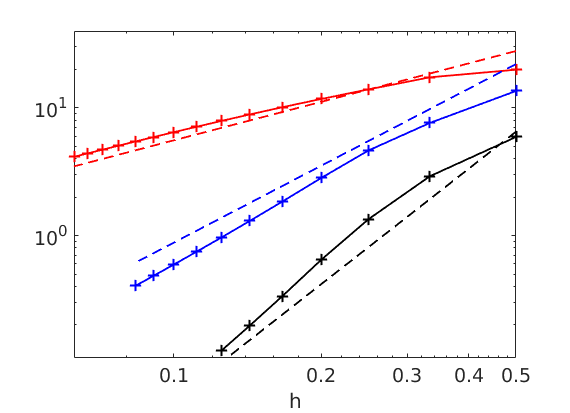}
		\caption{ \small The errors $\norm{d_3^*\p{T}-\p{S}_h}_{H(\text{curl})}$.}
		\label{fig:conv_k2_hcurl}
	\end{minipage}
\end{figure}

\subsubsection*{Hodge-Laplacian: \ $k=4$}
For our last level in the context of the Hodge-Laplacian, we use as test case the exact solution
$\p{v} = (v,v,v)$. The  errors $\norm{\p{v}-\p{v}_h}_{\p{L}^2(\Omega)}$ and $\norm{\p{T}-\p{T}_h}_{\p{H}(\Omega,\textup{div})}$ between exact and numerical solution are shown in the figures Fig. \ref{fig:conv_k3_L2} and Fig.\ref{fig:conv_k3_hdiv}. One notes the relation $\p{T} = -\textup{dev}\nabla\p{v}$.

\begin{figure}[h!]
	\begin{minipage}{7cm}
		\includegraphics[width=7cm,height=5cm]{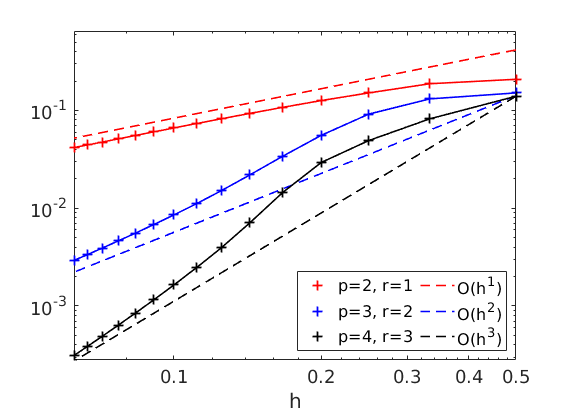}
		\caption{\small $L^2$-error for the level $k=4$.}
		\label{fig:conv_k3_L2}
	\end{minipage}
	\hspace{0.8cm}
	\begin{minipage}{7cm}
		\includegraphics[width=7cm,height=5cm]{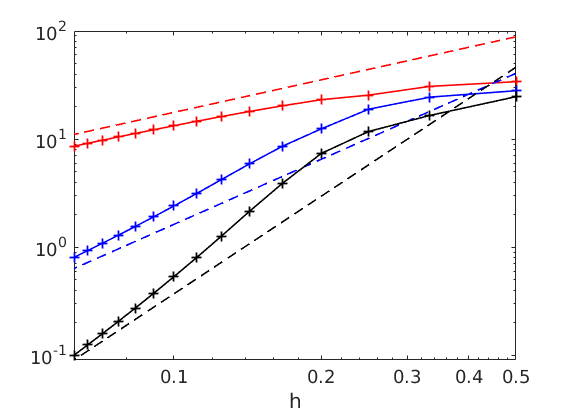}
		\caption{ \small The $H(\Omega,\textup{div})$-errors for   $k=4$.}
		\label{fig:conv_k3_hdiv}
	\end{minipage}
\end{figure}

In another small example we want to point out the superiority of the structure-preserving ansatz in contrast to straight-forward IGA discretizations. For this purpose we consider again the Hodge-Laplacian test cases for $k=2$ and $k=3$ from above. But now we use for each occurring component function  the standard IGA test spaces $V^1_h$ and ignore the other constructed finite-dimensional spaces.  The results for the errors and different mesh sizes are shown in the figures Fig. \ref{fig:conv_k1_nons_h2}-\ref{fig:conv_k2_nons_hdiv}. One sees the non-stable decay behavior for the errors and we interpret it as a clue for possible instability problems if one devotes oneself with  classical test function spaces.  In particular we can not guarantee that the error decays match with the theoretically predicted rates. For example in Fig. \ref{fig:conv_k2_nons_hdiv}, we observe partly an error increase despite mesh refinement.

\begin{figure}[h!]
	\begin{minipage}{7cm}
		\includegraphics[width=7cm,height=5cm]{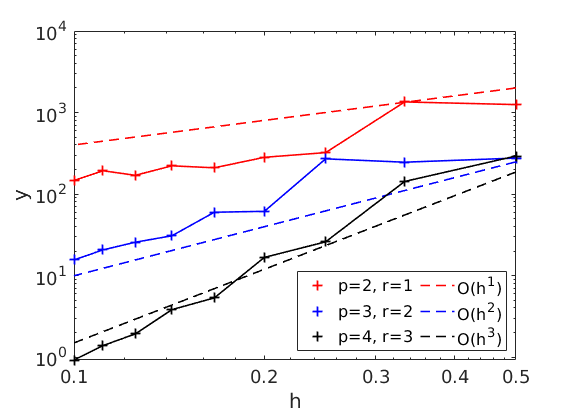}
		\caption{\small $H^2$-error for $k=2$ and a non-structure-preserving discretization.}
		\label{fig:conv_k1_nons_h2}
	\end{minipage}
	\hspace{0.8cm}
	\begin{minipage}{7cm}
		\includegraphics[width=7cm,height=5cm]{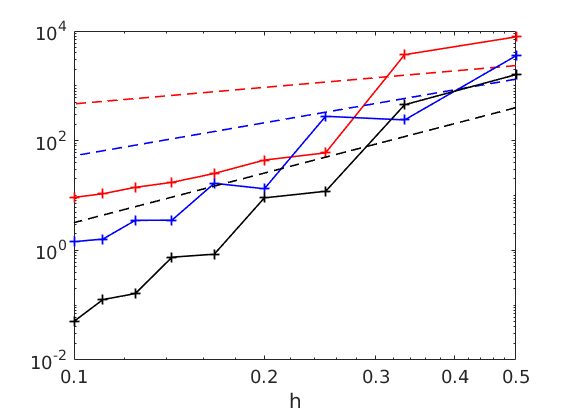}
		\caption{ \small $H(\Omega,\textup{curl})$-error for $k=2$ and a non-structure-preserving discretization.}
		\label{fig:conv_k1_nons_hcurl}
	\end{minipage}
\end{figure}

\begin{figure}[h!]
	\begin{minipage}{7cm}
		\includegraphics[width=7cm,height=5cm]{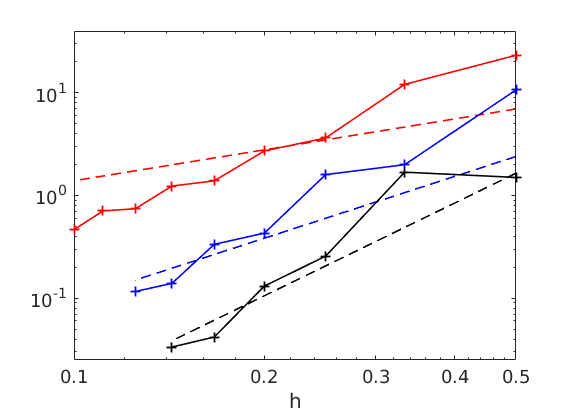}
		\caption{\small $H(\Omega,\textup{curl})$-error for $k=3$ and a non-structure-preserving discretization.}
		\label{fig:conv_k2_nons_hcurl}
	\end{minipage}
	\hspace{0.8cm}
	\begin{minipage}{7cm}
		\includegraphics[width=7cm,height=5cm]{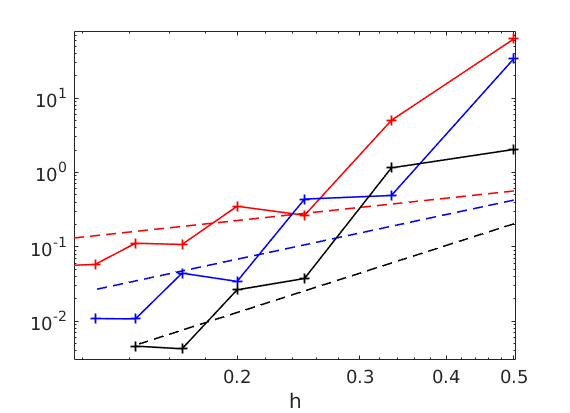}
		\caption{ \small $H(\Omega,\textup{div})$-error for $k=3$ and a non-structure-preserving discretization.}
		\label{fig:conv_k2_nons_hdiv}
	\end{minipage}
\end{figure}
So we can conclude for this section that the test examples confirm the statements in Theorem \ref{theorem:FEEC}, the structure-preserving property of our  discretization respectively.

\section{Discussion and further problems}
We introduced  spline spaces which are suitable for a discretization of the Hessian complex, since they mimic the complex structure. In other words, they can be used to built up a sub-complex. Then the theory of FEEC shows us how to write down mixed-weak formulations of the Hodge-Laplacian problem that guarantee stability and convergence. Further, the discretization can also be used in other fields like numerical relativity. And here we want to remark, that the symmetry and trace properties as parts of the original Hessian complex are preserved with our method exactly. But, there are also questions  and new problems arising. On the one hand, we could only show the  meaningfulness of the transformation mappings $\mathcal{Y}_i$ in case of affine linear parametrizations of the physical domain. Clearly, it is a reasonable thought to check for possible generalizations to exploit the benefit of IGA, namely the exact representation of curved boundary domains. The authors have already addressed the issue of curved boundary geometries and although one can show the existence of generalized structure-preserving transformations for non-trivial geometries we have to study the problem in more detail to obtain practicable outcomes. On the other hand, as a task for further studies it would be natural to consider other second order complexes in a similar fashion, e.g. the \textup{divdiv}-complex. Furthermore, there exists also a Hessian complex involving Dirichlet boundary conditions. Hence the construction of proper spline spaces satisfying the boundary conditions is also an interesting problem. 

\newpage

\nocite{*}
\bibliographystyle{siam}
\bibliography{Literatur}
\end{document}